\numberwithin{equation}{section}
\numberwithin{defn}{section}
\numberwithin{thm}{section}
\numberwithin{lem}{section}
\newcommand{\ds}{\text{d}s}
\newcommand{\IN}{{\mathbb{N}}}
\newcommand{\B}{\ensuremath{\mathcal{B}}}
\newcommand{\id}{\text{id}}
\newcommand{\ind}{\mathbf{1}}
\newcommand{\F}{\ensuremath{\mathcal{F}}}
\newcommand{\G}{\ensuremath{\mathcal{G}}}
\newcommand{\Hh}{\ensuremath{\mathcal{H}}}
\newcommand{\FF}{\mathbb{F}}
\newcommand{\GG}{\mathbb{G}}
\newcommand{\R}{\mathbb{R}}
\newcommand{\Ii}{\ensuremath{\mathcal{I}}}
\newcommand{\A}{\ensuremath{\mathcal{A}}}
\newcommand{\DD}{\ensuremath{\mathbb{D}}}
\newtheorem{assumptions}{Assumptions}
\newtheorem{definition}{Definition}
\numberwithin{definition}{section}
\newtheorem{theorem}{Theorem}
\newtheorem{lemma}[theorem]{Lemma}
\newtheorem{corollary}[theorem]{Corollary}
\newtheorem{remark}[theorem]{Remark}
\numberwithin{theorem}{section}
\newcommand{\Ff}{\mathcal{F}}
\newcommand{\Gg}{\mathcal{G}}
\newcommand{\Ss}{\mathcal{S}}
\newcommand{\Ll}{\mathcal{L}}
\newcommand{\Yy}{\mathcal{Y}}
\newcommand{\PF}{\mathcal{P}_{\mathbb{F}}}
\newcommand{\PG}{\mathcal{P}_{\mathbb{G}}}
\title{A Maximum Principle for Mean-Field SDEs with time change}
\author{Giulia Di Nunno\thanks{Department of Mathematics,
University of Oslo, P.O. Box 1053 Blindern, N-0316 Oslo, and Department of Business and Management Science, NHH, Helleveien 30, N-5045 Bergen.
Email: giulian@math.uio.no}  \and
Hannes Haferkorn\thanks{Department of Mathematics,
University of Oslo, P.O. Box 1053 Blindern, N-0316 Oslo. Email: hanneshh@math.uio.no}
}
\date{July 10, 2016}
\begin{document}
\maketitle

\begin{abstract}
Time change is a powerful technique for generating noises and providing flexible models.
In the framework of time changed Brownian and Poisson random measures we study the existence and uniqueness of a solution to a general mean-field stochastic differential equation.  
We consider a mean-field stochastic control problem for mean-field controlled dynamics and we present a necessary and a sufficient maximum principle. For this we study existence and uniqueness of solutions to mean-field backward stochastic differential equations in the context of time change. 
An example of a centralised control in an economy with specialised sectors is provided.

\vspace{2mm}
{\it Keywords:} time change, martingale random fields, mean-field SDE, mean-field BSDEs, mean-field stochastic optimal control

{\it MS classification:} 60G60, 60H10, 93E20, 91G80
\end{abstract}

\section{Introduction}
The modelling of the interactions and the equilibrium of a large number of agents is an issue in several  fields, e.g. in statistical mechanics with the kinetic theory for gases, in quantum mechanics or chemistry. Equilibria of a large number of agents also naturally appear in biology, in neural networks, and in some economic issues as e.g. systemic risk, commodity markets, and energy related issues. The agents, whatever representing, are assumed symmetric, having similarly shaped dynamics, interacting with the whole population without privileged connections.

The mean-field approach consists of approximating the large number or agents $N$ with a continuum of them $N \longrightarrow \infty$. As clearly presented in e.g. \cite{CDL}, there are two ways to consider such approximation corresponding to different forms of equilibrium.
If the single agents are deciding upon their own individual optimal strategies, then the framework corresponds to a Nash type asymptotic equilibrium. This leads to mean-field games, see e.g. \cite{LL}, \cite{HMC}.
On the other hand another situation is when the decision on the optimal strategy is taken in "centralised form" on the asymptotic common behaviour, which corresponds to a controlled mean-field stochastic differential equation (SDE) and the optimisation problem refers to this dynamics. In this case we have a control problem of a mean-field SDE. See e.g. \cite{AD}, \cite{CD2}. 
The two approaches sketched above are not conceptually equivalent though under some specific conditions the solutions may coincide, see the analysis and examples in \cite{CDL}.
For an overview see e.g. \cite{BFY} and references therein.

\vspace{2mm}
This paper deals with the stochastic control of a mean-field SDE. Our contribution consists in the study of dynamics that are driven by a martingale random field and hence a more general framework than the one considered so far in the literature.
To give a uniform presentation we focus on martingale random fields generated by time changed Brownian and Poisson random fields. However we stress that the first part of the paper, dealing with the existence of solutions of a mean-field SDE, is valid for a general martingale random field with conditionally independent values as defined in \cite{DiE}, see also \cite{walsh}.
The reason for choosing these time changed driving noises comes from the balance between the relative easiness in generating noises in this way and the flexibility of this class of models from the point of view of applications. Classical examples taken from the mathematical finance literature range from the modelling of stochastic volatility to the modelling of abrupt movements in default and more generally in credit risk. In general time changed noises provide the flexibility to cover naturally the modelling of many stochastic phenomena where inhomogeneous behaviour and erratic jump movements are detected. From a mathematical perspective we relate the time changed noises in the representation as doubly stochastic noises as defined here below. We stress that the time changed applied is not necessarily a subordinator, which means that the framework suggested goes well beyond the L\'evy structures.

The specificity of the use of time changed Brownian and Poisson random measures comes in when considering the actual mean-field control problem. In this case, in fact we deal with mean-field backward stochastic differential equations (BSDEs), the solution of which relies on a stochastic integral representation theorem involving the integral with respect to the driving measure only.
The existence of such representation theorems depends on the noise and the information flow fixed on the probability space. It is well known that we can obtain these results for mixtures of Gaussian and Poisson type measures and in \cite{DiS} it is proved for time changed Brownian and time changed Poisson random measures. See also \cite{DiS2} for a specific study on the structure of the doubly stochastic Poisson random noises.

\vspace{2mm}
To summarise in the framework of time change noises, in the sequel we study the solution of a general mean-field SDE in which the coefficients depend not only on the state of the system, but on the distribution of such state. Here we generalise the work of \cite{JMW}, which deals with the L\'evy case. 
Restricting the dynamics and the performance functional to depend on functionals of the distribution of the system, we study a mean-field stochastic control problem by the maximum principle approach.
The mean-field control problems are typically time inconsistent and the approach by maximum principle is a good response to tackle such control problems. 
For this we solve the adjoint equations, studying the mean-field BSDEs driven by time changed noises. In this we extend the work of \cite{BLP}. The mean-field stochastic control problem considered were first studied by \cite{AD} in the Brownian context.
Another way to study maximum principle can be done by the use of Malliavin calculus exploiting the duality between Malliavin derivative and Skorohod integral. For this an adequate extension of the Malliavin calculus needs to be applied. This goes beyond the scopes of the present paper and it is topic of other research.

As illustration of our results we study a centralised control problem in an economy with specialised sectors.

%%%%%%%%%%%
%%%%%%%%%%%
\section{Framework}
Let $(\Omega, \F, P)$ be a complete probability space and $T>0$. 
Let $\lambda:=(\lambda^B,\lambda^H)\in L^1([0,T]\times\Omega;\R_+^2)$ be a two dimensional stochastic process with nonnegative components which are continuous in probability. 
Let $\nu$ be a $\sigma$-finite measure on $\R_0:=\R \setminus \{0\}$ satisfying $\int_{\R_0}z^2\nu(dz)<\infty$. 
Define the random measure $\Lambda$ on $\B([0,T]\times\R)$ as
\begin{align}\label{DefLambda}
 \Lambda(\Delta):=\int\limits_0^T\ind_\Delta(t,0)\lambda^B_t+\int\limits_{\R_0}\ind_\Delta(t,z)\lambda^H_t\nu(dz)dt,\quad \Delta\in\B([0,T]\times\R)
\end{align}
and let the $\sigma$-algebra $\F^\Lambda$ be generated by the values of $\Lambda$ on $[0,T]$. 

\vspace{2mm}
The driving noise for the dynamics we are studying later on is given by the martingale random field $\mu$ on $\B([0,T]\times\R)$ defined by the mixture
$$
\mu(\Delta) := \mu^G ( \Delta \cap [0,T]\times \{0\}) + \mu^P(\Delta \cap [0,T]\times\R_0)
$$
of a doubly stochastic Gaussian random field $\mu^G$ on $[0,T]\times \{0\} \sim [0,T]$ and a doubly stochastic centred Poisson random measure $\mu^P$ on $[0,T]\times \R_0$, such that $\mu^G$ and $\mu^P$ are conditionally independent given $\F^\Lambda$. 
This yields, 
\begin{align}\begin{split}
              &E[\mu(\Delta)|\F^\Lambda]=0,\quad E[\mu(\Delta)^2|\F^\Lambda]=\Lambda(\Delta) \\
              &E[\mu(\Delta_1)\mu(\Delta_2)|\F^\Lambda]=0\text{  for }\Delta_1\text, \Delta_2\text{ disjoint.}
             \end{split}
\end{align}
See e.g. \cite{DiS} for details.
The doubly stochastic noises are set in relationship with time change by the characterisation \cite[Theorem 3.1]{Serfozo} (see also \cite{Grigelionis}). In view of this result $\mu^G$ has the same distribution of a time changed Brownian motion and, for any $B \in \B(\R_0)$, the process $\mu^P([0,\cdot]\times B)$ has the same distribution as a time changed centred pure jump L\'evy process. The corresponding time change processes are independent of the Brownian motion and of the pure jump L\'evy process respectively and they are related to the process $\lambda$.

\bigskip
For any $t$, let $\F^\mu_t$ be the $\sigma$-algebra generated by the values of $\mu$ on $\B([0,t]\times \R)$. 
Then the filtrations $\FF$ and $\GG$ are defined by
\begin{align}
 \F_t&:=\bigcap_{s>t}\F^\mu_s\\
 \Gg_t&:=\F^\mu_t\vee\F^\Lambda.
\end{align}
Remark that, while $\F_0$ is trivial, $\G_0= \F^\Lambda$.
The filtration $\FF$ is relevant for modelling when applications are in view and the control problems will be studied under this information flow. The filtration $\GG$ is technical, better revealing the noise structure and it will serve for computational purposes.
Notice that $\mu$ is a martingale random field with respect to $\GG$ (and also $\FF$) in the sense of \cite[Definition 2.1]{DiE} and an It\^o type non-anticipating integral $I(\phi):=\int_0^T\int_\R\phi_s(z)\mu(ds,dz)$ is then well-defined. See \cite{DiE} (see also \cite{Applebaum} for the specific case of martingale random fields with independently scattered values). 
The space of integrands denoted by $\mathcal{I}:= L^2([0,T]\times\R\times\Omega,\B([0,T]\times\R)\otimes\F,\Lambda\otimes P)$, is the $L^2$-space of the elements admitting a $\GG$-predictable version. 
%\begin{align*}
 %\mathcal{I}:=\{\phi\in L^2([0,T]\times\R\times\Omega,\B([0,T]\times\R)\otimes\F,\Lambda\otimes P),\,\phi\text{ admits a }\GG\text{-predictable version}\}
%\end{align*}
%and it is equipped with 
The norm $\lVert\cdot\rVert_{\mathcal{I}}$ given by
\begin{align*}
 \lVert\phi\rVert_{\mathcal{I}}^2 := E\Big[\int\limits_0^T|\phi_t(0)|^2\lambda^B_t+\int\limits_{\R_0}\phi_t(z)\lambda^H_t\nu(dz)dt\Big].
\end{align*}
We recall that $\GG$-predictable refers to the predictable $\sigma$-algebra
$$
 \PG :=\sigma((s,u]\times B\times A:\,0\leq s<u\leq T,\, A\in\Gg_s, B\in\B(\R)),
$$
For later use we introduce also $\PF \subseteq \PG$ as
$$
 \PF :=\sigma((s,u]\times B\times A:\,0\leq s<u\leq T,\, A\in\F_s, B\in\B(\R)).
$$

When considering the stochastic integration with respect to $\mu$ and $\GG$, we have a stochastic integral representation theorem of the following form: {\it for any $\G_T$-measurable $F\in L^2(\Omega,\F,P)$, there exists $\phi \in \mathcal{I}$ such that
$$
F = F^0 \oplus \int_0^T\int_\R \phi_t(z) \mu(dt,dz) \quad \textrm{for}\quad F^0=E[F\vert \F^\Lambda], 
$$
where the integrand $\phi$ can be explicitly expressed in terms of the non-anticipating derivative.} 
See \cite[Definition 3.4, Theorem 3.1]{DiE} (see also \cite[Theorem 3.3]{DiS}).

%%%%%%%%%%%
%%%%%%%%%%%
%%%%%%%%%%% section 3

\section{Mean-field SDEs}
Following a classical approach by the fixed point theorem, yet adapted to the present framework, we prove the existence of a strong solution to the mean-field SDE
\begin{align}\label{mfSDE}
X_t=x+\int\limits_0^tb(s,X_{s-},\Ll_{X_{s}})ds+\int\limits_0^t\int\limits_\R\kappa(s,z,X_{s-},\Ll_{X_{s}})\mu(ds,dz),
\end{align}
for appropriate $b:[0,T]\times\R\times M_0(\R)\times\Omega\rightarrow\R$ and $\kappa:[0,T]\times\R\times\R\times M_0(\R)\times\Omega\rightarrow\R$, where $M_0(S)$ denotes the space of probability measures on the topological space $S$ equipped with the Borel $\sigma$-algebra and, for all $s$, $\Ll_{X_{s}}$ denotes the law of $X_{s}$. 
Mean-field SDEs driven by Brownian or L\'evy noises were studied in e.g. \cite{AD} and \cite{JMW}.
Note that the results of this section are valid for any martingale random field with square integrable conditionally independent values as in \cite[Definition 2.1]{DiE}. 
To keep the exposition uniform throughout the paper we present the results for the time changed noises. In this case, for the filtration $\GG$, we have that, for all $\B \in \B(\R)$,
$\langle \mu([0,\cdot]\times B)\rangle_t = \Lambda([0,t]\times B)$, $\in [0,T]$.
See \cite[Theorem 2.1]{DiE}.

\bigskip
Hereafter we consider two metric spaces with Wasserstein metric. The first is the space $M_2(\R)$ of elements $Q\in M_0(\R)$ such that 
$
 \int_\R|r|^2 Q(dr)<\infty
$
equipped with the metric $d_\R$ given by the infimum
$$
d_{\R}(P,Q) =\inf_R\Big(\int\limits_{\R^2}|v-w|^2R(dv,dw)\Big)^{\frac{1}{2}}
$$
over all measures $R\in M_0(\R^2)$ with marginals $P$ and $Q$, that is $R(U\times S)=P(U)$ and $R(S\times U)=Q(U)$, for all $U\in \B(\R)$.

Let $\DD$ denote the space of all real c\`adl\`ag functions on $[0,T]$ equipped with the $\sup$-norm $\lVert\cdot\rVert_\infty$. 
As above we define the metric space
$M_2(\DD)$ of elements $Q\in M_0(\DD)$ such that $\int_\DD\lVert Y\rVert^2_\infty Q(dY)<\infty$, equipped with the metric
$$
d_{\DD}(P,Q)=\inf_R \Big(\int\limits_{\DD^2}\lVert V-W\rVert_\infty^2R(dV,dW)\Big)^{\frac{1}{2}} 
$$
where the infimum is taken over all $R\in M_0(\DD^2)$ with marginals $P$ and $Q$.
%Note that these two spaces are Banach-spaces.

Let $Q \in M_2(\DD)$ and, for every $s$, let $Q_s$ be the probability measure corresponding to:
$$
Q_s(A) = Q\{ Y\in \DD :\, Y(s) \in A\}
$$
At first we study an SDE of type:
\begin{align}\label{mfSDE-Q}
 X_t=x+\int\limits_0^t b(s,X_{s-},Q_{s})ds+\int\limits_0^t\int\limits_\R\kappa(s,z,X_{s-},Q_{s})\mu(ds,dz),
\end{align}
and then we specialise the result to \eqref{mfSDE}.
To guarantee that the terms in the above equation are well-defined, we summarise some results.
\begin{lemma}
\label{cagladicityQminus}
For all $s\in [0,T]$, the probability measure $Q_s \in M_2(\R)$ and the function $s \longmapsto Q_{s}$ is c\`adl\`ag and Borel measurable.
\end{lemma}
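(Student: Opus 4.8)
The plan is to realise each $Q_s$ as the image of $Q$ under an evaluation map on $\DD$, and then to transfer the pathwise regularity of càdlàg functions to regularity in the Wasserstein distance $d_\R$ by means of a single, explicitly chosen coupling together with dominated convergence. I would begin with the elementary measurability and integrability bookkeeping. The evaluation map $e_s:\DD\to\R$, $e_s(Y):=Y(s)$, satisfies $|e_s(Y)-e_s(Z)|\le\lVert Y-Z\rVert_\infty$, hence is continuous and in particular Borel, so $Q_s:=Q\circ e_s^{-1}$ is a well-defined Borel probability measure on $\R$. Since $|Y(s)|\le\lVert Y\rVert_\infty$, the change-of-variables formula gives
\[
\int_\R|r|^2\,Q_s(dr)=\int_\DD|Y(s)|^2\,Q(dY)\le\int_\DD\lVert Y\rVert_\infty^2\,Q(dY)<\infty,
\]
so $Q_s\in M_2(\R)$. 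The map $Y\mapsto Y(s-)$ is Borel as the pointwise limit of the $e_{s-1/n}$, and the same estimate (using $|Y(s-)|\le\lVert Y\rVert_\infty$) shows that its image measure $Q_{s-}:=Q\circ(Y\mapsto Y(s-))^{-1}$ also lies in $M_2(\R)$; this will be the candidate left limit.

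The key estimate is that, for any $s,t\in[0,T]$, the image of $Q$ under $Y\mapsto(Y(s),Y(t))$ is a measure on $\R^2$ with marginals $Q_s$ and $Q_t$, hence an admissible competitor in the definition of $d_\R$, so that
\[
d_\R(Q_s,Q_t)^2\le\int_\DD|Y(s)-Y(t)|^2\,Q(dY).
\]
For right-continuity at $s$, let $h\downarrow 0$: for every $Y\in\DD$ one has $Y(s+h)\to Y(s)$ by right-continuity of the path, while $|Y(s+h)-Y(s)|^2\le 4\lVert Y\rVert_\infty^2$, which is $Q$-integrable by $Q\in M_2(\DD)$; dominated convergence then yields $\int_\DD|Y(s+h)-Y(s)|^2\,Q(dY)\to 0$, whence $d_\R(Q_s,Q_{s+h})\to 0$. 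For the left limit at $s>0$, the analogous argument with $Y(s-h)\to Y(s-)$ and the coupling $Y\mapsto(Y(s-h),Y(s-))$ of $Q_{s-h}$ and $Q_{s-}$ gives $d_\R(Q_{s-h},Q_{s-})\to 0$. Thus $s\longmapsto Q_s$ is càdlàg with values in $(M_2(\R),d_\R)$.

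Finally, Borel measurability follows from right-continuity by a standard approximation: setting $\sigma_n(s):=\frac{T}{n}\lceil ns/T\rceil$, each $s\longmapsto Q_{\sigma_n(s)}$ takes finitely many values on Borel level sets, hence is Borel, and $\sigma_n(s)\downarrow s$ forces $Q_{\sigma_n(s)}\to Q_s$ in $d_\R$ for every $s$; a pointwise limit of Borel maps into a metric space is Borel. I do not expect a genuine obstacle here: the only points requiring a little care are the two dominated-convergence steps, i.e. checking that $\lVert\cdot\rVert_\infty$ is $Q$-square-integrable and that $Y(s\pm h)$ converges for \emph{every} element of $\DD$, and these are precisely the places where the hypothesis $Q\in M_2(\DD)$ and the definition of $\DD$ as the space of càdlàg paths are used.
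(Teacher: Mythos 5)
Your proposal is correct and follows essentially the same route as the paper's (much terser) sketch: domination of $|Y(s)|$ by $\lVert Y\rVert_\infty$ for $Q_s\in M_2(\R)$, the coupling $Y\mapsto(Y(s),Y(t))$ plus dominated convergence for the c\`adl\`ag property with $Q_{s-}$ identified as the law of $Y(s-)$, and right-continuity combined with step-function approximation for Borel measurability. The explicit Wasserstein bound $d_\R(Q_s,Q_t)^2\le\int_\DD|Y(s)-Y(t)|^2\,Q(dY)$ is exactly the estimate the paper leaves implicit, so nothing further is needed.
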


\begin{proof}
The proof is based on direct arguments, which can also be partially retrieved within the proof of \cite[Proposition 1.2]{JMW}. Hereafter follows a sketch. The proof of $Q_s\in M_2(\R)$ exploits the domination by the sup-norm. The c\`adl\`ag property is obtained by dominated convergence. For this we observe that 
$Q_{s-}$ is the weak limit of $Q_u$ for $u \uparrow s$ and it is also
$$
Q_{s-} (A) = Q\{ Y\in \DD :\, Y(s-) \in A\}.
$$
The measurability is proved by point-wise approximation taking, e.g., the sequence of step functions $F_n:[0,T]\rightarrow M_2(\R)$ of type
\begin{align*}
F_n(t):=\sum_{j=1}^n Q_{\frac{j}{n}T}  \ind_{[\frac{j-1}{n}T,\frac{j}{n}T)}(t) 
\end{align*}
Here we make use of the c\`adl\`ag property proved earlier.
\end{proof}

For later use, we introduce the notation
$
\Ss^\FF_2
$ 
for the $\FF$-adapted stochastic processes $Y$ such that $\Vert Y \Vert^2_{S_2}=E\Big[\sup_{t \in [0,T]} \vert Y_t \vert^2 \Big] < \infty$.
Furthermore, for any $s\in[0,T]$, we introduce the notation $\Vert \cdot\Vert_{\lambda_s}$ for the seminorm defined ($\omega$-wise) by
\begin{align*}
 \lVert\alpha\rVert_{\lambda_s}^2 := |\alpha(0)|^2\lambda^B_s+\int_{\R_0}|\alpha(z)|^2\lambda^H_s \nu(dz).
\end{align*}

\begin{assumptions}\hspace{10cm}
 \begin{itemize}
  \item[$(E1)$] The real functions $b(s,x,\Yy, \omega)$ and $\kappa(s,z,x,\Yy, \omega)$, $s\in [0,T]$
  $x\in \R$, $z \in \R$, $\Yy\in M_2(\R)$, $\omega\in \Omega$
  are $\PF\otimes\B(\R)\otimes\B(M_2(\R))$-measurable.
  \item[$(E2)$] The functions $(x,\Yy)\longmapsto b(s,x,\Yy,\omega)$ and $(x,\Yy)\longmapsto \kappa(s,\cdot,x,\Yy,\omega)$ are globally Lipschitz, i.e. for all $s,\omega$ there exists a constant $C\geq 0$ such that
  \begin{align*}
   &|b(s,x_1,\Yy_1,\omega)-b(s,x_2,\Yy_2,\omega)|+\lVert\kappa(s,\cdot,x_1,\Yy_1,\omega)-\kappa(s,\cdot,x_2,\Yy_2,\omega)\rVert_{\lambda_s} \\
   &\leq C\big(|x_1-x_2|+d_{\R}(\Yy_1,\Yy_2)\big)\quad \textrm{for all } x_1, x_2 \in \R, \Yy_1, \Yy_2 \in M_2(\R)
  \end{align*}
  \item[$(E3)$] For the Dirac measure at 0, we have
  \begin{align*}
   E\Big[\int_0^T|b(s,0,\delta_0)|^2+\lVert\kappa(s,\cdot,0,\delta_0)\rVert_{\lambda_s}^2ds\Big]<\infty.
  \end{align*}
 \end{itemize}
\end{assumptions}

% \begin{example}\label{AlternativeP0}
% Condition $(E1)$ is satisfied e.g. by functions of type
% % A sufficient condition for $(E1)$ to be satisfied is that $b$ and $\kappa$ have the following structure
%  \begin{align*}
%   b(t,x,\Yy,\omega)&=b_0(t,\omega)b_1(t,x,\Yy)+b_2(t,\omega)\text{ and}\\
%   \kappa(t,z,x,\Yy,\omega)&=\kappa_0(t,z,\omega)\kappa_1(t,z,x,\Yy)+\kappa_2(t,z,\omega),
%  \end{align*}
%  where $b_0$, $b_2$, $\kappa_0$ and $\kappa_2$ are $\FF$-predictable and $(t,x,\Yy)\longmapsto b_1(t,x,\Yy)$ and $(t,z,x,\Yy)\longmapsto\kappa_1(t,z,x,\Yy)$ are continuous deterministic functions. %This is easy to see: $((\omega,t,z),x,\Yy)\longmapsto b_0(t,\omega)$ (and analogously $b_2(t,\omega)$, $\kappa_0(t,\omega)$, $\kappa_2(t,\omega)$) is $\PF\otimes\B(\R)\otimes\B(M_2(\R))-\B(\R)$-measurable. $(t,x,\Yy)\longmapsto b_1(t,x,\Yy)$ is continuous and therefore Borel-measurable, therefore $((\omega,t,z),x,\Yy)\longmapsto b_1(t,x,\Yy)$ is $\PF\otimes\B(\R)\otimes\B(M_2(\R))-\B(\R)$-measurable. The same holds for $\kappa_1$. The product of measurable functions is measurable and which shows that $(E1)$ holds.
% \end{example}

\begin{remark}\label{kappainI}
Under assumptions $(E1)$ and $(E2)$ we have that, for any $\FF$-predictable process $x_t$, $t\in[0,T]$ and $Q_{t}$ as defined above, the stochastic process
 \begin{align}
  (t, z, \omega) \longmapsto\kappa(t,z,x_t(\omega),Q_{t}, \omega)
 \end{align}
 is predictable, i.e. $\PF$-measurable.
To see this it is enough to observe that $(t,z, \omega)\longmapsto x_t(\omega)$ is $\PF$-measurable and then proceed by composition of measurable functions.
\end{remark}

%\begin{proof}
 %Let $(\omega,t,z)\longmapsto x_t(\omega)$ be $\PF-\B(\R)$-measurable. Moreover, as $t\longmapsto Q_{t}$ is deterministic and $\B([0,T])-\B(M_2(\R))$-measurable, its ``extension'' $(\omega,t,z)\longmapsto Q_{t}$ is $\PF-\B(M_2(\R))$-measurable. Therefore, the function
 %\begin{align*}
  %(\omega,t,z)\longmapsto\begin{pmatrix}
     %                 (\omega,t,z)\\
        %              x_t(\omega)\\
           %           Q_{t}
              %       \end{pmatrix}
 %\end{align*}
 %is $\PF-\PF\otimes\B(\R)\otimes\B(M_2(\R))$-measurable and therefore, by composition of measurable functions,
 %\begin{align*}
  %(\omega,t,z)&\longmapsto b(t,x_t(\omega),Q_{t},\omega)\text{ and}\\
  %(\omega,t,z)&\longmapsto \kappa(t,z,x_t(\omega),Q_{t},\omega)
 %\end{align*}
 %are $\PF-\B(\R)$-measurable.
%\end{proof}

\begin{theorem}\label{ExistenceAndUniquenessSDE}
 Assume $(E1)-(E3)$.
 For any fixed probability measure $Q\in M_2(\DD)$, the SDE \eqref{mfSDE-Q}:
 \begin{align}\label{SDEfixedMeasure}
  X^Q_t=x+\int\limits_0^tb(s,X^Q_{s-},Q_s)ds+\int\limits_0^t\int\limits_\R\kappa(s,z,X^Q_{s-},Q_s)\mu(ds,dz),
 \end{align}
 has a unique c\`adl\`ag solution in $\Ss^\FF_2$. 
\end{theorem}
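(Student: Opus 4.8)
The plan is to prove existence and uniqueness by a Picard iteration / Banach fixed point argument on the space $\Ss^\FF_2$, exactly as one does for classical SDEs but adapted to the time-changed martingale random field $\mu$. First I would fix the measure $Q \in M_2(\DD)$ once and for all, so that by Lemma \ref{cagladicityQminus} the curve $s \longmapsto Q_s$ is a fixed, deterministic, c\`adl\`ag, Borel-measurable path in $M_2(\R)$, and in particular $\sup_{s}d_\R(Q_s,\delta_0)<\infty$ by c\`adl\`agness on the compact $[0,T]$. Combined with $(E2)$ and $(E3)$ this gives the linear-growth bound
\begin{align*}
E\Big[\int_0^T |b(s,x,Q_s)|^2 + \lVert\kappa(s,\cdot,x,Q_s)\rVert_{\lambda_s}^2\, ds\Big] \leq C\Big(1 + E\big[\sup_{s}|x_s|^2\big]\Big)
\end{align*}
for any $\FF$-predictable $x$, which, together with Remark \ref{kappainI}, guarantees that the right-hand side of \eqref{SDEfixedMeasure} is well-defined and lands in $\Ss^\FF_2$ whenever $X$ does.

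Next I would define the map $\Phi$ on $\Ss^\FF_2$ sending $X$ to the process $\Phi(X)_t := x + \int_0^t b(s,X_{s-},Q_s)ds + \int_0^t\int_\R \kappa(s,z,X_{s-},Q_s)\mu(ds,dz)$. The integrand $\kappa(s,z,X_{s-},Q_s)$ is $\PF$-measurable by Remark \ref{kappainI} (applied to the predictable process $X_{s-}$) and lies in $\mathcal{I}$ by the growth bound above, so $\Phi$ is well-defined; the stochastic integral term is a $\GG$-martingale (hence, after taking the right-continuous version, the whole process is $\FF$-adapted and c\`adl\`ag). To show $\Phi$ is a contraction in a suitable norm, I would estimate $\lVert \Phi(X)-\Phi(X')\rVert_{S_2}$ using the Cauchy--Schwarz inequality on the drift term, the Burkholder--Davis--Gundy inequality together with the quadratic variation identity $\langle \mu([0,\cdot]\times B)\rangle_t = \Lambda([0,t]\times B)$ (stated in the excerpt) to control the martingale term by $E\big[\int_0^t \lVert \kappa(s,\cdot,X_{s-},Q_s)-\kappa(s,\cdot,X'_{s-},Q_s)\rVert_{\lambda_s}^2 ds\big]$, and then the Lipschitz assumption $(E2)$ — note the $d_\R(Q_s,Q_s)=0$ terms cancel — to bound everything by $C\, E\big[\int_0^t \sup_{r\leq s}|X_r - X'_r|^2\, ds\big]$. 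Iterating this Gronwall-type inequality gives $\lVert \Phi^{(n)}(X) - \Phi^{(n)}(X')\rVert_{S_2}^2 \leq \frac{(Ct)^n}{n!}\lVert X - X'\rVert_{S_2}^2$, so some iterate $\Phi^{(n)}$ is a strict contraction on the Banach space $\Ss^\FF_2$, and the Banach fixed point theorem yields a unique fixed point, which is the unique c\`adl\`ag solution in $\Ss^\FF_2$.

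The main obstacle I anticipate is not the fixed point mechanics but the careful verification of the measurability and integrability bookkeeping in the time-changed setting: namely that $\kappa(s,z,X_{s-},Q_s)$ genuinely belongs to $\mathcal{I}$ with the correct norm estimate, that the BDG inequality applies to $I(\phi)$ with the quadratic variation expressed through $\Lambda$ (equivalently through $\lambda^B$, $\lambda^H$, $\nu$) rather than through Lebesgue measure, and that the resulting process admits a c\`adl\`ag, $\FF$-adapted version. This requires invoking the martingale random field integration theory of \cite{DiE} for the moment and continuity properties of $I(\phi)$, and using that $\sup_s d_\R(Q_s,\delta_0) < \infty$ so that the "constant" coefficient contribution $E[\int_0^T |b(s,X_{s-},Q_s)|^2 + \lVert\kappa(s,\cdot,X_{s-},Q_s)\rVert_{\lambda_s}^2 ds]$ stays finite. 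Once these facts are assembled, a localisation/stopping-time argument (stopping when $\sup|X_t|$ exceeds a level) can be used, if needed, to make all the expectations finite during the iteration, after which one passes to the limit. The remaining steps are the routine Picard estimates sketched above.
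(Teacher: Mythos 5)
Your proposal is correct and follows essentially the same route as the paper: define the solution map on $\Ss^\FF_2$, verify predictability and membership in $\mathcal{I}$ via Remark \ref{kappainI} and the uniform bound $d_\R(Q_s,\delta_0)^2\leq\int_\DD\lVert Y\rVert_\infty^2Q(dY)<\infty$, control the martingale term by Burkholder--Davis--Gundy with quadratic variation $\Lambda$, and show an iterate is a contraction via the $\frac{(Ct)^n}{n!}$ Picard estimate. The only part you leave as a gesture --- the stopping-time/Gronwall argument --- is exactly what the paper carries out in its Step~1 to show that \emph{every} c\`adl\`ag solution automatically lies in $\Ss^\FF_2$ (so that uniqueness holds among all c\`adl\`ag solutions, not only those assumed a priori to be in $\Ss^\FF_2$), so you would need to spell that out to match the paper's full claim.
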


\begin{proof}
 The proof is organised in two steps. First, we show that, if there is a c\`adl\`ag solution $X^Q$ to \eqref{SDEfixedMeasure}, then it necessarily lies in the Banach space $\Ss^\FF_2$. 
 In a second step, we use Banach's fixed point theorem in order to obtain existence and uniqueness. To do so, we define the mapping $F:\Ss^\FF_2\rightarrow \Ss^\FF_2$, by
 \begin{align*}
  F(X)_t:=x+\int\limits_0^tb(s,X_{s-},Q_s)ds+\int\limits_0^t\int\limits_\R\kappa(s,z,X_{s-},Q_s)\mu(ds,dz),
 \end{align*}
 and show that it is a contraction.

 \vspace{2mm}
 \textit{Step 1:} We prove that any c\`adl\`ag solution $X^Q$ to \eqref{SDEfixedMeasure} necessarily lies in $S^2_\FF$. For this, we consider the increasing sequence of stopping times $\tau_n:=\inf\{t\in[0,T]:\,X^Q_t>n\}$, $n\in\IN$. Since $X^Q$ is c\`adl\`ag, we have $X^Q_{s-}\leq n$ for each $s\leq\tau_n$. Observe
 \begin{align*}
  \lVert X^Q_{\cdot\wedge\tau_n}\rVert_{S^2}^2
  =E \Big[\sup_{t\in[0,T]}|X^Q_{t\wedge\tau_n}|^2\Big]
  \leq 3|x|^2+3TE\Big[\int\limits_0^{T\wedge\tau_n}|b(s,X^Q_{s-},Q_s)|^2ds\Big]+3E\Big[\sup_{t\in[0,T]}|M_{t\wedge\tau_n}|^2\Big],
 \end{align*}
 where $M_t:=\int\limits_0^t\int\limits_\R\kappa(s,z,X^Q_{s-},Q_s)\mu(ds,dz)$. By the Burkholder-Davis-Gundy inequality we have
 \begin{align*}
  E\Big[\sup_{t\in[0,T]}|M_{t\wedge\tau_n}|^2\Big]
  =C_1E\Big[[M]_{T\wedge\tau_n}\Big]=C_1E\Big[\int\limits_0^{T\wedge\tau_n}\lVert \kappa(s,\cdot,X^Q_{s-},Q_s)\rVert^2_{\lambda_s}ds\Big].
 \end{align*}
 Therefore, exploiting (E2) and (E3), we get
 \begin{align}
  E\Big[\sup_{t\in[0,T]} & |X^Q_{t\wedge\tau_n}|^2\Big]\nonumber
  \leq 3|x|^2+3(T\vee C_1)E\Big[\int\limits_0^{T\wedge\tau_n}|b(s,X^Q_{s-},Q_s)|^2+\lVert \kappa(s,\cdot,X^Q_{s-},Q_s)\rVert^2_{\lambda_s}ds\Big]\nonumber\\
  &\leq 3|x|^2+3(T\vee C_1)E\Big[\int\limits_0^T2|b(s,0,\delta_0)|^2+2\lVert \kappa(s,\cdot,0,\delta_0)\rVert^2_{\lambda_s}ds\Big]\nonumber\\
  &+3(T\vee C_1)E\Big[\int\limits_0^{T\wedge\tau_n}2|b(s,X^Q_{s-},Q_s)-b(s,0,\delta_0)|^2+2\lVert \kappa(s,\cdot,X^Q_{s-},Q_s)-\kappa(s,\cdot,0,\delta_0)\rVert^2_{\lambda_s}ds\Big]\nonumber\\
  &\leq 3|x|^2+6(T\vee C_1)E\Big[\int\limits_0^T|b(s,0,\delta_0)|^2+\lVert \kappa(s,\cdot,0,\delta_0)\rVert^2_{\lambda_s}ds\Big]\nonumber\\
  &+6(T\vee C_1)C^2E\Big[\int\limits_0^{T\wedge\tau_n}|X^Q_{s-}|^2+d_\R(Q_s,\delta_0)^2ds\Big].\label{Gronwall1}
 \end{align}
 Moreover, observe that
 \begin{align*}
  d_\R(Q_s,\delta_0)^2
  %&=\inf\Big\{\int\limits_{\R^2}|v-w|^2R(ds,dt): \,R\text{ has marginals }Q_s\text{ and }\delta_0\Big\}\\
  &\leq\int\limits_{\R^2}|v-w|^2Q_s(dv)\delta_0(dw)
  %=\int\limits_{\R}|v|^2Q_s(dv)
  %=\int\limits_{\DD}|\rho_s(Y)|^2Q(dY)
  \leq\int\limits_{\DD}\lVert Y\rVert_{\infty}^2Q(dY)<\infty.
 \end{align*}
Substituting this in \eqref{Gronwall1} and exploiting $|X^Q_{s-}|^2\leq n^2$, for all $s\leq T\wedge\tau_n$, we get
 \begin{align*}
  E\Big[\sup_{t\in[0,T]}|X^Q_{t\wedge\tau_n}|^2\Big]
  &\leq 3|x|^2+6(T\vee C_1)E\Big[\int\limits_0^T|b(s,0,\delta_0)|^2+\lVert \kappa(s,\cdot,0,\delta_0)\rVert^2_{\lambda_s}ds\Big]\\
  &\quad+6(T\vee C_1)C^2T\Big(n^2+\int\limits_{\DD}\lVert Y\rVert_{\infty}^2Q(dY)\Big)
  <\infty.
 \end{align*}
Hence the function $s\longmapsto E[\sup_{t\in[0,s]}|X^Q_{t\wedge\tau_n}|^2]$ is Lebesque integrable. In fact \begin{align*}
  \int\limits_0^T \Big|E\Big[\sup_{t\in[0,s]}|X^Q_{t\wedge\tau_n}|^2\Big]\Big| ds
  \leq T E \Big[\sup_{t\in[0,T]}|X^Q_{t\wedge\tau_n}|^2\Big]<\infty.
 \end{align*}
 The integrability allows us now to apply Gronwall's inequality to \eqref{Gronwall1} since
 \begin{align*}
  E\Big[\sup_{t\in[0,T]}|X^Q_{t\wedge\tau_n}|^2\Big]
  %&\leq 3|x|^2+6(T\vee C_1)E\Big[\int\limits_0^T|b(s,0,\delta_0)|^2+\lVert \kappa(s,\cdot,0,\delta_0)\rVert^2_{\lambda_s}ds\Big]\\
  %&\quad+6(T\vee C_1)C^2\int\limits_0^{T}E[\sup_{t\in[0,s]}|X^Q_{t\wedge\tau_n}|^2] ds+6(T\vee C_1)C^2T\int\limits_{\DD}\lVert Y\rVert_{\infty}^2Q(dY).
\leq   K_1 + K_2 \int\limits_0^{T}E\Big[\sup_{t\in[0,s]}|X^Q_{t\wedge\tau_n}|^2\Big] ds
 \end{align*}
 with the finite positive constants
 \begin{align*}
  K_1&:=3|x|^2+6(T\vee C_1)E\Big[\int\limits_0^T|b(s,0,\delta_0)|^2+\lVert \kappa(s,\cdot,0,\delta_0)\rVert^2_{\lambda_s}ds\Big]+6(T\vee C_1)C^2T\int\limits_{\DD}\lVert Y\rVert_{\infty}^2Q(dY)\\
  K_2&:=6(T\vee C_1)C^2.
 \end{align*}
 Thus we obtain
$
  E[\sup_{t\in[0,T]}|X^Q_{t\wedge\tau_n}|^2]\leq K_1e^{K_2T}<\infty.
$
By monotone convergence we can conclude $\lVert X^Q\rVert_{S^2}^2 < \infty$.
% Since $(\sup_{t\in[0,T]}|X^Q_{t\wedge\tau_n}|^2)_{n\geq0}$ is an increasing sequence of nonnegative random variables and since the right handside of this inequality does not depend on $n$, monotone convergence yields
 %\begin{align*}
  %\lVert X^Q\rVert_{S^2}^2=E[\sup_{t\in[0,T]}|X^Q_{t}|^2]=E[\sup_{n\geq0}\sup_{t\in[0,T]}|X^Q_{t\wedge\tau_n}|^2]=\sup_{n\geq0}E[\sup_{t\in[0,T]}|X^Q_{t\wedge\tau_n}|^2]\leq K_1e^{K_2T}<\infty.
 %\end{align*}

 \vspace{2mm}
 \textit{Step 2:} 
 Here we see that for any $X\in \Ss^\FF_2$ the value $F(X)$ is well-defined. 
Since  $t\longmapsto$, $(X_{s-})_{s\in[0,T]}$ is c\`agl\`ad (and therefore predictable as it is adapted), thanks to Remark \ref{kappainI} we can guarantee that $\phi_s(\cdot):=\kappa(s,\cdot,X_{s-},Q_s)$ is predictable. 
 
For any $X\in \Ss^\FF_2$ and being $\kappa$ Lipschitz, we get that $\lVert\phi\rVert_{\mathcal{I}}<\infty$. This implies that $F$ is well-defined on the entire  $\Ss^\FF_2$ and the stochastic process $\int_0^t\int_\R\phi_s(z)\mu(ds,dz)$, $t\in [0,T]$, is a martingale (see \cite{DiE}, Remark 3.2)). 
Since $\FF$ is right-continuous, then the martingale process of the integrals has a c\`adl\`ag version (see, e.g. Theorem 6.27 (ii) in \cite{K}).
Then, w.l.o.g., we choose $F(X)$ to be c\`adl\`ag (the integral w.r.t. $ds$ is continuous). 
By the same arguments as in Step 1, with the only difference being that we exploit $E[\sup_{t\in[0,T]}|X_t|^2]<\infty$ instead of using the Gr\"onwall inequality, we can see that $E[\sup_{t\in[0,T]}|F(X)_t|^2]<\infty$. This proves that $F$ indeed maps into $\Ss^\FF_2$.

 \vspace{1mm}
Let $F^{\circ 0}=\id$, i.e. $F^{\circ 0}(X)=X$, and let $F^{\circ n}$
%$$F^{\circ n}:=\underbrace{F\circ F\circ\cdots\circ F}_{\text{n times}}$$
 denote the $n^{\text{th}}$ composition of $F$. 
Now we show that, for $n$ large enough, this is a contraction on $\Ss^\FF_2$. 
By the same reasoning as above, we have
 \begin{align*}
  \lVert F^{\circ n}(X)-F^{\circ n}(Y)\rVert_{S_2}^2
  %&=E \Big[\sup_{t\in[0,T]}|F^{\circ n}(X)_t-F^{\circ n}(Y)_t|^2\Big]\\
  &=E\Big[\sup_{t\in[0,T]}|F(F^{\circ n-1}(X))_t-F(F^{\circ n-1}(Y))_t|^2\Big]\\
&=E\Big[\sup_{t\in[0,T]}\Big|\int\limits_0^tb(s,F^{\circ n-1}(X)_{s-},Q_s)-b(s,F^{\circ n-1}(Y)_{s-},Q_s)ds
 \\
  &\quad+\int\limits_0^t\int\limits_\R\kappa(s,z,F^{\circ n-1}(X)_{s-},Q_s)-\kappa(s,z,F^{\circ n-1}(Y)_{s-},Q_s)\mu(ds,dz)\Big|^2\Big]
   \end{align*}
  \begin{align*}
   \hspace{2cm} &\leq 2(T\vee C_1)E\Big[\int\limits_0^T|b(s,F^{\circ n-1}(X)_{s-},Q_s)-b(s,F^{\circ n-1}(Y)_{s-},Q_s)|^2ds\\
  &\quad+\int\limits_0^T\lVert\kappa(s,z,F^{\circ n-1}(X)_{s-},Q_s)-\kappa(s,z,F^{\circ n-1}(Y)_{s-},Q_s)\rVert_{\lambda_s}^2ds\Big]\\
  &\leq 2(T\vee C_1)C^2\int\limits_0^TE\Big[\sup_{t\leq s}|F^{\circ n-1}(X)_{t}-F^{\circ n-1}(Y)_{t}|^2\Big]ds.
 \end{align*}
 By iteration down to $0$, making use of $F^{\circ 0}=\id$ and Fubini's theorem, we get
 \begin{align*}
  \lVert F^{\circ n}(X)-F^{\circ n}(Y)\rVert_{S_2}^2&\leq 2^n(T\vee C_1)^nC^{2n}\int\limits_0^T\int\limits_0^{t_n}\cdots\int\limits_0^{t_2}E\Big[\sup_{t\leq t_1}|X_{t}-Y_{t}|^2\Big]dt_1\cdots dt_{n-1}dt_n\\
  &=2^n(T\vee C_1)^nC^{2n}\int\limits_0^T\int\limits_{t_1}^T\cdots\int\limits_{t_{n-1}}^TE\Big[\sup_{t\leq t_1}|X_{t}-Y_{t}|^2\Big]dt_n\cdots dt_2dt_1\\
  &=2^n(T\vee C_1)^nC^{2n}\int\limits_0^TE\Big[\sup_{t\leq t_1}|X_{t}-Y_{t}|^2\Big]\int\limits_{t_1}^T\cdots\int\limits_{t_{n-1}}^Tdt_n\cdots dt_2dt_1\\
  &=2^n(T\vee C_1)^nC^{2n}\int\limits_0^TE\Big[\sup_{t\leq t_1}|X_{t}-Y_{t}|^2\Big]\frac{(T-t_1)^{n-1}}{(n-1)!}dt_1\\
  &\leq\frac{2^n(T\vee C_1)^nC^{2n}T^n}{n!}E\Big[\sup_{t\leq T}|X_{t}-Y_{t}|^2\Big].
 \end{align*}
Since
 \begin{align*}
  \sum_{n=0}^\infty\frac{2^n(T\vee C_1)^nC^{2n}T^n}{n!}=\exp(2(T\vee C_1)C^{2}T)<\infty,
 \end{align*}
 the term $\frac{2^n(T\vee C_1)^nC^{2n}T^n}{n!}$ vanishes as $n$ goes to infinity. 
 Thus, for $n$ large enough, we have
 \begin{align*}
  \lVert F^{\circ n}(X)-F^{\circ n}(Y)\rVert_{S_2}^2\leq\frac{1}{2}\lVert X-Y\rVert_{S_2}^2
 \end{align*}
and $F^{\circ n}$ is a contraction.
By Banach's fixed point theorem there exists one unique point $X^Q\in \Ss^\FF_2$ such that $X^Q=F^{\circ n}(X^Q)$. 
This is then also a fixed point for $F$. Observe that $F(X^Q)=F(F^{\circ n}(X^Q))=F^{\circ n}(F(X^Q))$. Hence $F(X^Q)$ is fixed point for $F^{\circ n}$. By uniqueness of the fixed point we have then $F(X^Q)=X^Q$. 
By this we conclude.
\end{proof}

We turn now to the study of \eqref{mfSDE}.

\begin{theorem}\label{ExistenceAndUniquenessmfSDE}
Assume $(E1)-(E3)$. The mean-field SDE \eqref{mfSDE} has exactly one non-exploding c\`adl\`ag solution $X$ in the sense that $X\in \Ss^\FF_2$, i.e.
% \begin{align*}
  $E\Big[\sup_{t\in[0,T]}|X_t|^2\Big]<\infty.
 $
 %\end{align*}
\end{theorem}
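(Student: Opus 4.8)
The plan is to obtain the solution of \eqref{mfSDE} as a fixed point of the map that assigns to a probability law the law of the corresponding solution of the decoupled equation \eqref{mfSDE-Q}, relying on Theorem \ref{ExistenceAndUniquenessSDE} together with a contraction argument modelled on its proof. For $X\in\Ss^\FF_2$, the process $X$ is c\`adl\`ag with $E[\sup_{t\in[0,T]}|X_t|^2]<\infty$, so its law $\Ll_X$, viewed as a probability measure on $\DD$, lies in $M_2(\DD)$, and by Lemma \ref{cagladicityQminus} its marginals $(\Ll_X)_s=\Ll_{X_s}$ lie in $M_2(\R)$. Hence Theorem \ref{ExistenceAndUniquenessSDE} yields a unique solution $X^{\Ll_X}\in\Ss^\FF_2$ of \eqref{mfSDE-Q} with $Q=\Ll_X$, and we may define $G:\Ss^\FF_2\to\Ss^\FF_2$ by $G(X):=X^{\Ll_X}$; the coefficients evaluated along $X^{\Ll_X}$ are predictable by Remark \ref{kappainI}. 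A process $X\in\Ss^\FF_2$ solves \eqref{mfSDE} if and only if $X=G(X)$, because $G(X)$ solves \eqref{mfSDE-Q} with $Q_s=(\Ll_X)_s$, and $(\Ll_X)_s=\Ll_{X_s}$ precisely when $X=G(X)$. Thus it suffices to show that $G$ has a unique fixed point in the Banach space $\Ss^\FF_2$.

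The key ingredient is a stability estimate for \eqref{mfSDE-Q} with respect to the measure argument. Given $Q,\tilde Q\in M_2(\DD)$, let $X^Q,X^{\tilde Q}$ be the solutions driven by the same $\mu$. Repeating the estimates of Step 1 in the proof of Theorem \ref{ExistenceAndUniquenessSDE} for the difference $X^Q-X^{\tilde Q}$ --- Cauchy--Schwarz for the drift term, Burkholder--Davis--Gundy for the stochastic integral, the Lipschitz bound $(E2)$, and the finiteness $\lVert X^Q\rVert_{S_2},\lVert X^{\tilde Q}\rVert_{S_2}<\infty$ --- one obtains, with a finite constant $K$ depending only on $C$, the BDG constant $C_1$ and $T$,
\begin{align*}
 E\Big[\sup_{t\leq u}|X^Q_t-X^{\tilde Q}_t|^2\Big]\leq K\int_0^u E\Big[\sup_{t\leq s}|X^Q_t-X^{\tilde Q}_t|^2\Big]\,ds+K\int_0^u d_\R(Q_s,\tilde Q_s)^2\,ds,\quad u\in[0,T],
\end{align*}
where the left-hand side is finite because $X^Q,X^{\tilde Q}\in\Ss^\FF_2$. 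Gronwall's inequality, applied in the running variable $u$, then gives
\begin{align*}
 E\Big[\sup_{t\leq u}|X^Q_t-X^{\tilde Q}_t|^2\Big]\leq Ke^{KT}\int_0^u d_\R(Q_s,\tilde Q_s)^2\,ds,\quad u\in[0,T].
\end{align*}

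To transfer this to the mean-field map, note that for $X,Y\in\Ss^\FF_2$ and every $s$ the joint law of $(X_s,Y_s)$ has marginals $\Ll_{X_s}$ and $\Ll_{Y_s}$, so $d_\R(\Ll_{X_s},\Ll_{Y_s})^2\leq E|X_s-Y_s|^2$. Applying the estimate above with $Q=\Ll_{G^{\circ n}(X)}$, $\tilde Q=\Ll_{G^{\circ n}(Y)}$, and writing $a_n(s):=E[\sup_{t\leq s}|G^{\circ n}(X)_t-G^{\circ n}(Y)_t|^2]$, $L:=Ke^{KT}$, we obtain
\begin{align*}
 a_{n+1}(s)\leq L\int_0^s d_\R\big(\Ll_{G^{\circ n}(X)_r},\Ll_{G^{\circ n}(Y)_r}\big)^2\,dr\leq L\int_0^s a_n(r)\,dr,\qquad a_0(s)\leq\lVert X-Y\rVert_{S_2}^2.
\end{align*}
By induction $a_n(T)\leq\lVert X-Y\rVert_{S_2}^2\,(LT)^n/n!$, so $\lVert G^{\circ n}(X)-G^{\circ n}(Y)\rVert_{S_2}^2\leq\tfrac{(LT)^n}{n!}\lVert X-Y\rVert_{S_2}^2\leq\tfrac12\lVert X-Y\rVert_{S_2}^2$ for $n$ large enough. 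Hence $G^{\circ n}$ is a contraction on $\Ss^\FF_2$; by Banach's fixed point theorem it has a unique fixed point $X\in\Ss^\FF_2$, and, exactly as at the end of the proof of Theorem \ref{ExistenceAndUniquenessSDE}, $G(X)$ is again a fixed point of $G^{\circ n}$, so $G(X)=X$. Any other c\`adl\`ag solution in $\Ss^\FF_2$ is likewise a fixed point of $G$, hence of $G^{\circ n}$, and therefore equals $X$; this gives existence and uniqueness.

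I expect the main work to be the stability estimate with its \emph{time-localised} right-hand side: a crude application of Gronwall at $u=T$ would merely bound $\lVert G(X)-G(Y)\rVert_{S_2}^2$ by a constant multiple of $\lVert X-Y\rVert_{S_2}^2$, which need not be a contraction, whereas keeping the running supremum up to time $s$ under the integral sign is what produces the factorial $1/n!$ and hence the contraction of $G^{\circ n}$. The remaining points --- well-definedness of $G$ as a map into $\Ss^\FF_2$, predictability of the coefficients along $X^{\Ll_X}$, and the Wasserstein bound via the natural coupling --- follow directly from Theorem \ref{ExistenceAndUniquenessSDE}, Remark \ref{kappainI} and Lemma \ref{cagladicityQminus}.
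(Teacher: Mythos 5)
Your proof is correct, and the core mechanism is the same as the paper's: decouple the law from the dynamics via Theorem \ref{ExistenceAndUniquenessSDE}, establish a stability estimate in the measure argument using $(E2)$, Burkholder--Davis--Gundy and Gronwall, control the Wasserstein distance of the marginals by the natural coupling $d_\R(\Ll_{X_s},\Ll_{Y_s})^2\leq E|X_s-Y_s|^2$, and close with a Picard iteration whose $n$-th iterate is a contraction thanks to the $1/n!$ factor. The one genuine difference is the space on which you iterate: the paper (following \cite{JMW}) defines $\Phi:M_2(\DD)\to M_2(\DD)$, $Q\mapsto\Ll_{X^Q}$, and contracts in the Wasserstein metric $d_\DD$, whereas you define the conjugate map $G=X\mapsto X^{\Ll_X}$ on $\Ss^\FF_2$ and contract in $\lVert\cdot\rVert_{S_2}$. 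The two are equivalent (fixed points correspond, and the estimates are identical since $d_\DD(\Ll_{X^Q},\Ll_{X^{\tilde Q}})^2\leq\lVert X^Q-X^{\tilde Q}\rVert_{S_2}^2$), but your version has the small advantage of delivering pathwise uniqueness in $\Ss^\FF_2$ in one stroke, while the measure-space version first yields uniqueness of the law and then invokes Theorem \ref{ExistenceAndUniquenessSDE} once more for uniqueness of the process; conversely, the paper's version works on a space that does not depend on the underlying probability space, which is the natural setting for propagation-of-chaos arguments. A minor phrasing slip: the identity $(\Ll_X)_s=\Ll_{X_s}$ holds for every $X$ by definition of the marginal, not ``precisely when $X=G(X)$''; the correct justification of the equivalence is that $X$ solves \eqref{mfSDE} iff it solves \eqref{mfSDE-Q} with $Q=\Ll_X$, which by the uniqueness in Theorem \ref{ExistenceAndUniquenessSDE} happens iff $X=X^{\Ll_X}$. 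This does not affect the argument.
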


We remark that in the case of the SDE \eqref{SDEfixedMeasure}, being $Q \in M_2(\DD)$ fixed, we could deduce that the unique solution was necessarily an element of $\Ss^\FF_2$. 
For the SDE \eqref{mfSDE} this is not the case. 
Hence we restrict the study to the non-exploding solutions.

\begin{proof}
Relying on Theorem \ref{ExistenceAndUniquenessSDE} the arguments follow the same steps as 
\cite[Proposition 1.2]{JMW}, which is though formulated for L\'evy processes only. Hereafter, we only sketch the main steps.

\vspace{1mm}
First we observe that, having restricted the study to non-exploding solutions $X$ we have
 \begin{align*}
  \int\limits_\DD\lVert Y\rVert^2_\infty \Ll_X(dY)=\int\limits_\Omega\sup_{t\in[0,T]}|X_t(\omega)|^2P(d\omega)=E\Big[\sup_{t\in[0,T]}|X_t|^2\Big]<\infty.
 \end{align*}
 Therefore necessarily $\Ll_X\in M_2(\DD)$. Define the function $\Phi:M_2(\DD)\rightarrow M_2(\DD)$ such that $Q\longmapsto\Ll_{X^Q}$, where $X^Q$ is the solution of \eqref{SDEfixedMeasure} corresponding to the input measure $Q$. By Theorem \ref{ExistenceAndUniquenessSDE}, $X^Q\in \Ss^\FF_2$, which implies $\Ll_X^Q\in M_2(\DD)$). Observe that $X^Q$ is a non-exploding solution of \eqref{mfSDE} if and only if $Q$ is a fixed point of $\Phi$.
Finally we show that $\Phi$ is a contraction. This is done following the same arguments as for Step 2 in the proof of Theorem \ref{ExistenceAndUniquenessSDE}.
\end{proof}

%%%%%%%%%%%%
%%%%%%%%%%%%
%%%%%%%%%%%%
%%%%%%%%%%%%
\section{Mean-field BSDEs}
In the sequel we intend to study the stochastic control problem
$$
\sup_u E\Big[ \int_0^T f(s, \lambda_s, X^u_{s-}, E[\varphi(X^u_s)], u_s) ds + g(X^u_T, E [\chi(X_T^u)]) \Big]
$$
via a maximum principle. Hence we deal with the adjoint equation associated to the Hamiltonian function, which follows backward dynamics. 
Before entering the core of the issue we present the necessary results related to mean-field BSDEs.
We follow the approach of \cite{BLP} and exploit the techniques suggested in \cite{DiS} and \cite{DiS2} for time changed L\'evy noises.

\vspace{2mm}
First we introduce some notation. 
For any random variable $X$ on $(\Omega,\F,P)$, we draw its independent copy, which is denoted by $X'$. More precisely, we consider the product probability space $(\Omega^2, \F^\otimes, P^\otimes)$
= $(\Omega\times\Omega,\F\otimes\F,P\otimes P)$ where we can identify the original random variable $X$ with
$$ 
X(\tilde\omega,\omega):=X(\omega)
$$
and its independent copy $X'$ with 
$$
X'(\tilde\omega,\omega):=X(\tilde\omega).
$$
Moreover, we define the functional $\mathbb{E}: L^1(\Omega^2,\R)\longrightarrow \R$:
$$
 \mathbb{E}[Y]:=\int\limits_{\Omega^2}Y(\tilde\omega,\omega)P^{\otimes2}(d\tilde\omega,d\omega)
 $$
 and the operator $E': L^1(\Omega^2,\R)\longrightarrow L^1(\Omega,\R)$:
 $$
 E'[Y](\omega):=\int\limits_\Omega Y(\tilde\omega,\omega)P(d\tilde\omega).
 $$
In particular, for the random variable $X$ and its copy $X'$ we have that
\begin{equation}
\label{correspondence}
E'[X]=X \textrm { and } E'[X']=E[X].
\end{equation}
Let us also introduce the spaces
$L^2_{ad}(\GG)$ and $L^2_{pred}(\GG)$ of $\GG$-adapted and, correspondingly, $\GG$-predictable stochastic processes such that $E[\int_0^T|Y_s|^2ds]<\infty $.
Also we define $ \Ss^\GG_2$ as the space of of $\GG$-adapted stochastic processes such that $\lVert Y\rVert_{\Ss_2}^2=E[\sup_{s\leq T}|Y_s|^2]<\infty.$
Furthermore we define $L^2_{pred}(\F\otimes\GG)$ of $\F\otimes\GG$-predictable stochastic process such that $\mathbb{E} \Big[\int_0^T|Y_s|^2ds \Big]<\infty$.
Here $\F\otimes\GG$ is the filtration given by $\F\otimes \G_t$, $t \in [0,T]$

Finally, let
$$
L^2(\delta_0+\nu):=\Big\{\alpha:\R\rightarrow\R:\: \lVert\alpha\rVert^2:=|\alpha(0)|^2+\int_{\R_0}|\alpha(z)|^2\nu(dz)<\infty\Big\}.
$$
In this framework we study
existence and uniqueness of the $\GG$-adapted solutions of the BSDE of type:
\begin{align}\label{GeneralmfBSDE}
 \begin{cases}
  dY_t&=E'\Big[h(t,\lambda_t,\lambda'_t,Y_t,Y'_t,Z_t(\cdot),Z'_t(\cdot))\Big]dt+\int_\R Z_t(z)\mu(dt,dz)\\
  Y_T&=F
 \end{cases}
\end{align}
for appropriate conditions on $F$ and
$h:[0,T]\times \R^2\times \R^2 \times \big(L^2(\delta_0+\nu)\big)^2\times \Omega^2 \longrightarrow \R$. 

\vspace{2mm}
For any $(Y,Z) \in L^2_{ad}(\GG) \times\Ii$ define the real function
\begin{align}
 \tilde h(t,l,y,z(\cdot)) := E'\Big[h\Big(t,l,\lambda'_t,y,Y_{t}',z(\cdot),Z_{t}(\cdot) '\Big)\Big],
 \quad t \in [0,T], l,y \in \R, z\in L^2(\delta_0+\nu).
\end{align}

\begin{assumptions}\label{AssumptionsmfBSDE}\hspace{10cm}
 \begin{itemize}
  \item[(C1)] $F\in L^2(\Omega, \F,P)$ is $\G_T$-measurable
  \item[(C2)] 
  $Y_1,Y_2 \in L^2_{ad}(\GG)$ and all $Z_1,\,Z_2\in\Ii$ the stochastic process
  $h\Big(t,\lambda_t,\lambda'_t,Y_{1,t},Y_{2,t}', Z_{1,t}(\cdot),Z_{2,t}(\cdot))' \Big)$, $t \in [0,T]$, is 
  $\F\otimes\GG$-adapted
     \item[(C3)] For all $y_1,\,y_1',\,y_2,\,y_2'\in\R$, $z_1,\,z_1',\,z_2,\,z_2'\in L^2(\delta_0+\nu)$ and $(\tilde \omega, \omega) \in \Omega^2$, there exists a constant $K>0$ such that
  \begin{align*}
  &|h(t,\lambda_t,\lambda'_t,y_1,y_1',z_1,z_1',\tilde\omega,\omega)-h(t,\lambda_t,\lambda'_t,y_2,y_2',z_2,z_2',\tilde\omega,\omega)|\\
  &\leq K \Big(|y_1-y_2|+|y_1'-y_2'|+\lVert z_1-z_2\rVert_{\lambda_t(\omega)}+\lVert z_1'-z_2'\rVert_{\lambda'_t(\tilde\omega,\omega)}\Big)
  \end{align*}
  \item[(C4)] the stochastic porcess $h(t,\lambda_t,\lambda'_t,0,0,0,0)$, $t\in [0,T]$ belongs to $L^2_{pred}(\F\otimes\GG)$
  \item[(C5)] For all $(Y,Z)\in L^2_{ad}(\GG)\times\Ii$ the stochastic process $\tilde h(t,\lambda_t,0,0)$ belongs to $L^2_{pred}( \GG)$.
 \end{itemize}
\end{assumptions}

\begin{theorem}
\label{solutionBSDE}
Assume $(C1)-(C5)$. Then there exists a unique $\GG$-adapted solution $(Y,Z) \in \Ss^\GG_2 \times \Ii$ to the mean-field BSDE \eqref{GeneralmfBSDE}. 
\end{theorem}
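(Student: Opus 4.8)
The plan is to reduce \eqref{GeneralmfBSDE} to a fixed point problem on the Banach space $\mathcal B:=L^2_{ad}(\GG)\times\Ii$, the contraction being built from the solution of the ``frozen'' BSDE in which the driver does not depend on $(Y,Z)$; the correct function space $\Ss^\GG_2$ for the first component will then be recovered a posteriori. So I would first treat, for a given $g\in L^2_{pred}(\GG)$, the equation $dY_t=g_t\,dt+\int_\R Z_t(z)\,\mu(dt,dz)$ with terminal value $Y_T=F$. Since $E[(\int_0^T g_s\,ds)^2]\le T\,E[\int_0^T g_s^2\,ds]<\infty$, the random variable $\xi:=F-\int_0^T g_s\,ds$ is $\G_T$-measurable and square integrable, so the representation theorem recalled in Section 2 (\cite{DiE}, see also \cite{DiS}) yields a unique $Z\in\Ii$ with $E[\xi\mid\G_t]=E[\xi\mid\F^\Lambda]+\int_0^t\int_\R Z_s(z)\,\mu(ds,dz)$ (recall $\G_0=\F^\Lambda$). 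Setting $Y_t:=E[\xi\mid\G_t]+\int_0^t g_s\,ds$ one checks directly that $(Y,Z)$ solves the frozen equation and $Y_T=F$; choosing the càdlàg version of the martingale $E[\xi\mid\G_\cdot]$ (possible since $\GG$ is right-continuous) and using Doob's inequality together with $\sup_t|\int_0^t g_s\,ds|^2\le T\int_0^T g_s^2\,ds$ gives $Y\in\Ss^\GG_2$. Uniqueness of $(Y,Z)$ in $\Ss^\GG_2\times\Ii$ is immediate from uniqueness of the representation and the isometry $E[\int_0^T\lVert Z_s\rVert_{\lambda_s}^2\,ds]=E[(\int_0^T\int_\R Z_s\,\mu)^2]$, which uses $\langle\mu([0,\cdot]\times B)\rangle_t=\Lambda([0,t]\times B)$.

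Next I would define the Picard map: for $(U,V)\in\mathcal B$ put $g^{U,V}_t:=E'[h(t,\lambda_t,\lambda'_t,U_t,U'_t,V_t(\cdot),V'_t(\cdot))]$ — which is exactly the partial application $\tilde h(t,\lambda_t,U_t,V_t(\cdot))$ built from the datum $(U,V)$ — and let $\Theta(U,V):=(Y,Z)$ be the solution of the frozen equation with this $g$. To see that $\Theta$ is well defined I must check $g^{U,V}\in L^2_{pred}(\GG)$: adaptedness is where $E'$ does its job, since by (C2) the process inside $E'$ is only $\F\otimes\GG$-adapted but integrating out the $\tilde\omega$-coordinate makes $g^{U,V}$ genuinely $\GG$-adapted; square integrability follows from the Lipschitz bound (C3), the control of the value at the origin provided by (C4)--(C5), and the copy identities $\mathbb E[\int_0^T|U'_t|^2dt]=E[\int_0^T|U_t|^2dt]$ and $\mathbb E[\int_0^T\lVert V'_t\rVert_{\lambda'_t}^2dt]=E[\int_0^T\lVert V_t\rVert_{\lambda_t}^2dt]$, which are instances of \eqref{correspondence}. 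By construction $(Y,Z)$ solves \eqref{GeneralmfBSDE} if and only if it is a fixed point of $\Theta$, and $\Theta$ maps $\mathcal B$ into $\mathcal B$ because its output even has first component in $\Ss^\GG_2$.

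For the contraction I would work with the equivalent weighted norm $\lVert(U,V)\rVert_\beta^2:=E[\int_0^T e^{\beta t}(|U_t|^2+\lVert V_t\rVert_{\lambda_t}^2)dt]$ on $\mathcal B$. Writing $\Delta Y=Y-\bar Y$, $\Delta Z=Z-\bar Z$, $\Delta g=g^{U,V}-g^{\bar U,\bar V}$, one has $\Delta Y_t=E[\xi-\bar\xi\mid\F^\Lambda]+\int_0^t\int_\R\Delta Z_s(z)\,\mu(ds,dz)+\int_0^t\Delta g_s\,ds$ with $\Delta Y_T=0$; applying the Itô formula for $\mu$-integrals to $e^{\beta t}|\Delta Y_t|^2$ (or, if one prefers to avoid it, by solving on short enough subintervals and concatenating) produces the standard a priori bound $\beta\,E[\int_0^T e^{\beta t}|\Delta Y_t|^2dt]+E[\int_0^T e^{\beta t}\lVert\Delta Z_t\rVert_{\lambda_t}^2dt]\le\tfrac{2}{\beta}\,E[\int_0^T e^{\beta t}|\Delta g_t|^2dt]$. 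Combining this with $E[\int_0^T e^{\beta t}|\Delta g_t|^2dt]\le 8K^2\lVert(U-\bar U,V-\bar V)\rVert_\beta^2$ — again (C3) together with the copy identities, now applied to the differences — shows that for $\beta$ large enough $\Theta$ is a strict contraction on $(\mathcal B,\lVert\cdot\rVert_\beta)$. Banach's fixed point theorem then gives a unique fixed point $(Y,Z)\in\mathcal B$, and the first step shows a posteriori that $Y\in\Ss^\GG_2$, which yields existence and uniqueness in $\Ss^\GG_2\times\Ii$.

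The part I expect to be genuinely delicate is the bookkeeping of the mean-field copy structure on $(\Omega^2,\F^\otimes,P^\otimes)$: one must be careful that $g^{U,V}$ is $\GG$-adapted (and not merely $\F\otimes\GG$-adapted) after applying $E'$, that the Lipschitz constant of the driver is not inflated by the primed arguments, and that the passages between $\mathbb E$ on $\Omega^2$ and $E$ on $\Omega$ via $E'[X]=X$ and $E'[X']=E[X]$ are used consistently — this is precisely the point where the assumptions (C2)--(C5) are tailored so that the classical BSDE scheme goes through. Everything else is the usual Picard/weighted-norm argument, transported from the Brownian and Lévy settings of \cite{BLP} to the time-changed martingale random field $\mu$: Brownian/Poisson orthogonality is replaced by the conditional orthogonality of $\mu$ and by $\langle\mu([0,\cdot]\times B)\rangle=\Lambda([0,\cdot]\times B)$, while the integral representation theorem under the filtration $\GG$ is what makes the frozen step work in the first place.
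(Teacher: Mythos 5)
Your proposal is correct and reaches the same conclusion by the same general mechanism (Banach fixed point in the weighted norm $\lVert\cdot\rVert_\beta$, It\^o's formula on $e^{\beta t}|\Delta Y_t|^2$, the Lipschitz condition $(C3)$ combined with the copy identities $E'[X]=X$, $E'[X']=E[X]$), but the decomposition is genuinely different. The paper freezes \emph{only the primed (copy) arguments} at the input $(Y^{(0)},Z^{(0)})$ and solves the resulting equation --- which is still a fully nonlinear, non-mean-field BSDE in $(Y^{(1)},Z^{(1)})$ --- by invoking the existence theorem of Di Nunno--Sjursen for BSDEs driven by time-changed noise; the contraction $\Psi$ then acts only on the mean-field slots. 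You instead freeze \emph{all} arguments, so that the inner equation has a driver independent of the unknowns and can be solved explicitly from the stochastic integral representation theorem under $\GG$ (via $\xi=F-\int_0^T g_s\,ds$, $Y_t=E[\xi\mid\G_t]+\int_0^t g_s\,ds$), and your Picard map contracts over all four slots at once. What your route buys is self-containedness: you only need the representation theorem and Doob's inequality, not the full BSDE existence result, and your a priori estimate is slightly cleaner because the driver difference involves only the \emph{input} increments, so nothing has to be absorbed back into the left-hand side. What the paper's route buys is modularity (the non-mean-field machinery is cited rather than re-derived) and a contraction constant that does not depend on the Lipschitz constant in the unprimed variables. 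Your constants need minor repair --- Young's inequality gives $\tfrac{\beta}{2}$ rather than $\beta$ in front of $E[\int e^{\beta t}|\Delta Y_t|^2dt]$ --- but this does not affect the argument, since the factor $\tfrac{C}{\beta}K^2$ on the right still tends to $0$ as $\beta\to\infty$. The remaining checks you flag (that $g^{U,V}$ lands in $L^2_{pred}(\GG)$ after applying $E'$, using $(C2)$ for adaptedness and $(C3)$--$(C5)$ for square integrability, and that every solution of \eqref{GeneralmfBSDE} in $\Ss^\GG_2\times\Ii\subseteq L^2_{ad}(\GG)\times\Ii$ is a fixed point of $\Theta$) are exactly the right ones and all go through.
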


\begin{proof}
First we study the following BSDE for any given couple $(Y^{(0)},Z^{(0)})\in L^2_{ad}(\GG)\times\Ii$:
\begin{align}\label{RelaxedmfBSDE}
 \begin{cases}
  dY^{(1)}_t&=E'\Big[h\Big(t,\lambda_t,\lambda'_t,Y^{(1)}_t,(Y^{(0)}_t)',Z^{(1)}_t(\cdot),(Z^{(0)}_t(\cdot))'\Big)\Big]dt+\int_\R Z^{(1)}_t(z)\mu(dt,dz)\\
  &=\tilde h(t,\lambda_t,Y^{(1)}_t,Z^{(1)}_t(\cdot))dt+\int_\R Z^{(1)}_t(z)\mu(dt,dz)\\
  Y^{(1)}_T&=F,
 \end{cases}
\end{align}
It is easy to check that, under the assumptions $(C1)-(C5)$, for any fixed input $(Y^{(0)},Z^{(0)})\in L^2_{ad}(\GG)\times\Ii$, \eqref{RelaxedmfBSDE} satisfies the conditions of \cite[Theorem 4.5]{DiS} that yields existence and uniqueness of the solution $(Y^{(1)},Z^{(1)})$  in $\Ss^\GG_2 \times\Ii$. 
Remark that the cited result relies on the stochastic integral representation theorem for the martingale random field $\mu$ under the filtration $\GG$, see \cite[Theorem 3.3]{DiS}.

\vspace{2mm}
Define the mapping 
\begin{equation}
\label{psi}
\Psi: L^2_{ad}(\GG) \times\Ii\longrightarrow L^2_{ad}(\GG) \times\Ii
\end{equation}
$$(Y^{(0)},Z^{(0)})\longmapsto(Y^{(1)},Z^{(1)})$$
where $(Y^{(1)},Z^{(1)})$ is solution to \eqref{RelaxedmfBSDE}. 
If $\Psi$ is a contraction on $L^2_{ad}(\GG) \times\Ii$, then there exists a unique point in $(Y,Z) \in L^2_{ad}(\GG) \times\Ii$ such that $\Psi(Y,Z) = (Y,Z)$, which necessarily belongs to $\Ss^\GG_2\times\Ii$, as discussed above.
Furthermore the fixed point $(Y,Z)$ corresponds to the solution of the original equation \eqref{GeneralmfBSDE}.
Thus, we show that $\Psi$ has a unique fixed point by standard arguments via the Banach's fixed point theorem. This follows standard arguments. The details are in the Appendix.
\end{proof}

\vspace{2mm}
In the case of a linear mean-field BSDE, the set of assumptions guaranteeing existence can be detailed differently.
\begin{corollary}\label{CorollaryGeneralmfBSDE}
 Consider the case of the mean-field BSDE \eqref{GeneralmfBSDE} where $h:[0,T]\times\R^2\times\R^2\times\ \big(L^2(\delta_0+\nu)\big)^2\times \Omega^2 \longrightarrow\R$ has linear form:
\begin{align}\label{driverGeneralmfBSDE}
\begin{split}
 h(t,l,l',y,y',z(\cdot),z'(\cdot),\tilde\omega,\omega)&=A_t(\tilde\omega,\omega)+B_t(\omega)y+C_t(\tilde\omega,\omega)y'\\
 &\quad+D_t(0,\omega)z(0)l^{(1)}+E_t(0,\tilde\omega,\omega)z'(0)(l^{(1)})'\\
 &\quad+\int\limits_{\R_0}D_t(\xi,\omega)z(\xi)l^{(2)}+E_t(\xi,\tilde\omega,\omega)z'(\xi)(l^{(2)})'\nu(d\xi),
\end{split}
\end{align}
where $l=(l^{(1)},l^{(2)})$, $l'=((l^{(1)})',(l^{(2)})')$. 
Assume
 \begin{itemize}
  \item[(C1')] $F\in L^2(\Omega,\F,P)$ $\F_T$-measurable
  \item[(C2')] $A_\cdot,\,C_\cdot,\,E_\cdot(\xi)$ are $\F\otimes\GG$-adapted and $B_\cdot,\,D_\cdot(\xi)$ are $\GG$-adapted for all $\xi\in\R$.
  \item[(C3')] $B_\cdot,\,C_\cdot,\,D_\cdot(0)\sqrt{\lambda^B_\cdot},\,E_\cdot(0)\sqrt{(\lambda^B)'_\cdot},\int_{\R_0}|D_\cdot(\xi)|^2\lambda^H_\cdot\nu(d\xi)$ and $\int_{\R_0}|E_\cdot(\xi)|^2(\lambda^H)'_\cdot\nu(d\xi)$ are bounded.
  \item[(C4')] $A\in L^2_{pred}(\F\otimes\GG)$.
  \item[(C5')] $E'[A_\cdot+C_\cdot (Y^{(0)}_\cdot)'+E_\cdot(0)(Z^{(0)}_\cdot(0))'+\int_{\R_0}E_\cdot(\xi)(Z^{(0)}_\cdot(\xi))'\nu(d\xi)]\in L^2_{pred}(\GG)$ for all $(Y^{(0)},Z^{(0)})\in L^2_{ad}(\GG) \times\Ii$.
 \end{itemize}
 Then there exists a solution in $ \Ss^\GG_2 \times \Ii$ to the linear mean-filed BSDE.
\end{corollary}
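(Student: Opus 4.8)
The plan is to verify that the linear driver \eqref{driverGeneralmfBSDE} is a special case of the general driver $h$ treated in Theorem \ref{solutionBSDE}, so that existence and uniqueness follow by checking the hypotheses $(C1)-(C5)$ against the hypotheses $(C1')-(C5')$. First I would record the correspondence of data: set $h(t,l,l',y,y',z(\cdot),z'(\cdot),\tilde\omega,\omega)$ equal to the right-hand side of \eqref{driverGeneralmfBSDE}, with $\lambda_t=(\lambda^B_t,\lambda^H_t)$ playing the role of $l$ and $\lambda'_t=((\lambda^B)'_t,(\lambda^H)'_t)$ the role of $l'$. Then $(C1')\Rightarrow(C1)$ is immediate since $\F_T\subseteq\G_T$. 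Condition $(C2)$ follows from $(C2')$: the process $h(t,\lambda_t,\lambda'_t,Y_{1,t},Y_{2,t}',Z_{1,t}(\cdot),Z_{2,t}(\cdot)')$ is a sum and an integral of products of $\F\otimes\GG$-adapted coefficients ($A,C,E(\xi)$ together with the $(\cdot)'$-lifted primed data) and $\GG$-adapted coefficients ($B,D(\xi)$) with $\F\otimes\GG$-adapted processes $Y_{1},Z_{1}$ and their primed copies, hence $\F\otimes\GG$-adapted; one should note that $\xi\longmapsto D_t(\xi)$, $E_t(\xi)$ integrated against $\nu$ produces again an adapted process by a standard monotone-class argument.

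Next I would verify the Lipschitz condition $(C3)$ from the boundedness condition $(C3')$. Subtracting the driver evaluated at $(y_1,y_1',z_1,z_1')$ and at $(y_2,y_2',z_2,z_2')$, the linear structure gives
\begin{align*}
&|h(t,\lambda_t,\lambda'_t,y_1,y_1',z_1,z_1',\tilde\omega,\omega)-h(t,\lambda_t,\lambda'_t,y_2,y_2',z_2,z_2',\tilde\omega,\omega)|\\
&\leq |B_t||y_1-y_2|+|C_t||y_1'-y_2'|+|D_t(0)|\sqrt{\lambda^B_t}\,\frac{|z_1(0)-z_2(0)|\sqrt{\lambda^B_t}}{1}\\
&\quad+|E_t(0)|\sqrt{(\lambda^B)'_t}\,|z_1'(0)-z_2'(0)|\sqrt{(\lambda^B)'_t}+\Big|\int_{\R_0}D_t(\xi)(z_1(\xi)-z_2(\xi))\lambda^H_t\nu(d\xi)\Big|\\
&\quad+\Big|\int_{\R_0}E_t(\xi)(z_1'(\xi)-z_2'(\xi))(\lambda^H)'_t\nu(d\xi)\Big|,
\end{align*}
and I would bound each integral term by Cauchy--Schwarz in $L^2(\nu)$: for instance $\big|\int_{\R_0}D_t(\xi)(z_1(\xi)-z_2(\xi))\lambda^H_t\nu(d\xi)\big|\le\big(\int_{\R_0}|D_t(\xi)|^2\lambda^H_t\nu(d\xi)\big)^{1/2}\big(\int_{\R_0}|z_1(\xi)-z_2(\xi)|^2\lambda^H_t\nu(d\xi)\big)^{1/2}$, the second factor being $\le\lVert z_1-z_2\rVert_{\lambda_t}$. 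Since by $(C3')$ the quantities $B$, $C$, $D(0)\sqrt{\lambda^B}$, $E(0)\sqrt{(\lambda^B)'}$, $\int_{\R_0}|D(\xi)|^2\lambda^H\nu(d\xi)$, $\int_{\R_0}|E(\xi)|^2(\lambda^H)'\nu(d\xi)$ are all bounded by a common constant, we obtain $(C3)$ with $K$ a suitable multiple of that bound. Condition $(C4)$ is verified by evaluating the driver at $y=y'=0$, $z=z'=0$, which leaves exactly $A_t$, so $(C4')$ gives $A\in L^2_{pred}(\F\otimes\GG)$, i.e. $(C4)$. Finally $(C5)$: computing $\tilde h(t,\lambda_t,0,0)=E'[h(t,\lambda_t,\lambda'_t,0,Y^{(0)'}_t,0,Z^{(0)'}_t(\cdot))]=E'[A_t+C_t(Y^{(0)}_t)'+E_t(0)(Z^{(0)}_t(0))'+\int_{\R_0}E_t(\xi)(Z^{(0)}_t(\xi))'\nu(d\xi)]$, which is precisely the process required to lie in $L^2_{pred}(\GG)$ by $(C5')$.

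Having checked $(C1)-(C5)$, Theorem \ref{solutionBSDE} applies and yields a unique $\GG$-adapted solution $(Y,Z)\in\Ss^\GG_2\times\Ii$ to the linear mean-field BSDE, which proves the corollary. The main obstacle I anticipate is purely bookkeeping rather than conceptual: one must be careful that the integral-in-$\xi$ terms of the driver genuinely define functions in $L^2(\delta_0+\nu)$ acting on their arguments and that the resulting process is measurable with respect to the correct product filtration $\F\otimes\GG$; a monotone-class / approximation-by-simple-functions argument handles the measurability, and the Cauchy--Schwarz estimates above, combined with the boundedness assumptions $(C3')$, handle the integrability. Beyond that, the reduction to Theorem \ref{solutionBSDE} is direct and requires no new ideas.
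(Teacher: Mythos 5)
Your proposal is correct and follows exactly the route the paper intends: the corollary is stated without proof as a direct consequence of Theorem \ref{solutionBSDE}, obtained by checking $(C1)$--$(C5)$ against $(C1')$--$(C5')$, which is precisely what you carry out (the Cauchy--Schwarz bound on the $\nu$-integral terms and the identification of $\tilde h(t,\lambda_t,0,0)$ with the process in $(C5')$ being the only nontrivial steps). Your conclusion even yields uniqueness in $\Ss^\GG_2\times\Ii$, which is slightly stronger than the existence claimed in the corollary.
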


%%%%%%%%%%%%%%%
%%%%%%%%%%%%%%%
%%%%%%%%%%%%%%%
%%%%%%%%%%%%%%%
%%%%%%%%%%%%%%%
\section{The mean-field stochastic control problem}

Let us consider the controlled stochastic process described by the following mean-field SDE: 
\begin{align}\label{ControlledDynamics}
 X^u_t=x+\int\limits_0^tb(s,\lambda_s,X^u_{s-},E[X^u_s],u_s)ds+\int\limits_0^t\int\limits_\R\kappa(s,z,\lambda_s,X^u_{s-},E[X^u_s],u_s)\mu(ds,dz),
\end{align}
where $u=(u_t)_{t\in[0,T]}$ denotes the control variable. Here,
\begin{align*}
 b&:[0,T]\times\R_+\times\R\times\R\times\R\times\Omega\longmapsto\R\\
 \kappa&:[0,T]\times\R\times\R_+\times\R\times\R\times\R\times\Omega\longmapsto\R.
\end{align*}
The dynamics \eqref{ControlledDynamics} are a special case of \eqref{mfSDE}.
Hereafter we reformulate and specify the assumptions $(E1)-(E3)$ to fit the present study.
From now on we shall assume the following conditions on the coefficients $b$ and $\kappa$ to hold. \begin{assumptions}\label{MainAssumptionsForwardDynamics}\hspace{10cm}
 \begin{itemize}
  \item[(E1')] $b$ and $\kappa$ can be decomposed as follows:
  \begin{align*}
   b(s,\lambda,x,y,u,\omega)&=b_0(\omega,s,\lambda)\cdot b_1(s,\lambda,x,y,u)+b_2(\omega,s,\lambda)\\
   \kappa(s,z,\lambda,x,y,u,\omega)&=\kappa_0(\omega,s,z,\lambda)\cdot\kappa_1(s,z,\lambda,x,y,u)+\kappa_2(\omega,s,z,\lambda),
  \end{align*}
 where $b_0$, $b_2$, $\kappa_0$, $\kappa_2$ are such that for $i=0,2$
 \begin{align*}
  &(\omega,s,z)\longmapsto b_i(\omega,s,\lambda_s(\omega)),\,(\omega,s,z)\longmapsto\kappa_i(\omega,s,z,\lambda_s(\omega))
 \end{align*}
 are $\FF$-predictable and $b_1$ and $\kappa_1$ are $C^1$ in $(s,z,\lambda,x,y,u)$.
  \item[(E2')] There exist the deterministic constants $0\leq K,L<\infty$ such that for $\partial_i b$ and $\partial_i\kappa$, $i=x,y,u$, the following boundedness and Lipschitzianity conditions hold $\text{Leb}([0,T]\times\R^3)\otimes P$-a.e. 
  \begin{align}
    &|\partial_i b(s,\lambda_s(\omega),x,y,u,\omega)|+\lVert\partial_i \kappa(s,\cdot,\lambda_s(\omega),x,y,u,\omega)\rVert_{\lambda_s}<K \label{boundedness}\\
    &|\partial_i b(s,\lambda,x_1,y_1,u_1,\omega)-\partial_i b(s,\lambda,x_2,y_2,u_2,\omega)|\leq L(|x_1-x_2|+|y_1-y_2|+|u_1-u_2|)\\
    &\lVert\partial_i\kappa(s,\lambda_s(\omega),x_1,y_1,u_1,\omega)-\partial_i\kappa(s,\lambda_s(\omega),x_2,y_2,u_2,\omega)\rVert_{\lambda_s} \notag
    \\
    &\hspace{6cm} \leq L(|x_1-x_2|+|y_1-y_2|+|u_1-u_2|)
   \end{align}
  \item[(E3')] $E\Big[\int_0^T|b(s,\lambda_s,0,0,0)|^2+\lVert\kappa(s,\cdot,\lambda_s,0,0,0)\rVert^2_{\lambda_s}ds\Big]<\infty$.
 \end{itemize}
\end{assumptions}

\vspace{2mm}
We introduce the space
 $\Hh^{\FF}$ of $\FF$-predictable processes in such that $\Vert Y\Vert_{\Ss_2}^2:= E[\sup_{s\leq T}|Y_s|^2]<\infty$.

\begin{lemma}\label{ExistenceAndUniquenessSDEForOurOptimizationProblem}
 Let $u\in\Hh^\FF$. Then the SDE \eqref{ControlledDynamics} has a unique solution in 
 $\Ss^\FF_2$.
 %$L^2(\Omega,\DD)$.
\end{lemma}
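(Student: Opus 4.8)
The plan is to recognise the controlled dynamics \eqref{ControlledDynamics} as a particular instance of the mean-field SDE \eqref{mfSDE} and to conclude by Theorem \ref{ExistenceAndUniquenessmfSDE}. Fixing $u\in\Hh^\FF$, I would write, for $\Yy\in M_2(\R)$, $m(\Yy):=\int_\R r\,\Yy(dr)$ and define
\begin{align*}
 \tilde b(s,x,\Yy,\omega):=b\big(s,\lambda_s(\omega),x,m(\Yy),u_s(\omega),\omega\big),\qquad
 \tilde\kappa(s,z,x,\Yy,\omega):=\kappa\big(s,z,\lambda_s(\omega),x,m(\Yy),u_s(\omega),\omega\big).
\end{align*}
Since $m(\Ll_{X_s})=E[X_s]$ whenever $X_s$ is integrable, the mean-field SDE \eqref{mfSDE} with coefficients $\tilde b,\tilde\kappa$ coincides with \eqref{ControlledDynamics}. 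Hence it suffices to verify that $\tilde b$ and $\tilde\kappa$ satisfy the assumptions $(E1)-(E3)$ and then quote Theorem \ref{ExistenceAndUniquenessmfSDE}.

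For $(E1)$ I would first observe that $\Yy\longmapsto m(\Yy)$ is $d_\R$-Lipschitz, hence Borel measurable on $(M_2(\R),d_\R)$: for any $R\in M_0(\R^2)$ with marginals $\Yy_1,\Yy_2$, Cauchy--Schwarz gives $|m(\Yy_1)-m(\Yy_2)|=\big|\int_{\R^2}(v-w)R(dv,dw)\big|\le\big(\int_{\R^2}|v-w|^2R(dv,dw)\big)^{1/2}$, and taking the infimum over such $R$ yields $|m(\Yy_1)-m(\Yy_2)|\le d_\R(\Yy_1,\Yy_2)$. Composing this map with the $\PF$-measurable map $(s,\omega)\mapsto\lambda_s(\omega)$ and with the $\FF$-predictable control $(s,\omega)\mapsto u_s(\omega)$ (predictable since $u\in\Hh^\FF$), and using the joint measurability assumed in $(E1')$, produces the required $\PF\otimes\B(\R)\otimes\B(M_2(\R))$-measurability of $\tilde b$ and $\tilde\kappa$.

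For $(E2)$, by $(E1')$ the maps $(x,y)\mapsto b(s,\lambda_s(\omega),x,y,u_s(\omega),\omega)$ and $(x,y)\mapsto\kappa(s,\cdot,\lambda_s(\omega),x,y,u_s(\omega),\omega)$ are $C^1$, and by the boundedness part of $(E2')$ their partial derivatives in $x$ and $y$ are bounded by $K$ in the relevant (semi)norms; the mean value theorem then yields a Lipschitz bound in $(x,m(\Yy))$ with constant $2K$, which combined with the Wasserstein estimate above gives the global Lipschitz property $(E2)$ with the deterministic constant $C=2K$. For $(E3)$, since $m(\delta_0)=0$ we have $\tilde b(s,0,\delta_0,\omega)=b(s,\lambda_s(\omega),0,0,u_s(\omega),\omega)$, and using once more the boundedness of $\partial_u b$ and $\partial_u\kappa$ one gets $|\tilde b(s,0,\delta_0)|^2\le 2|b(s,\lambda_s,0,0,0)|^2+2K^2|u_s|^2$ (and the analogue for $\lVert\tilde\kappa(s,\cdot,0,\delta_0)\rVert^2_{\lambda_s}$); integrating and invoking $(E3')$ together with $E\big[\int_0^T|u_s|^2ds\big]\le T\,E\big[\sup_{s\le T}|u_s|^2\big]<\infty$ (valid because $u\in\Hh^\FF$) gives $(E3)$.

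With $(E1)-(E3)$ verified for $\tilde b,\tilde\kappa$, Theorem \ref{ExistenceAndUniquenessmfSDE} yields existence and uniqueness of a non-exploding c\`adl\`ag solution in $\Ss^\FF_2$, which is exactly the claim. There is no serious obstacle here; the only points needing a little care are the measurability bookkeeping in $(E1)$ and checking that the boundedness constants in $(E2')$, stated along the realised path $\lambda_s(\omega)$, are precisely what is needed to obtain the deterministic-constant Lipschitz and integrability bounds demanded by $(E1)-(E3)$.
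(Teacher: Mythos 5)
Your proposal is correct and follows essentially the same route as the paper: recast \eqref{ControlledDynamics} as an instance of \eqref{mfSDE} with coefficients evaluated at $\langle\id,\Yy\rangle=\int_\R r\,\Yy(dr)$, verify $(E1)$--$(E3)$ from $(E1')$--$(E3')$ via the mean value theorem and the boundedness of the partial derivatives, and invoke Theorem \ref{ExistenceAndUniquenessmfSDE}. The only (harmless) differences are that you bound $|\langle\id,\Yy_1-\Yy_2\rangle|\le d_\R(\Yy_1,\Yy_2)$ directly by Cauchy--Schwarz on couplings where the paper cites Kantorovich--Rubinstein, and you spell out the $(E3)$ step (absorbing the $u_s$-dependence via the boundedness of $\partial_u b$, $\partial_u\kappa$ and $u\in\Hh^\FF$) more explicitly than the paper does.
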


\begin{proof}
 Define the random functions $b^{(\lambda,u)}:[0,T]\times\R\times M_2(\R)\times\Omega\rightarrow\R$, $\kappa^{(\lambda,u)}:[0,T]\times\R\times\R\times M_2(\R)\times\Omega\rightarrow\R$
 \begin{align*}
  b^{(\lambda,u)}(s,x,\Yy,\omega)&:=b(s,\lambda_s(\omega),x,\langle\id,\Yy\rangle,u_s(\omega))\\
  \kappa^{(\lambda,u)}(s,z,x,\Yy,\omega)&=\kappa(s,z,\lambda_s(\omega),x,\langle\id,\Yy\rangle,u_s(\omega)),
 \end{align*}
where $\langle\alpha,\Yy\rangle=\int_\R\alpha(a)\Yy(da)$. We verify that assumptions $(E1)-(E3)$ hold and apply Theorem \ref{ExistenceAndUniquenessmfSDE} to conclude. 
Observe that the particular structure of $b$ and $\kappa$ given in $(E1')$ implies $(E1)$. 
As for $(E2)$, we check the Lipschitzianity for $\kappa^{(\lambda,u)}$ only as the same argument can be applied to $b^{(\lambda,u)}$. 
By condition $(E1')$, the function $(x,y)\longmapsto\kappa(s,z,\lambda_s(\omega),x,y,u)$ is $C^1$. Then applying the generalisation of the mean value theorem for functions in several variables, there exists $\alpha=\alpha(\omega)\in[0,1]$, $\omega\in \Omega$, such that
\begin{align*}
 &\kappa(s,\cdot,\lambda_s(\omega),x_1,\langle\id,\Yy_1\rangle,u_s(\omega))-\kappa(s,\cdot,\lambda_s(\omega),x_2,\langle\id,\Yy_2\rangle,u_s(\omega))\\
 &=\Big\langle\nabla_{x,y}\kappa(s,\cdot,\lambda_s(\omega),\alpha(\omega) x_1+(1-\alpha(\omega))x_2,\langle\id,\alpha(\omega) \Yy_1+(1-\alpha(\omega))\Yy_2\rangle,u_s(\omega)),\begin{pmatrix}
            x_1-x_2\\                                                                                                                                                                                                                                    
            \langle\id,\Yy_1-\Yy_2\rangle                                                                                                                                                                                                                                   \end{pmatrix}\Big\rangle.
\end{align*}
This, together with Cauchy-Schwarz's inequality and the definition of $\lVert\cdot\rVert_{\lambda_s}$ yields
\begin{align*}
&\lVert\kappa^{(\lambda,u)}(s,z,x_1,\Yy_1,\omega)-\kappa^{(\lambda,u)}(s,z,x_2,\Yy_2,\omega)\rVert_{\lambda_s}\\
&\leq\lVert\partial_x\kappa(s,\cdot,\lambda_s(\omega),\tilde x(\omega),\langle\id,\tilde \Yy(\omega)\rangle,u_s(\omega))\rVert_{\lambda_s}\cdot|x_1-x_2|\\
&\quad+\lVert\partial_y\kappa(s,\cdot,\lambda_s(\omega),\tilde x(\omega),\langle\id,\tilde \Yy(\omega)\rangle,u_s(\omega))\rVert_{\lambda_s}\cdot|\langle\id,\Yy_1-\Yy_2\rangle|,
\end{align*}
where $\tilde x(\omega)=\alpha(\omega) x_1+(1-\alpha(\omega))x_2$ and $\tilde \Yy(\omega)=\alpha(\omega) \Yy_1+(1-\alpha(\omega))\Yy_2$. Moreover, the boundedness \eqref{boundedness} of the partial derivatives from $(E2')$ implies
\begin{align*}
\lVert\kappa^{(\lambda,u)}(s,z,x_1,\Yy_1,\omega)-\kappa^{(\lambda,u)}(s,z,x_2,\Yy_2,\omega)\rVert_{\lambda_s}\leq K(|x_1-x_2|+|\langle\id,\Yy_1-\Yy_2\rangle|).
\end{align*}
The Lipschitzianity of the identity and Kantorovich-Rubinstein's theorem give $(E2)$:
\begin{align*}
\lVert\kappa^{(\lambda,u)}(s,z,x_1,\Yy_1,\omega)-\kappa^{(\lambda,u)}(s,z,x_2,\Yy_2,\omega)\rVert_{\lambda_s}&\leq K_1(|x_1-x_2|+d_{\R}(\Yy_1,\Yy_2)).
\end{align*}
Finally, $(E3')$ and the $(E2)$ just proved imply $(E3)$.
\end{proof}

\vspace{3mm}
In the sequel we study the optimal control problem
\begin{equation} \label{OP}
J(\hat u) = \sup_{u \in \mathcal{A}} J(u)
\end{equation}
with objective functional
\vspace{-3mm}
\begin{align}\label{Objective}
 J(u):=E\Big[\int\limits_0^Tf(s,\lambda_s,X^u_{s-},E[\varphi(X^u_{s})],u_s)ds+g(X^u_{T},E[\chi(X^u_T)])\Big]
\end{align}
for the dynamics \eqref{ControlledDynamics} and on a class of admissible controls $\mathcal{A}$ characterised below.

The objective function $J$ is subject to the following assumptions.
\begin{assumptions}\label{MainAssumptions}\hspace{10cm}
 \begin{itemize}
  \item[(O1)] For all $(s,z,\omega)\in[0,T]\times\R\times\Omega$:
  \begin{align*}
   &(x,y,u)\longmapsto f(s,\lambda_s(\omega),x,y,u)\in C^1(\R^3)\\
   &(x,y)\longmapsto g(x,y)\in C^1(\R^2)\\
   &\varphi,\,\chi\in C^1(\R).
  \end{align*}
  \item[(O2)] $g$, $\varphi$, $\chi$ are concave.
  \item[(O3)] $\partial_x\varphi$ and $\partial_x\chi$ are Lipschitz.
  \item[(O4)] It holds
   \begin{itemize}
   \item either $\varphi$ is affine or $\partial_yf(s,\lambda_s(\omega),x,y,u)\geq0$ for all $(s,z,x,y,u,\omega)\in[0,T]\times\R\times\R\times\R\times\R\times\Omega$.
   \item either $\chi$ is affine or $\partial_yg(x,y)\geq0$ for all $(x,y)\in\R\times\R$.
   \end{itemize}
  \item[(O5)] $g$ is such that for all $X\in L^2(\Omega,\F,P)$ and all $y\in\R$:
  \begin{align*}
   &\partial_xg(X,y)\in L^2(\Omega,\F,P)\text{ and }\partial_yg(X,y)\in L^1(\Omega,\F,P).
  \end{align*}
 \end{itemize}
\end{assumptions}

Hereafter we characterise the admissible strategies.
\begin{definition}\label{Admissibility}
 Let $U\subseteq\R$ be a convex set. A stochastic process $u\in\Hh^{\FF}$ with values in $U$ is called an admissible strategy if the following conditions are satisfied
 \begin{itemize}
  \item[(A1)] The objective $J(u)$ is well defined for $u$, i.e.
  \begin{align*}
   s\longmapsto f(s,\lambda_s,X^u_{s-},E[\varphi(X^u_{s})],u_s)\in L^1(\Omega\times[0,T],\F\otimes\B([0,T]),P\otimes\text{Leb})
  \end{align*}
  and
  \vspace{-3mm}
  \begin{align*}
   g(X^u_{T},E[\chi(X^u_{T})])\in L^1(\Omega,\F,P).
  \end{align*}
  \item[(A2)] For $i=x,y,u$, the stochastic processes 
  $\partial_if(s,\lambda_s,X^u_{s-},E[\varphi(X^u_{s})],u_s)$,
  $(s,\omega) \in [0,T]\times \Omega$,
  are elements of $L^2([0,T]\times\Omega,\B([0,T])\otimes\F, \text{Leb}\otimes P)$.
  For $i=x,y$, the random variables
  $ \partial_i g(X^u_{T},E[\chi(X^u_{T})])$, $\omega\in\Omega$ 
  belong to $L^2(\Omega,\F,P)$.
  \end{itemize}
 The set of admissible strategies is denoted by $\A$.
\end{definition}

%%%%%%%%%%
%%%%%%%%%%
The presence of the mean-field terms makes the optimal control problem \eqref{OP} inhomogeneous in the sense that it does not satisfy the Bellman principle. 
We study the problem \eqref{OP} via the stochastic maximum principle and we suggest a sufficient and a necessary result. 
For these we shall work with the Hamiltonian function in which the solution of the adjoint equation appears. 
In the context of this paper the adjoint equation is a mean-field BSDE driven by time changed L\'evy noises.

In order to make things more readable, we introduce the following short-hand notation
\begin{align*}
 &\hat X_t:=X^{\hat u}_t,\\
 &\hat\varphi_t:=\varphi(\hat X_{t}),\,\hat\chi_T:=\chi(\hat X_T)\\
 &b_t:=b(t,\lambda_t,X^u_{t-},E[X^u_{t}],u_t),\,\kappa_t(\cdot),\,f_t,\,g_T\text{ accordingly},\\
 &\hat b_t:=b(t,\lambda_t,\hat X_{t-},E[\hat X_t],\hat u_t),\,\hat\kappa_t(\cdot),\,\hat f_t,\,\hat g_T\text{ accordingly}.
 \end{align*}
The adjoint equation has the form below:
\begin{align}
\begin{split}\label{adjointeq}
 d\hat p_t&=-\Big\{\partial_x\hat f_t+\partial_x\hat b_t\cdot\hat p_t+\partial_x\hat\kappa_t(0)\hat q_t(0)\lambda^B_t+\int\limits_{\R_0}\partial_x\hat\kappa_t(z)\hat q_t(z)\lambda^H_t\nu(dz)+E[\partial_y\hat f_t]\partial_x\hat\varphi_t\\
 &\quad+E[\partial_y\hat b_t\cdot\hat p_t]+E\Big[\partial_y\hat\kappa_t(0)\hat q_t(0)\lambda^B_t+\int\limits_{\R_0}\partial_y\hat\kappa_t(z)\hat q_t(z)\lambda^H_t\nu(dz)\Big]\Big\}dt\\
 &\quad+\int\limits_\R\hat q_t(z)\mu(dt,dz)\\
 \hat p_T&=\partial_x\hat g_T+E[\partial_y\hat g_T]\partial_x\hat\chi_T
\end{split}
\end{align}
\begin{remark}
 It follows again from Doob's regularisation theorem (Theorem 6.27 in \cite{K}) that we can replace the $\hat p_{t-}$ by $\hat p_t$ inside any integral w.r.t. $dt$ if either of the two versions of the BSDE has a solution. The same applies to $X_{t-}$ and $X_t$. We will apply this regularly in the next sections without additional notice.
\end{remark}

To make sense of a solution to \eqref{adjointeq} we embed the equation in the theory of Section 4.
Notice that there the analysis is carried through under filtration $\GG$. 
Indeed it is under $\GG$ that an appropriate stochastic integral representation theorem is provided.
However, the stochastic control problem \eqref{OP} we are facing is given under the information flow $\FF$, which is more reasonable from a modelling perspective.
We shall deal with this form of "partial" information in the sequel.

\begin{lemma}
 Let $\hat u\in\A$. Then the adjoint equation \eqref{adjointeq} has a unique solution in $S^\GG_2\times\mathcal{I}$.
 \end{lemma}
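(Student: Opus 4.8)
The plan is to cast the adjoint equation \eqref{adjointeq} as a special case of the linear mean-field BSDE \eqref{GeneralmfBSDE} with driver \eqref{driverGeneralmfBSDE}, and then verify the hypotheses (C1')--(C5') of Corollary \ref{CorollaryGeneralmfBSDE}, which yields existence of a solution in $\Ss^\GG_2\times\Ii$; uniqueness then follows from Theorem \ref{solutionBSDE}. First I would set the terminal condition $F:=\partial_x\hat g_T+E[\partial_y\hat g_T]\partial_x\hat\chi_T$ and identify the coefficients: reading off \eqref{adjointeq}, one takes $B_t:=\partial_x\hat b_t$, $C_t:=\partial_y\hat b_t$, $D_t(z):=\partial_x\hat\kappa_t(z)$, $E_t(z):=\partial_y\hat\kappa_t(z)$, while the ``free'' term collects everything not multiplying $\hat p$ or $\hat q$, namely $A_t:=\partial_x\hat f_t+E[\partial_y\hat f_t]\partial_x\hat\varphi_t$. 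Note that $E[\partial_y\hat f_t]$ and $E[\partial_y\hat g_T]$ are written in the $\mathbb{E}$/$E'$-formalism via the independent copies, so that $E[\partial_y\hat f_t]\partial_x\hat\varphi_t=E'[(\partial_y\hat f_t)'\,\partial_x\hat\varphi_t]$, matching the $E'$-structure of \eqref{GeneralmfBSDE}; the same rewriting handles the $E[\partial_y\hat b_t\cdot\hat p_t]$ and $E[\partial_y\hat\kappa_t(z)\hat q_t(z)\lambda^H_t]$ terms inside the driver. Here one uses the parameters $l^{(1)}=\lambda^B_t$, $l^{(2)}=\lambda^H_t$ of \eqref{driverGeneralmfBSDE}.

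The bulk of the proof is then the verification, in order: (C1') demands $F\in L^2(\Omega,\F,P)$, which follows from $(O5)$ (giving $\partial_x\hat g_T\in L^2$ and $\partial_y\hat g_T\in L^1$) combined with $(O3)$ and the boundedness of $\partial_x\hat\chi_T$ implied by $(O1)$ and $(O3)$ (Lipschitz $\partial_x\chi$ gives linear growth, but since $\hat X_T\in L^2$ we get $\partial_x\hat\chi_T\in L^2$, and more care is needed --- see below --- to get the product into $L^2$); $\F_T$-measurability is clear as $\hat X_T$ is $\F_T$-measurable and $E[\partial_y\hat g_T]$ is deterministic. (C2') asks that $A,C,E_\cdot(\xi)$ be $\F\otimes\GG$-adapted and $B,D_\cdot(\xi)$ be $\GG$-adapted; this is immediate from $(E1')$ (the coefficients $b_0,b_2,\kappa_0,\kappa_2$ are $\FF$-predictable, hence $\GG$-predictable, and $\hat X$ is $\FF$-adapted by Lemma \ref{ExistenceAndUniquenessSDEForOurOptimizationProblem}), together with the fact that the $E[\cdot]$-terms in $A$ live on the copy space $\Omega^2$, hence $\F\otimes\GG$-adapted. (C3') requires boundedness of $B_\cdot$, $C_\cdot$, $D_\cdot(0)\sqrt{\lambda^B_\cdot}$, $E_\cdot(0)\sqrt{(\lambda^B)'_\cdot}$, $\int_{\R_0}|D_\cdot(\xi)|^2\lambda^H_\cdot\nu(d\xi)$ and its copy: all of these are exactly the boundedness \eqref{boundedness} of $(E2')$ applied to $\partial_x b,\partial_y b,\partial_x\kappa,\partial_y\kappa$, noting $\lVert\partial_i\hat\kappa_t(\cdot)\rVert_{\lambda_t}^2=|\partial_i\hat\kappa_t(0)|^2\lambda^B_t+\int_{\R_0}|\partial_i\hat\kappa_t(\xi)|^2\lambda^H_t\nu(d\xi)<K^2$ gives both the $D(0)\sqrt{\lambda^B}$ part and the integral part simultaneously. (C4') asks $A\in L^2_{pred}(\F\otimes\GG)$, i.e. $\mathbb{E}[\int_0^T|\partial_x\hat f_s+E[\partial_y\hat f_s]\partial_x\hat\varphi_s|^2ds]<\infty$: the first summand is in $L^2([0,T]\times\Omega)$ by $(A2)$, and for the second one combines $(A2)$ (giving $\partial_y\hat f_s\in L^2$, hence $E[\partial_y\hat f_s]$ finite and square-integrable in $s$) with the fact that $\partial_x\hat\varphi_s$ has at most linear growth in $\hat X_s$ by $(O3)$ so that $E[\sup_s|\partial_x\hat\varphi_s|^2]<\infty$ because $\hat X\in\Ss^\FF_2$. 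Finally (C5') is the statement that the $E'$-averaged free part lies in $L^2_{pred}(\GG)$ for every input $(Y^{(0)},Z^{(0)})\in L^2_{ad}(\GG)\times\Ii$; this follows from (C3') (boundedness of $C,E$), (C4'), and the fact that taking conditional-copy-expectations does not increase the $L^2$-norm, together with $Y^{(0)}\in L^2_{ad}(\GG)$ and $Z^{(0)}\in\Ii$ having the requisite integrability.

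The main technical obstacle I anticipate is the careful handling of the products of the form (linear-growth function of $\hat X_T$) $\times$ (expectation term), in particular in (C1'): to conclude $F\in L^2$ one needs both $\partial_x\hat g_T\in L^2$ and $E[\partial_y\hat g_T]\partial_x\hat\chi_T\in L^2$, and while $(O5)$ only gives $\partial_y\hat g_T\in L^1$, the deterministic constant $E[\partial_y\hat g_T]$ is finite, so the product reduces to $\partial_x\hat\chi_T$, which by Lipschitzianity of $\partial_x\chi$ ($(O3)$) has linear growth and hence lies in $L^2$ since $\hat X_T\in L^2$. A second point requiring attention is confirming that the $E'$-reformulation of \eqref{adjointeq} genuinely produces a driver of the linear form \eqref{driverGeneralmfBSDE} with the stated $l$-dependence --- i.e. that the $\lambda^B_t$ and $\lambda^H_t$ weights on the $\hat q$-terms are correctly absorbed into the $l^{(1)},l^{(2)}$ slots --- but this is bookkeeping rather than a real difficulty. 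Once the structure is matched and (C1')--(C5') are checked, Corollary \ref{CorollaryGeneralmfBSDE} delivers a solution in $\Ss^\GG_2\times\Ii$ and Theorem \ref{solutionBSDE} (whose hypotheses (C1)--(C5) are implied by the linear-case conditions, as is standard) gives uniqueness, completing the proof.
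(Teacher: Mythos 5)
Your proposal is correct and takes essentially the same approach as the paper: the paper likewise rewrites \eqref{adjointeq} as a linear mean-field BSDE with $A_t=\partial_x\hat f_t+(\partial_y\hat f_t)'\partial_x\hat\varphi_t$, $B_t=\partial_x\hat b_t$, $C_t=(\partial_y\hat b_t)'$, $D_t(\cdot)=\partial_x\hat\kappa_t(\cdot)$, $E_t(\cdot)=(\partial_y\hat\kappa_t(\cdot))'$, $F=\partial_x\hat g_T+E[\partial_y\hat g_T]\partial_x\hat\chi_T$ and invokes Corollary \ref{CorollaryGeneralmfBSDE}, omitting the verification of $(C1')$--$(C5')$ that you spell out. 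Just note that $C_t$, $E_t$ and the second summand of $A_t$ must carry the independent-copy prime (as your $E'$-rewriting remark indicates), so that $E'[C_t\,\hat p_t']=E[\partial_y\hat b_t\cdot\hat p_t]$ rather than $\partial_y\hat b_t\cdot E[\hat p_t]$.
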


\begin{proof}
In the notation of Section 4, by the relationship \eqref{correspondence}, we can rewrite the adjoint equation \eqref{adjointeq} as
\begin{align*}
 \begin{split}
  d\hat p_t&=E'\Big[A_t+B_t\cdot\hat p_{t}+C_t\cdot\hat p_{t}'+D_t(0)\hat q_t(0)\lambda^B_t+E_t(0)\hat q_t'(0)(\lambda^B_t)'\\
  &\quad+\int\limits_{\R_0}D_t(z)\hat q_t(z)\lambda^H_t+E_t(z)\hat q_t'(z)(\lambda^H_t)'\nu(dz)\Big]dt\\
  &\quad+\int\limits_\R\hat q_t(z)\mu(dt,dz),\\
  \hat p_T&=F
 \end{split} 
\end{align*}
where
\vspace{-2mm}
\begin{align*}
 A_t&=\partial_x\hat f_t+(\partial_y\hat f_t)'\partial_x\hat\varphi_t \\
 B_t &=\partial_x\hat b_t \\
 C_t&=(\partial_y\hat b_t)' \\
 D_t(\cdot)&=\partial_x\hat\kappa_t(\cdot) \\
 E_t(\cdot)&=(\partial_y\hat\kappa_t(\cdot))' \\
 F&=\partial_x\hat g_T+E[\partial_y\hat g_T]\partial_x\hat\chi_T. 
\end{align*}
Being an equation of linear type we apply Corollary \ref{CorollaryGeneralmfBSDE} after verifying the conditions required. This can be easily done and we omit the details.
%For $(C1')$, we note that since $\chi$ is concave, it holds $E[\chi(X)]\leq\chi(E[X])<\infty$ for all $X\in L^1(\Omega,\F,P)$ by Jensen's inequality for concave functions. Moreover, by conditions $(O3)$ and $(O5)$ and since our solution to equation \eqref{mfSDE} lies in $\Ss^\FF_2$:
%\begin{align*}
 %\partial_x\hat g_T\in L^2(\Omega,\F,P),\,E[\partial_y\hat g_T]<\infty,\,\partial_x\chi_T\in L^2(\Omega,\F,P)\text{ and thus }F\in L^2(\Omega,\F,P).
%\end{align*}
%Condition $(C2')$ is clearly satisfied. Condition $(C3')$ we check exemplarily for $B_t$ and $E_t(\cdot)$:
%\begin{align*}
 %B_t&=\partial_x\hat b_t<\infty,\text{ by }(E2')\\
 %\int_{\R_0}|E_t(\xi)|^2(\lambda^H)'_t\nu(d\xi)&=\Big(\int_{\R_0}|\partial_y\hat\kappa_t(\xi)|^2\lambda^H_t\nu(d\xi)\Big)'<K,\text{ by assumption }(E2').
%\end{align*}
% The others can be checked the same way. Checking conditions $(C4')$ and $(C5')$ can be easily performed and is therefore omitted here.
\end{proof}

%%%%%%%%
%%%%%%%%
%%%%%%%%%%%
%%%%%%%%%%%

\subsection{A sufficient stochastic maximum principle}
Let us now define the Hamiltonian function
\begin{align}
 H(t,\lambda_t,x,y_1,y_2,u,p,q)&:=f(t,\lambda_t,x,y_1,u)+b(t,\lambda_t,x,y_2,u)\cdot p\nonumber\\
 &\quad+\kappa(t,0,\lambda_t,x,y_2,u)q(0)\lambda^B_t\\
 &\quad+\int\limits_{\R_0}\kappa(t,z,\lambda_t,x,y_2,u)q(z)\lambda^H_t\nu(dz).\nonumber
\end{align}
We introduce an $\FF$-Hamiltonian given by
\begin{align}
 H^\FF(t,\lambda_t,x,y_1,y_2,u,\hat p_{t-},\hat q_t)&:=E[H(t,\lambda_t,x,y_1,y_2,u,\hat p_{t-},\hat q_t)|\F_t]\\
 &=f(t,\lambda_t,x,y_1,u)+b(t,\lambda_t,x,y_2,u)E[\hat p_{t-}|\F_t]\nonumber\\
 &\quad+\kappa(t,0,\lambda_t,x,y_2,u)E[\hat q_t(0)|\F_t]\lambda^B_t\nonumber\\
 &\quad+\int\limits_{\R_0}\kappa(t,z,\lambda_t,x,y_2,u)E[\hat q_t(z)|\F_t]\lambda^H_t\nu(dz),\nonumber
\end{align}
where $(\hat p,\hat q)$ is the solution to the adjoint equation \eqref{adjointeq}. 
As anticipated earlier we deal with a form of partial information given by $\FF$ when compared with $\GG$. Note that $\GG$ includes the information of the whole evolution of the time change process $\lambda$, hence not feasible from a modelling perspective. For this we adopt techniques from \cite{DiS}.
Hereafter we formulate a sufficient maximum principle in the framework of Assumptions \ref{MainAssumptionsForwardDynamics} and \ref{MainAssumptions}. 

\begin{theorem}\label{SufficientMaxPrinciple}
 Let $\hat u\in\A$ and $(\hat p, \hat q)$ be the solution of the mean-field BSDE \eqref{adjointeq}. If the function
 \begin{align}\label{SufficientMaxPrincipleEq1}
  h_t(x,y_1,y_2)&:=\sup_{v\in U}H^\FF(t,\lambda_t,x,y_1,y_2,v,\hat p_{t-},\hat q_t)
 \end{align}
 exists for all $t\in[0,T]$, $P$-a.s., and is concave in $(x,y_1,y_2)$ and if furthermore
 \begin{align}\label{SufficientMaxPrincipleEq2}
  &H^\FF(t,\lambda_t,X^{\hat u}_{t-},E[\varphi(X^{\hat u}_{t})],E[X^{\hat u}_{t}],\hat u_t,\hat p_{t-},\hat q_t)=h_t(X^{\hat u}_{t-},E[\varphi(X^{\hat u}_{t})],E[X^{\hat u}_{t}]),
 \end{align}
 then $\hat u$ is an optimal control.
\end{theorem}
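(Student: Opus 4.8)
The plan is to follow the classical Bensoussan-type verification argument, adapted to the mean-field and time-change setting. Fix an arbitrary admissible control $u\in\A$ with associated state $X^u$, and write $\hat X=X^{\hat u}$. The goal is to show $J(\hat u)-J(u)\geq 0$. I would start by writing
\[
J(\hat u)-J(u)=E\Big[\int_0^T\big(\hat f_t-f_t\big)\,dt\Big]+E\big[\hat g_T-g_T\big],
\]
and handle the terminal term and the running term separately. For the terminal term, concavity of $g$ (from (O2)) together with the chain rule gives
\[
\hat g_T-g_T\geq \partial_x\hat g_T(\hat X_T-X^u_T)+\partial_y\hat g_T\big(E[\hat\chi_T]-E[\chi_T]\big),
\]
and then using concavity of $\chi$ and the sign condition in (O4) (or affineness of $\chi$), $E[\hat\chi_T]-E[\chi_T]\geq E[\partial_x\hat\chi_T(\hat X_T-X^u_T)]$ can be inserted whenever $\partial_y\hat g_T\geq 0$; combining, $E[\hat g_T-g_T]\geq E\big[\big(\partial_x\hat g_T+E[\partial_y\hat g_T]\partial_x\hat\chi_T\big)(\hat X_T-X^u_T)\big]=E[\hat p_T(\hat X_T-X^u_T)]$ by the terminal condition of \eqref{adjointeq}.

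Next I would apply the It\^o product rule to $\hat p_t(\hat X_t-X^u_t)$ on $[0,T]$. Since $\hat p$ solves the BSDE \eqref{adjointeq} with driver written in terms of the $\hat f,\hat b,\hat\kappa$ coefficients and martingale part $\int_\R\hat q_t(z)\mu(dt,dz)$, and $\hat X-X^u$ solves a forward SDE with drift $\hat b_t-b_t$ and jump coefficient $\hat\kappa_t(\cdot)-\kappa_t(\cdot)$ driven by $\mu$, the quadratic covariation term produces $\big(\partial_x\hat\kappa_t(0)(\hat\kappa_t(0)-\kappa_t(0))\lambda^B_t+\int_{\R_0}\partial_x\hat\kappa_t(z)(\hat\kappa_t(z)-\kappa_t(z))\lambda^H_t\nu(dz)\big)\,dt$ plus the $\partial_y$-counterparts carried by the mean-field terms. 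Taking expectations kills the martingale parts (here one uses $(\hat p,\hat q)\in S^\GG_2\times\Ii$ and the integrability of $\hat X-X^u$ in $S^\FF_2$, from Lemma \ref{ExistenceAndUniquenessSDEForOurOptimizationProblem}), and the key manipulation is that expectations of products like $E[\partial_y\hat b_t\cdot\hat p_t(\hat X_t-X^u_t)]$ can be re-paired using Fubini on $\Omega^2$ and the identities \eqref{correspondence} so that all mean-field terms reassemble into differences of the Hamiltonian $H$ evaluated at the two controls. This yields
\[
E[\hat p_T(\hat X_T-X^u_T)]
=E\Big[\int_0^T\!\!\Big(\!-\partial_x\hat f_t-E[\partial_y\hat f_t]\partial_x\hat\varphi_t\Big)(\hat X_t-X^u_t)\,dt\Big]
+E\Big[\int_0^T\!\!\big(\widehat{\mathbb H}_t-\mathbb H_t - \hat p_t(\hat b_t-b_t)-(\ldots)\big)dt\Big],
\]
where $\mathbb H_t:=H(t,\lambda_t,X^u_{t-},E[X^u_t],u_t,\hat p_{t-},\hat q_t)$ and $\widehat{\mathbb H}_t$ the same at $\hat u$; after collecting, the $\hat p,\hat q$-weighted terms match the definition of $H$ exactly.

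Assembling the pieces, $J(\hat u)-J(u)\geq E\big[\int_0^T\big(\widehat{\mathbb H}_t-\mathbb H_t-\partial_x\widehat{\mathbb H}_t(\hat X_t-X^u_t)-\partial_{y_1}\widehat{\mathbb H}_t(E[\hat\varphi_t]-E[\varphi_t])-\partial_{y_2}\widehat{\mathbb H}_t(E[\hat X_t]-E[X^u_t])\big)dt\big]$, where I use concavity of $\varphi$ and (O4) to pass from $E[\hat\varphi_t]-E[\varphi_t]$ to $E[\partial_x\hat\varphi_t(\hat X_t-X^u_t)]$ exactly as for $\chi$ above. Now I pass from $H$ to the partial-information Hamiltonian $H^\FF$ by conditioning on $\F_t$: since $X^u_{t-},u_t$ are $\F_t$-measurable (as $u\in\Hh^\FF$ and $X^u\in\Ss^\FF_2$), one has $E[\mathbb H_t\mid\F_t]=H^\FF(t,\lambda_t,X^u_{t-},E[\varphi_t],E[X^u_t],u_t,\hat p_{t-},\hat q_t)$ and similarly for $\hat u$, so the integrand's expectation is unchanged if $\mathbb H,\widehat{\mathbb H}$ are replaced by their $\F_t$-conditional versions. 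Finally, concavity of $h_t$ in $(x,y_1,y_2)$ together with the maximality condition \eqref{SufficientMaxPrincipleEq2} gives, via the standard argument that a concave function lies below its tangent and that $h_t=H^\FF(\cdot,\hat u_t,\cdot)$ at the optimal arguments with the gradient in $(x,y_1,y_2)$ agreeing with that of $H^\FF$ there (because $\hat u_t$ maximises $H^\FF$ in $v$), the inequality $H^\FF_t(\hat u)-H^\FF_t(u)-\langle\nabla_{(x,y_1,y_2)}H^\FF_t(\hat u),\,(\hat X_t-X^u_t,\ldots)\rangle\geq 0$, so the integrand is nonnegative and $J(\hat u)\geq J(u)$.

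The main obstacle I anticipate is the bookkeeping in the It\^o product-rule step: one must carefully justify that the local martingale parts are true martingales with zero expectation (invoking the $S_2$/$\Ii$ integrability and the Burkholder–Davis–Gundy control already used in Theorem \ref{ExistenceAndUniquenessSDE}), and, more delicately, that the mean-field cross-terms—those involving $E[\partial_y\hat b_t\cdot\hat p_t]$, $E[\partial_y\hat\kappa_t(z)\hat q_t(z)]$, and $E[\partial_y\hat f_t]\partial_x\hat\varphi_t$—recombine correctly. This is exactly where the copy-space formalism $(\Omega^2,\F^\otimes,P^\otimes)$, the operator $E'$, and the identities \eqref{correspondence} are needed: writing $E[\partial_y\hat b_t\cdot\hat p_t(\hat X_t-X^u_t)]=\mathbb E[(\partial_y\hat b_t)'\hat p_t'(\hat X_t'-X_t^{u\prime})]$ and reassigning primes lets the term be absorbed into $E[(\hat b_t-b_t)\,\hat p_t]$-type expressions only after the corresponding linearisation of $b$ in its mean-field argument; keeping track of which variable the expectation integrates out is the genuinely error-prone part, but it is routine once the pairing convention is fixed.
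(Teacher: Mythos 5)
Your proposal is correct and follows essentially the same route as the paper's proof: the decomposition of $J(\hat u)-J(u)$, the concavity estimates for $g$ and $\chi$ under (O2)/(O4), the It\^o product rule on $\hat p_t(\hat X_t-X^u_t)$, the passage to $H^\FF$ by conditioning on $\F_t$ via Fubini and the tower property, and the concavity-of-$h_t$ argument in which the supergradient is identified with $\nabla_{(x,y_1,y_2)}H^\FF$ at the optimal point. The only cosmetic difference is that the paper makes the gradient identification explicit through an auxiliary function $\rho$ attaining its maximum, which is the same first-order argument you invoke.
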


\begin{proof}
 Let $u\in\A$. Define $X_t:=X^u_t$, $\hat X_t:= X^{\hat u}_t$ and use the short-hand notation introduced in the previous section. 
 We shall prove that, for any $u\in \mathcal{A}$,  
 $$
 J(\hat u)-J(u)=E[\hat g_T- g_T]+E\Big[\int\limits_0^T\hat f_s- f_s ds\Big].
 $$ 
First of all we observe that
 \begin{align*}
  E[\hat g_T- g_T]
  &\geq E[\partial_x\hat g_T\cdot(\hat X_T-X_T)+\partial_y\hat g_T\cdot E[\partial_x\hat\chi_T\cdot(\hat X_T-X_T)]]\\
 & \geq E[\partial_x\hat g_T\cdot(\hat X_T-X_T)+E[\partial_y\hat g_T]\cdot \partial_x\hat\chi_T\cdot(\hat X_T-X_T)]\\
  &=E[\hat p_T\cdot(\hat X_T-X_T)-\hat p_0\cdot(\hat X_0-X_0)]
 \end{align*}
 where we have used the product rule, the form of the technical condition of  \eqref{adjointeq}, $\hat X_0=X_0=x$, and the observation that, for any two random variables $X$ and $Y$,
 \begin{align}\label{StandardTrickExpectations}
  E[E[X]\cdot Y]=E[X]\cdot E[Y]=E[X\cdot E[Y]].
 \end{align}
 Applying the It\^o's formula with the dynamics \eqref{ControlledDynamics} and \eqref{adjointeq} we have
 \begin{align*}
  E[\hat g_T- g_T]&\geq E[\hat p_T\cdot(\hat X_T-X_T)-\hat p_0\cdot(\hat X_0-X_0)]\\
  %&= E\Big[\int\limits_0^T\hat p_{s-}d(\hat X_s-X_s)+\int\limits_0^T(\hat X_{s-}-X_{s-})d\hat p_s+[\hat p, \hat X-X]_T\Big]\\
  &=E\Big[\int\limits_0^T\hat p_{s-}(\hat b_s-b_s)ds+\int\limits_0^T\int\limits_\R\hat p_{s-}(\hat\kappa_s(z)-\kappa_s(z))\mu(ds,dz)\\
  &\quad+\int\limits_0^T-(\hat X_{s-}-X_{s-})\Big\{\partial_x\hat f_s+\partial_x\hat b_s\cdot\hat p_s+\partial_x\hat\kappa_s(0)\hat q_s(0)\lambda^B_s\\
  &\quad+\int\limits_{\R_0}\partial_x\hat\kappa_s(z)\hat q_s(z)\lambda^H_s\nu(dz)+E[\partial_y\hat f_s]\partial_x\hat\varphi_s+E[\partial_y\hat b_s\cdot\hat p_s]\\
  &\quad+E\Big[\partial_y\hat\kappa_s(0)\hat q_s(0)\lambda^B_s+\int\limits_{\R_0}\partial_y\hat\kappa_s(z)\hat q_s(z)\lambda^H_s\nu(dz)\Big]\Big\}ds\\
  &\quad+\int\limits_0^T\int\limits_\R(\hat X_{s-}-X_{s-})\hat q_s(z)\mu(ds,dz)\\
  &\quad+\int\limits_0^T(\hat\kappa_s(0)-\kappa_s(0))\hat q_s(0)\lambda^B_s+\int\limits_\R(\hat\kappa_s(z)-\kappa_s(z))\hat q_s(z)\lambda^H_s\nu(dz)\ds\Big]
 \end{align*}
Recall that $\mu$ is a martingale random and It\^o calculus rules apply. Then using \eqref{StandardTrickExpectations} we obtain
 \begin{align*}
  &E[\hat g_T- g_T]\\
  &\geq E\Big[\int\limits_0^T\hat p_{s-}(\hat b_s-b_s)ds+\int\limits_0^T(\hat\kappa_s(0)-\kappa_s(0))\hat q_s(0)\lambda^B_s+\int\limits_\R(\hat\kappa_s(z)-\kappa_s(z))\hat q_s(z)\lambda^H_s\nu(dz)\ds\\
  &\quad+\int\limits_0^T-(\hat X_{s-}-X_{s-})\Big\{\partial_x\hat f_s+\partial_x\hat b_s\cdot\hat p_{s-}+\partial_x\hat\kappa_s(0)\hat q_s(0)\lambda^B_s+\int\limits_{\R_0}\partial_x\hat\kappa_s(z)\hat q_s(z)\lambda^H_s\nu(dz)\Big\}ds\\
  &\quad+\int\limits_0^T-\Big\{\partial_y\hat f_s\cdot E[\partial_x\hat\varphi_s\cdot(\hat X_s-X_s)]+\partial_y\hat b_s\cdot\hat p_{s-}\cdot E[\hat X_s-X_s]\\
  &\quad+\Big(\partial_y\hat\kappa_s(0)\hat q_s(0)\lambda^B_s+\int\limits_{\R_0}\partial_y\hat\kappa_s(z)\hat q_s(z)\lambda^H_s\nu(dz)\Big)\cdot E[\hat X_s-X_s]\Big\}ds\Big].
 \end{align*}
 Since $u,\hat u\in\A$, the terms that are $\GG$-adapted, but not necessarily $\FF$-adapted, are $\hat p$ and $\hat q$. So Fubini's theorem and the tower property yield
 \begin{align}
  &E[\hat g_T- g_T] \notag\\
  &\geq E\Big[\int\limits_0^T\Big\{E[\hat p_{s-}|\F_s](\hat b_s-b_s)+(\hat\kappa_s(0)-\kappa_s(0))E[\hat q_s(0)|\F_s]\lambda^B_s+\int\limits_\R(\hat\kappa_s(z)-\kappa_s(z))E[\hat q_s(z)|\F_s]\lambda^H_s\nu(dz)\Big\} \notag\\
  &\quad-(\hat X_{s-}-X_{s-})\Big\{\partial_x\hat f_s+\partial_x\hat b_s\cdot E[\hat p_{s-}|\F_s]+\partial_x\hat\kappa_s(0)E[\hat q_s(0)|\F_s]\lambda^B_s+\int\limits_{\R_0}\partial_x\hat\kappa_s(z)E[\hat q_s(z)|\F_s]\lambda^H_s\nu(dz)\Big\} \notag\\
  &\quad-\Big\{\partial_y\hat f_s\cdot E[\partial_x\hat\varphi_s\cdot(\hat X_s-X_s)]+\partial_y\hat b_s\cdot E[\hat p_{s-}|\F_s]\cdot E[\hat X_s-X_s] \notag\\
  &\quad+\Big(\partial_y\hat\kappa_s(0)E[\hat q_s(0)|\F_s]\lambda^B_s+\int\limits_{\R_0}\partial_y\hat\kappa_s(z)E[\hat q_s(z)|\F_s]\lambda^H_s\nu(dz)\Big)\cdot E[\hat X_s-X_s]\Big\}ds\Big].\label{Subresult1}
 \end{align}
Observe that
 \begin{align*}
  &E[\hat p_{s-}|\F_s](\hat b_s-b_s)+(\hat\kappa_s(0)-\kappa_s(0))E[\hat q_s(0)|\F_s]\lambda^B_s+\int\limits_\R(\hat\kappa_s(z)-\kappa_s(z))E[\hat q_s(z)|\F_s]\lambda^H_s\nu(dz)\\
  &=\Big(H^\FF(s,\lambda_s,\hat X_{s-},E[\varphi(\hat X_{s})],E[\hat X_{s}],\hat u_s,\hat p_{s-},\hat q_s)-H^\FF(s,\lambda_s,X_{s-},E[\varphi(X_{s})],E[X_{s}],u_s,\hat p_{s-},\hat q_s)\Big)\\
  &\quad-\Big(\hat f_s-f_s\Big).
 \end{align*}
 By \eqref{SufficientMaxPrincipleEq1} and \eqref{SufficientMaxPrincipleEq2} we have that, for all $(x,y_1,y_2)\in\R^3$,
 \begin{align}\label{Hilfsgl1}
 \begin{split}
  &H^\FF(s,\lambda_s,\hat X_{s-},E[\hat\varphi_s],E[\hat X_s],\hat u_s,\hat p_{s-},\hat q_s)-H^\FF(s,\lambda_s,x,y_1,y_2,u_s,\hat p_{s-},\hat q_s)\\
  &\geq h^\FF_s(\hat X_{s-},E[\varphi(\hat X_s)],E[\hat X_s])-h^\FF_s(x,y_1,y_2),
 \end{split}
 \end{align}
 and thus the two relationships above give
 \begin{align}\label{Subresult2}
  &E[\hat p_{s-}|\F_s](\hat b_s-b_s)+(\hat\kappa_s(0)-\kappa_s(0))E[\hat q_s(0)|\F_s]\lambda^B_s+\int\limits_\R(\hat\kappa_s(z)-\kappa_s(z))E[\hat q_s(z)|\F_s]\lambda^H_s\nu(dz) \notag\\
  &\geq\Big(h^\FF_s(\hat X_{s-},E[\varphi(\hat X_s)],E[\hat X_s])-h^\FF_s(X_{s-},E[\varphi(X_s)],E[X_s])\Big)-\Big(\hat f_s-f_s\Big).
 \end{align}
 Now, by the concavity of $h^\FF_s$ and a separating hyperplane argument, there exists a vector $a\in\R^3$ such that for all $(x,y_1,y_2)\in\R^3$:
 \begin{align}\label{Hilfsgl2}
  h^\FF_s(\hat X_{s-},E[\varphi(\hat X_s)],E[\hat X_s])-h^\FF_s(x,y_1,y_2)-\Big\langle a,\begin{pmatrix}
               \hat X_{s-}-x\\
               E[\varphi(\hat X_s)]-y_1\\
               E[\hat X_s]-y_2
              \end{pmatrix}\Big\rangle\geq0
 \end{align}
 Define
 \begin{align*}
  \rho(x,y_1,y_2)&:=H^\FF(s,\lambda_s,\hat X_{s-},E[\varphi(\hat X_s)],E[\hat X_s],\hat u_s,\hat p_{s-},\hat q_s)\\
  &\quad-H^\FF(s,\lambda_s,x,y_1,y_2,\hat u_s,\hat p_{s-},\hat q_s)-\Big\langle a,\begin{pmatrix}
               \hat X_{s-}-x\\
               E[\varphi(\hat X_s)]-y_1\\
               E[\hat X_s]-y_2
              \end{pmatrix}\Big\rangle
 \end{align*}
 By \eqref{Hilfsgl1}, it holds $\rho(x,y_1,y_2)\geq0$. On the other hand, we have
 \begin{align*}
  \rho(\hat X_{s-},E[\varphi(\hat X_s)],E[\hat X_s])=0,
 \end{align*}
 i.e. $\rho$ obtains a maximum in $(\hat X_{s-},E[\varphi(\hat X_s)],E[\hat X_s])$. 
 Since $\rho$ is $C^1$ we have
 \begin{align*}
  0&=\nabla\rho(\hat X_{s-},E[\varphi(\hat X_s)],E[\hat X_s])\\
  &=\nabla H^\FF(s,\lambda_s,x,y_1,y_2,u_s,\hat p_{s-},\hat q_s)\Big|_{\substack{x=\hat X_{s-},\,y_1=E[\varphi(\hat X_s)],\\ y_2=E[\hat X_s]}}-a,
 \end{align*}
 where $\nabla$ denotes the gradient w.r.t. $(x,y_1,y_2)$. Plugging this into \eqref{Hilfsgl2} finally we obtain
 \begin{align}\label{Subresult3}
  \begin{split}
   0\leq&h^\FF_s(\hat X_{s-},E[\varphi(\hat X_s)],E[\hat X_s])-h^\FF_s(X_{s-},E[\varphi(X_s)],E[X_s])\\
   &\quad-\Big\langle \nabla H^\FF(s,\lambda_s,\hat X_{s-},E[\hat\varphi_s],E[\hat X_s],u_s,\hat p_{s-},\hat q_s),\begin{pmatrix}
               \hat X_{s-}-X_{s-}\\
               E[\varphi(\hat X_s)-\varphi(X_s)]\\
               E[\hat X_s-X_s]
              \end{pmatrix}\Big\rangle.
  \end{split}
 \end{align}
Since $\varphi$ is concave and either $\varphi$ is affine or $\partial_y\hat f_s\geq0$, it holds
 \begin{align*}
  &\Big\{\partial_x\hat f_s+\partial_x\hat b_s\cdot E[\hat p_{s-}|\F_s]+\partial_x\hat\kappa_s(0)E[\hat q_s(0)|\F_s]\lambda^B_s+\int\limits_{\R_0}\partial_x\hat\kappa_s(z)E[\hat q_s(z)|\F_s]\lambda^H_s\nu(dz)\Big\}(\hat X_{s-}-X_{s-})\\
  &\quad+\Big\{\partial_y\hat f_s\cdot E[\partial_x\hat\varphi_s\cdot(\hat X_s-X_s)]+\partial_y\hat b_s\cdot E[\hat p_{s-}|\F_s]\cdot E[\hat X_s-X_s]\\
  &\quad+\Big(\partial_y\hat\kappa_s(0)E[\hat q_s(0)|\F_s]\lambda^B_s+\int\limits_{\R_0}\partial_y\hat\kappa_s(z)E[\hat q_s(z)|\F_s]\lambda^H_s\nu(dz)\Big)\cdot E[\hat X_s-X_s]\Big\}\\  
  &\leq\Big\{\partial_x\hat f_s+\partial_x\hat b_s\cdot E[\hat p_{s-}|\F_s]+\partial_x\hat\kappa_s(0)E[\hat q_s(0)|\F_s]\lambda^B_s+\int\limits_{\R_0}\partial_x\hat\kappa_s(z)E[\hat q_s(z)|\F_s]\lambda^H_s\nu(dz)\Big\}(\hat X_{s-}-X_{s-})\\
  &\quad+\Big\{\partial_y\hat f_s\cdot E[\varphi_s(\hat X_s)-\varphi_s(X_s)]+\partial_y\hat b_s\cdot E[\hat p_{s-}|\F_s]\cdot E[\hat X_s-X_s]\\
  &\quad+\Big(\partial_y\hat\kappa_s(0)E[\hat q_s(0)|\F_s]\lambda^B_s+\int\limits_{\R_0}\partial_y\hat\kappa_s(z)E[\hat q_s(z)|\F_s]\lambda^H_s\nu(dz)\Big)\cdot E[\hat X_s-X_s]\Big\}\\
  &=\Big\langle \nabla H^\FF(s,\lambda_s,\hat X_{s-},E[\hat\varphi_s],E[\hat X_s],u_s,\hat p_{s-},\hat q_s),\begin{pmatrix}
               \hat X_{s-}-X_{s-}\\
               E[\varphi(\hat X_s)-\varphi(X_s)]\\
               E[\hat X_s-X_s]
              \end{pmatrix}\Big\rangle
 \end{align*}
 Plugging this last result, together with \eqref{Subresult2} and \eqref{Subresult3}, into \eqref{Subresult1}, we finally obtain
$$
  E[\hat g_T-g_T]\geq E\Big[-\int\limits_0^T\hat f_s-f_sds\Big],
$$
thus $J(\hat u)-J(u)\geq0$.
\end{proof}

%%%%%%%%%%%%%%%%%%%%%%%%%%%%%%%%%%%%%%%%%%
\subsection{A necessary stochastic maximum principle}
For a necessary maximum principle we introduce additional specifications to the objective functional \eqref{Objective}.
\begin{assumptions}\label{AdditionalAssumptionsNecessMaxPrin}\hspace{10cm}
 \begin{itemize}
  \item[(O6)] The functions $f$ and $g$ have quadratic increments in $(x,y,u)$ uniformly in $(s,\lambda)$, i.e.
  \begin{align*}
   &|f(s,\lambda,x_1,y_1,u_1)-f(s,\lambda,x_2,y_2,u_2)|+|g(x_1,y_1)-g(x_2,y_2)|\\
   &\leq M(1+|x_1|+|x_2|+|y_1|+|y_2|+|u_1|+|u_2|)(|x_1-x_2|+|y_1-y_2|+|u_1-u_2|)
  \end{align*}
  \item[(O7)] Each of the functions
  \begin{align*}
   (x,y,u)&\longmapsto\partial_i f(s,\lambda,x,y,u),\,i=x,y,u\\
   (x,y)&\longmapsto\partial_i g(x,y),\,i=x,y
  \end{align*}
  is either Lipschitz (uniformly in $(s,\lambda)$ in the case of $f$) \textbf{or} independent of $s$ and $\lambda$ (automatically fulfilled by $g$) and bounded.
 \end{itemize}
\end{assumptions}
\begin{remark}\label{AlternativeA9}
 Note that, by differentiability of $f$ and $g$, assumption $(A2)$ is equivalent to
 \begin{align*}
  |\partial_i f(s,\lambda,x,y,u)|&\leq M(1+2|x|+2|y|+2|u|),\,i=x,y,u\\
  |\partial_i g(x,y)|&\leq M(1+2|x|+2|y|),\,i=x,y.
 \end{align*}
 This can be easily checked by the fundamental theorem of calculus.
\end{remark}

First we present some preparatory lammata. The arguments of the proofs are rather classical and the structure is similar to the one in \cite{AD}. Some of our conditions differ to fit the framework and the theory we presented in the earlier sections.
\begin{lemma}\label{DerivativeProcesswrtuPsiPhiAffine}
 Let $\hat u\in\A$ and $v\in\Hh^{\FF}$ such that for $\theta$ small enough, $\hat u+\theta v\in\A$.
 Then, the following mean-field SDE has a unique solution in $S^\FF_2$:
 \begin{align}
  dZ_t&=\{\partial_x\hat b_t\cdot Z_{t-}+\partial_y\hat b_tE[Z_t]+\partial_u\hat b_t\cdot v_t\}dt \notag\\
  &\quad+\int\limits_\R\{\partial_x\hat\kappa_t(z)\cdot Z_{t-}+\partial_y\hat\kappa_t(z)E[Z_t]+\partial_u\hat\kappa_t(z)\cdot v_t\}\mu(dt,dz)\notag\\
  Z_0&=0.\label{ZSDEaffine}
 \end{align}
 Moreover,
 \begin{align}\label{convergence}
  E\Big[\sup_{t\in[0,T]}\Big|\frac{X^{\hat u+\theta v}_t-X^{\hat u}_t}{\theta}-Z_t\Big|^2\Big]\rightarrow0,\text{ as }\theta\rightarrow0.
 \end{align}
\end{lemma}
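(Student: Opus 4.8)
The plan is to treat \eqref{ZSDEaffine} as an instance of the controlled mean-field SDE \eqref{ControlledDynamics} with linear coefficients, so that existence and uniqueness in $S^\FF_2$ follow directly from Lemma \ref{ExistenceAndUniquenessSDEForOurOptimizationProblem} (equivalently Theorem \ref{ExistenceAndUniquenessmfSDE}) once the structural assumptions $(E1')$--$(E3')$ are verified. Here the coefficients of the $Z$-equation are $\tilde b(t,x,y) := \partial_x\hat b_t\cdot x + \partial_y\hat b_t\cdot y + \partial_u\hat b_t\cdot v_t$ and $\tilde\kappa(t,z,x,y) := \partial_x\hat\kappa_t(z)\cdot x + \partial_y\hat\kappa_t(z)\cdot y + \partial_u\hat\kappa_t(z)\cdot v_t$, so the decomposition required in $(E1')$ is immediate with the ``$b_1,\kappa_1$'' parts being the linear maps in $(x,y)$ and the ``$b_0,b_2,\kappa_0,\kappa_2$'' parts coming from the $\FF$-predictable processes $\partial_i\hat b,\partial_i\hat\kappa$ (predictability of these holds because $\hat X\in S^\FF_2$, $\hat u\in\A$, and $\partial_i b,\partial_i\kappa$ are continuous; $v\in\Hh^\FF$). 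The Lipschitz/boundedness requirement $(E2')$ for the $Z$-equation is nothing but the boundedness \eqref{boundedness} of $\partial_i\hat b,\partial_i\hat\kappa$ from $(E2')$ (the linear coefficients are globally Lipschitz in $(x,y)$ with the bounded slopes $\partial_i\hat b,\partial_i\hat\kappa$, and trivially Lipschitz — indeed constant — in the dependence we keep fixed). Finally $(E3')$ for the $Z$-equation reduces to $E[\int_0^T |\partial_u\hat b_t|^2|v_t|^2 + \|\partial_u\hat\kappa_t(\cdot)\|_{\lambda_t}^2|v_t|^2\,dt]<\infty$, which follows from the boundedness \eqref{boundedness} together with $v\in\Hh^\FF$. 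This establishes the first assertion.

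For \eqref{convergence} I would set $R^\theta_t := \theta^{-1}(X^{\hat u+\theta v}_t - X^{\hat u}_t) - Z_t$ and write an SDE for $R^\theta$ by subtracting the dynamics \eqref{ControlledDynamics} for $X^{\hat u+\theta v}$ and $X^{\hat u}$ and the equation \eqref{ZSDEaffine} for $Z$. Using the fundamental theorem of calculus, the increment $b(s,\lambda_s,X^{\hat u+\theta v}_{s-},E[X^{\hat u+\theta v}_s],\hat u_s+\theta v_s) - b(s,\lambda_s,\hat X_{s-},E[\hat X_s],\hat u_s)$ equals an integral over $[0,1]$ of $\langle\nabla b(\cdot\text{ at interpolated point}),(X^{\hat u+\theta v}_{s-}-\hat X_{s-},\,E[X^{\hat u+\theta v}_s-\hat X_s],\,\theta v_s)\rangle$, and similarly for $\kappa$; dividing by $\theta$ and subtracting the $Z$-equation, the drift of $R^\theta$ splits into a ``linear'' part $\partial_x\hat b_s R^\theta_{s-}+\partial_y\hat b_s E[R^\theta_s]$ (plus the analogous $\kappa$ term inside the stochastic integral) and a ``remainder'' part measuring the difference between $\nabla b$ evaluated at the interpolated point and $\nabla b$ evaluated at $\hat X$. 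Applying the $S_2$-norm estimate exactly as in Step 1 of the proof of Theorem \ref{ExistenceAndUniquenessSDE} — Burkholder--Davis--Gundy for the martingale part, the boundedness \eqref{boundedness} for the linear terms, and Gr\"onwall — one obtains
\begin{align*}
 E\Big[\sup_{t\le T}|R^\theta_t|^2\Big] \le K\, e^{KT}\, \rho(\theta),
\end{align*}
where $\rho(\theta)$ collects the remainder terms. It therefore suffices to show $\rho(\theta)\to0$ as $\theta\to0$.

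The main obstacle is precisely the control of $\rho(\theta)$. The key inputs are: first, the a priori bound $E[\sup_{t\le T}\theta^{-2}|X^{\hat u+\theta v}_t-\hat X_t|^2]\le C$ uniformly for small $\theta$ (obtained by the same Gr\"onwall argument applied to the difference SDE for $X^{\hat u+\theta v}-\hat X$, using only the Lipschitz property $(E2')$ and $v\in\Hh^\FF$), which in particular gives $X^{\hat u+\theta v}\to\hat X$ in $S^\FF_2$ and hence the interpolation points converge to $\hat X$; second, the Lipschitz continuity of $\partial_i b,\partial_i\kappa$ in $(x,y,u)$ from $(E2')$, which turns the difference of gradients into something bounded by $L(|X^{\hat u+\theta v}_{s-}-\hat X_{s-}| + |E[X^{\hat u+\theta v}_s-\hat X_s]| + \theta|v_s|)$ times the (bounded, by the a priori estimate) factor $\theta^{-1}(X^{\hat u+\theta v}_{s-}-\hat X_{s-},\dots,\theta v_s)$; applying Cauchy--Schwarz and the a priori bound then yields $\rho(\theta)=O(\theta^2) + o(1)\to0$ by dominated convergence. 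Concavity/affineness plays no role here — the ``PsiPhiAffine'' in the lemma name refers only to which subsequent results this version feeds into — so the argument is purely one of stability of linear mean-field SDEs under perturbation, entirely parallel to \cite[Proposition 3.1 or the corresponding step]{AD}.
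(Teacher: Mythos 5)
Your overall strategy coincides with the paper's: linearise via the fundamental theorem of calculus, establish the a priori bound $E[\sup_t|X^{\hat u+\theta v}_t-X^{\hat u}_t|^2]=O(\theta^2)$ by Gr\"onwall, and then run a second Gr\"onwall argument on the remainder process. There are, however, two concrete problems with the way you close the argument. The minor one: the $Z$-equation does \emph{not} satisfy $(E1')$, because its drift $\partial_x\hat b_t\,x+\partial_y\hat b_t\,y+\partial_u\hat b_t\,v_t$ has two \emph{different} random coefficients multiplying $x$ and $y$, so it cannot be written in the product form $b_0(\omega,s,\lambda)\cdot b_1(s,\lambda,x,y,u)+b_2$ with a deterministic $b_1$. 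One must instead verify the general conditions $(E1)$--$(E3)$ and invoke Theorem \ref{ExistenceAndUniquenessmfSDE} directly, which is what the paper does (predictability of $\partial_i\hat b,\partial_i\hat\kappa$ still comes from $(E1')$ applied to the original $b,\kappa$; the Lipschitz constant is the bound $K$ from \eqref{boundedness}; $(E3)$ is $2K^2\lVert v\rVert^2_{\Hh^\FF}<\infty$).

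The more serious issue is your control of $\rho(\theta)$. You pair the gradient difference $\nabla b^{l\theta}-\nabla\hat b$ (of size $L\,l\theta\cdot(\theta^{-1}|\Delta X|+\cdots)$ by Lipschitzness) with the factor $\theta^{-1}(\Delta X_{s-},E[\Delta X_s],\theta v_s)$, which is only bounded in the $\Ss^\FF_2$-norm, not pointwise. The resulting remainder integrand is of order $\theta\,(\theta^{-1}|\Delta X_s|+\cdots)^2$, so $\rho(\theta)=E[\int_0^T|\text{rem}_s|^2ds]$ involves \emph{fourth} moments of $\theta^{-1}\Delta X^\theta$, which neither Cauchy--Schwarz nor the $L^2$ a priori bound delivers; and dominated convergence has no $\theta$-independent dominating function here since the second factor itself varies with $\theta$. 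The paper circumvents this by choosing the other grouping: writing $\theta^{-1}\Delta X=Y^\theta+Z$, the terms $\int_0^1\partial_i b^{l\theta}dl$ acting on $(Y^\theta_{s-},E[Y^\theta_s])$ are absorbed into the Gr\"onwall term using only the uniform bound $K$ from \eqref{boundedness}, while the genuine remainder is $\int_0^1(\nabla b^{l\theta}-\nabla\hat b)dl$ paired with the \emph{fixed} ($\theta$-independent) processes $(Z,E[Z],v)$. Since the gradient difference is bounded by $2K$ and tends to $0$ in probability, and $\sup_t|Z_t|^2\in L^1$, Vitali's theorem (equivalently dominated convergence with dominating function $4K^2\sup_t|Z_t|^2$) gives $L^1$-convergence of the product to zero. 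You should adopt this regrouping; without it the Cauchy--Schwarz step does not go through at the stated level of integrability.
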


\begin{proof}
Notice that \eqref{ZSDEaffine} we can be rewriten as 
\begin{align*}
  Z_t&=\int\limits_0^t\tilde b(s,Z_{s-},\Ll_{Z_s})ds+\int\limits_0^t\int\limits_\R\tilde\kappa(s,z,Z_{s-},\Ll_{Z_s})\mu(ds,dz),
 \end{align*}
 where
 \vspace{-2mm}
 \begin{align*}
  \tilde b(s,x,\Yy)&=\partial_x\hat b_s\cdot x+\partial_y\hat b_s\langle\id,\Yy\rangle+\partial_u\hat b_s\cdot v_s \\
  \tilde\kappa(s,z,x,\Yy)&=\partial_x\hat\kappa_s(z)\cdot x+\partial_y\hat\kappa_s(z)\langle\id,\Yy\rangle+\partial_u\hat\kappa_s(z)\cdot v_s
 \end{align*}
 To show existence of the solution we apply Theorem \ref{ExistenceAndUniquenessmfSDE} after having checked that $\tilde b$ and $\tilde\kappa$ satisfy conditions $(E1)-(E3)$.
 
 \vspace{2mm}
 Concerning $(E1)$, we make use of the special structure of $\tilde b$ and $\tilde\kappa$.
 In fact
  \begin{align*}
  \partial_i b(s,\lambda,x,y,u,\omega)&=b_0(s,\lambda,\omega)\partial_i b_1(s,\lambda,x,y,u) \\
  \partial_i\kappa(s,\lambda,x,y,u,\omega)&=\kappa_0(s,z,\lambda,\omega)\partial_i\kappa_1(s,z,\lambda,x,y,u) \quad\text{ for }i=x,y,u,
 \end{align*}
 where we recall condition $(E1')$ and we note that
 \begin{align*}
  (s,z,\omega)\longmapsto\begin{pmatrix}
                      s\\
                      \lambda_s(\omega)\\
                      X^{\hat u}_{s-}(\omega)\\
                      \langle\id,\Ll_{X^{\hat u}_s}\rangle\\
                      \hat u_s(\omega)
                     \end{pmatrix}
 \end{align*}
 is $\PF$-measurable, as $\lambda$ and $\hat u$ are $\FF$-predictable, $(X_{t-})_{t\in[0,T]}$ is $\FF$-adapted and c\`agl\`ad and $t\longmapsto\langle\id,\Ll_{X^{\hat u}_{t}}\rangle$ is deterministic. Then the processes
 \begin{align*}
  \partial_i b_1(s,\lambda_t(\omega),X^{\hat u}_{s-},\langle\id,\Ll_{X^{\hat u}_s}\rangle,u_s)\text{ and }\partial_i\kappa_1(s,z,\lambda_s(\omega),X^{\hat u}_{s-},\langle\id,\Ll_{X^{\hat u}_s}\rangle,u_s)
 \end{align*}
 are $\FF$-predictable. 
 Again by $(E1')$ the processes $b_0$ and $\kappa_0$ are $\PF$-measurable.
 Since $v\in\Hh^{\FF}$ is predictable, this implies that $\partial_x\hat b_s,\,\partial_y\hat b_s,\,\partial_u\hat b_s\cdot v_s,\,\partial_x\hat\kappa_s(\cdot),\,\partial_y\hat\kappa_s(\cdot),\,\partial_u\hat\kappa_s(\cdot)\cdot v_s$ are all predictable processes,then also the processes
 $ 
\tilde b(s,x,\Yy,\omega)$ and 
$\tilde\kappa(s,z,x,\Yy,\omega)$ are $\PF$-measurable and therefore $(E1)$ is fulfilled.
 
 \vspace{2mm}
 Observe that, by $(E2')$,
 \begin{align*}
   &|\tilde b(s,x_1,\Yy_1,\omega)-\tilde b(s,x_2,\Yy_2,\omega)|+\lVert\tilde\kappa(s,\cdot,x_1,\Yy_1,\omega)-\tilde\kappa(s,\cdot,x_2,\Yy_2,\omega)\rVert_{\lambda_s}\\
   &=|\partial_x\hat b_s(\omega)\cdot (x_1-x_2)+\partial_y\hat b_s(\omega)\langle\id,\Yy_1-\Yy_2\rangle|\\
   &\quad+\lVert\partial_x\hat\kappa_s(\cdot,\omega)\cdot (x_1-x_2)+\partial_y\hat\kappa_s(\cdot,\omega)\langle\id,\Yy_1-\Yy_2\rangle\rVert_{\lambda_s}\\
   &\leq4K(|x_1-x_2|+|\langle\id,\Yy_1-\Yy_2\rangle|)\leq C(|x_1-x_2|+d_{\R}(\Yy_1,\Yy_2)).
 \end{align*}
 Hence $(E2)$ holds. Finally consider 
$v\in\Hh^{\FF}$, then
 \begin{align*}
  E\Big[\int_0^T|\tilde b(s,0,\delta_0)|^2+\lVert\tilde\kappa(s,\cdot,0,\delta_0)\rVert_{\lambda_s}^2ds\Big]&=E\Big[\int_0^T|\partial_u\hat b_s\cdot v_s|^2+\lVert\partial_u\hat\kappa_s(\cdot)\cdot v_s\rVert_{\lambda_s}^2ds\Big]\\
  &2K^2E\Big[\int_0^T|v_s|^2ds\Big]<\infty,
 \end{align*}
which give (E3).

\vspace{2mm}
We now prove \eqref{convergence}. 
Fix now $\theta>0$ so that $\hat u+\theta v\in\A$. Since the mean-field SDE for $X^{\hat u+\theta v}$ has a unique solution in $\Ss^\FF_2$, then also the mean-field SDE for $Y^\theta_t:=\frac{X^{\hat u+\theta v}_t-X^{\hat u}_t}{\theta}-Z_t$ has a unique solution. 
Observe that $X^{\hat u+\theta v}_t=X^{\hat u}_t+\theta(Y^\theta_t+Z_t)$. Now define
 \begin{align*}
  b^{a}_t&:=b(t,\lambda_t,X^{\hat u}_{t-}+a(Y^\theta_{t-}+Z_{t-}),E[X^{\hat u}_t+a(Y^\theta_t+Z_t)],\hat u_t+a v_t),\\
  \kappa^{a}_t(z)&:=\kappa(t,z,\lambda_t,X^{\hat u}_{t-}+a(Y^\theta_{t-}+Z_{t-}),E[X^{\hat u}_t+a(Y^\theta_t+Z_t)],\hat u_t+a v_t).
 \end{align*}
 It holds $\hat b_t=b^{0}_t$ and
 \begin{align*}
  b^{\theta}_t-\hat b_t&=\int\limits_0^1\frac{d}{dl}b^{l\cdot\theta}_tdl=\int\limits_0^1\Big\langle\nabla b^{l\cdot\theta}_t,\begin{pmatrix}
 \theta(Y^\theta_{t-}+Z_{t-})\\
 \theta E[Y^\theta_t+Z_t]\\
\theta v_t
\end{pmatrix}\Big\rangle dl\\
  &=\theta\int\limits_0^1\partial_xb^{l\cdot\theta}_t\cdot(Y^\theta_{t-}+Z_{t-})+\partial_yb^{l\cdot\theta}_tE[Y^\theta_t+Z_t]+\partial_ub^{l\cdot\theta}_t\cdot v_tdl.
 \end{align*}
 We have an analogous equation for $\kappa^{\theta}_t(z)-\hat\kappa_t(z)$. 
By the definition of $Y^\theta$,
 \begin{align}
  dY^\theta_t&=\frac{1}{\theta}(dX^{\hat u+\theta v}_t-dX^{\hat u}_t)-dZ_t\nonumber\\
  &=\frac{1}{\theta}\Big((b^{\theta}_t-\hat b_t)dt+\int\limits_\R\kappa^{\theta}_t(z)-\hat\kappa_t(z)\mu(dt,dz)\Big)-dZ_t\nonumber\\
  &=\Big\{\int\limits_0^1\partial_xb^{l\cdot\theta}_t\cdot Y^\theta_{t-}dl+\int\limits_0^1(\partial_xb^{l\cdot\theta}_t-\partial_x\hat b_t) Z_{t-}dl+\int\limits_0^1\partial_yb^{l\cdot\theta}_tE[Y^\theta_t]dl\nonumber\\
  &\quad+\int\limits_0^1(\partial_yb^{l\cdot\theta}_t-\partial_y\hat b_t)E[Z_t]dl+\int\limits_0^1(\partial_ub^{l\cdot\theta}_t-\partial_u\hat b_t)\cdot v_tdl\Big\}dt\label{DynamicsYtheta}\\
  &\quad+\int\limits_\R\Big\{\int\limits_0^1\partial_x\kappa^{l\cdot\theta}_t(z)\cdot Y^\theta_{t-}dl+\int\limits_0^1(\partial_x\kappa^{l\cdot\theta}_t(z)-\partial_x\hat\kappa_t(z)) Z_{t-}dl+\int\limits_0^1\partial_y\kappa^{l\cdot\theta}_t(z)E[Y^\theta_t]dl\nonumber\\
  &\quad+\int\limits_0^1(\partial_y\kappa^{l\cdot\theta}_t(z)-\partial_y\hat \kappa_t(z))E[Z_t]dl+\int\limits_0^1(\partial_u\kappa^{l\cdot\theta}_t(z)-\partial_u\hat\kappa_t(z))\cdot v_tdl\Big\}\mu(dt,dz).\nonumber
 \end{align}
Hereafter we study the convergence to $0$ in $L^2$ of all the terms in the dynamics of $Y^\theta$ that contain a difference. 
We take the term $\int_0^t\int_0^1(\partial_xb^{l\cdot\theta}_t-\partial_x\hat b_t) Z_{t-}dlds$ as an example, the other ones work the same way. First note that, by the definition of $Y^\theta$, the Lipschitzianity of $b$ and $\kappa$ which follows from $(E2')$,
 \begin{align*}
  &E[\sup_{t\in[0,T]}|\theta(Y^\theta_{t}+Z_{t})|^2]=E[\sup_{t\in[0,T]}|X^\theta_{t}-X^{\hat u}_t|^2]\\
  &=E\Big[\sup_{t\in[0,T]}\Big|\int_0^tb^\theta_s-\hat b_sds+\int_0^t\int_\R\kappa^\theta_s(z)-\hat\kappa_s(z)\mu(ds,dz)\Big|^2\Big]\\
  &\leq2(T\vee\ C_1)E\Big[\int_0^T|b^\theta_s-\hat b_s|^2+\lVert\kappa^\theta_s(z)-\hat\kappa_s(z)\rVert_{\lambda_s}^2ds\Big]\\
  &\leq2(T\vee\ C_1)E\Big[\int_0^T6K^2|\theta(Y^\theta_{s-}+Z_{s-})|^2+3K^2E[|\theta(Y^\theta_{s}+Z_{s})|^2]+6K^2\theta^2|v_s|^2ds\Big]
  \end{align*}
  \begin{align*}
  \hspace{2cm}&\leq K_1\int_0^TE[\sup_{t\in[0,s]}|\theta(Y^\theta_t+Z_t)|^2]ds+\theta^2K_2\lVert v\rVert_{\Hh^{\FF}}^2.
 \end{align*}
By an argument in the proof of Theorem \ref{ExistenceAndUniquenessmfSDE}, the function $s\longmapsto E[\sup_{t\in[0,s]}|\theta(Y^\theta_t+Z_t)|^2]=E[\sup_{t\in[0,s]}|X^\theta_t-X^{\hat u}_t|^2]$ is integrable and we can apply Gronwall's inequality to get
 \begin{align*}
  E[\sup_{t\in[0,T]}|\theta(Y^\theta_{t}+Z_{t})|^2]\leq\theta^2K_2\lVert v\rVert_{\Hh^{\FF}}^2e^{K_1T}\rightarrow0\text{ as }\theta\rightarrow0.
 \end{align*}
 Moreover, by Lipschitzianity of $\partial_xb$ (see $(E2')$),
 \begin{align*}
  &E\Big[\sup_{t\in[0,T]}\Big|\int_0^1(\partial_xb^{l\cdot\theta}_t-\partial_x\hat b_t)dl\Big|^2\Big]\\
  %&\quad\leq E\Big[\sup_{t\in[0,T]}\int_0^1\Big|\partial_xb(t,\lambda_t,X^{\hat u}_{t-}+l\theta(Y^\theta_{t-}+Z_{t-}),E[X^{\hat u}_t+l\theta(Y^\theta_t+Z_t)],\hat u_t+l\theta v_t)\\
  %&\quad\quad\quad-\partial_xb(t,\lambda_t,X^{\hat u}_{t-},E[X^{\hat u}_t],\hat u_t)\Big|^2dl\Big]\\
  &\quad\leq E\Big[\sup_{t\in[0,T]}3L^2\int_0^1|l\theta(Y^\theta_{t-}+Z_{t-})|^2+E[|l\theta(Y^\theta_t+Z_t)|^2]+l^2\theta^2|v_t|^2dl\Big]\\
  %&\quad\leq E\Big[\sup_{t\in[0,T]}L^2(|\theta(Y^\theta_{t-}+Z_{t-})|^2+E[|\theta(Y^\theta_t+Z_t)|^2]+\theta^2|v_t|^2)\Big]\\
  &\quad\leq L^2E[\sup_{t\in[0,T]}|\theta(Y^\theta_{t}+Z_{t})|^2]+\theta^2L^2\lVert v\rVert_{\Hh^{\FF}}^2\rightarrow0\text{ as }\theta\rightarrow0.
 \end{align*}
 This shows that $\sup_{t\in[0,T]}|\int_0^1(\partial_xb^{l\cdot\theta}_t-\partial_x\hat b_t)dl|^2$ vanishes  in $L^1$ and therefore also in probability. 
From the fist part of this proof we have that $Z\in \Ss^\FF_2$, hence the continuous mapping theorem yields
 \begin{align*}
  \sup_{t\in[0,T]}\Big(\Big|\int_0^1(\partial_xb^{l\cdot\theta}_t-\partial_x\hat b_t)dl\Big|^2\Big)\cdot\sup_{t\in[0,T]}|Z_t|^2\stackrel{P}{\rightarrow}0,\text{ as }\theta\rightarrow0,
 \end{align*}
 Moreover, note that the family $\sup_{t\in[0,T]}\big(\big|\int_0^1(\partial_xb^{l\cdot\theta}_t-\partial_x\hat b_t)dl\big|^2\big)\cdot \sup_{t\in[0,T]}|Z_t|^2$, ${\theta\in(0,\delta)}$  (for $\delta$ small) is uniformly integrable. This follows from the boundedness of $\partial_xb$ and the fact that $Z\in \Ss^\FF_2$. 
%  \begin{align*}
%   &\lim_{M\rightarrow\infty}\sup_{\theta\in(0,\delta)}E\Big[\underbrace{\sup_{t\in[0,T]}\Big(\Big|\int_0^1(\partial_xb^{l\cdot\theta}_t-\partial_x\hat b_t)dl\Big|^2\Big)}_{\leq4K^2,\text{ by ass. (A1)}}\cdot\sup_{t\in[0,T]}|Z_t|^2\\
%   &\hspace{3cm}\cdot\ind_{\underbrace{\Big\{\sup_{t\in[0,T]}\Big(\Big|\int_0^1(\partial_xb^{l\cdot\theta}_t-\partial_x\hat b_t)dl\Big|^2\Big)\cdot\sup_{t\in[0,T]}|Z_t|^2>M\Big\}}_{\subseteq\{4K\sup_{t\in[0,T]}|Z_t|^2>M\}}}\Big]\\
%   &\leq\lim_{M\rightarrow\infty}\sup_{\theta\in(0,\delta)}E[4K^2\sup_{t\in[0,T]}|Z_t|^2\ind_{\{4K\sup_{t\in[0,T]}|Z_t|^2>M\}}]\\
%   &=0,
%  \end{align*}
%as $\sup_{t\in[0,T]}|Z_t|^2\in L^2(\Omega)$.
Then we can apply Vitali's theorem and get
 \begin{align*}
  \sup_{t\in[0,T]}\Big(\Big|\int_0^1(\partial_xb^{l\cdot\theta}_t-\partial_x\hat b_t)dl\Big|^2\Big)\cdot\sup_{t\in[0,T]}|Z_t|^2\rightarrow0,\text{ in }L^1(\Omega)\text{ as }\theta\rightarrow0
 \end{align*}
 So all together, for $\theta\rightarrow0$,
 \begin{align*}
  E\Big[\int\limits_0^T\Big(\int\limits_0^1(\partial_xb^{l\cdot\theta}_t-\partial_x\hat b_t) Z_{t-}dl\Big)^2dt\Big]&\leq TE\Big[\sup_{t\in[0,T]}\big(\big|\int_0^1(\partial_xb^{l\cdot\theta}_t-\partial_x\hat b_t)dl\big|^2\big)\cdot\sup_{t\in[0,T]}|Z_t|^2\Big]
  \rightarrow0.
 \end{align*}
 The same arguments apply to all the other terms in the dynamics of $Y^\theta$ that contain a difference. Therefore, by the boundedness of the derivatives assumed in $(E2')$, we have
 \begin{align*}
  E[\sup_{t\in[0,T]}|Y^\theta_t|^2]&\leq G(\theta)+3E\Big[\sup_{t\in[0,T]}\Big|\int_0^t\int\limits_0^1\partial_xb^{l\cdot\theta}_s\cdot Y^\theta_{s-}dl+\int\limits_0^1\partial_yb^{l\cdot\theta}_sE[Y^\theta_s]dlds\Big|^2\Big]\\
  &\quad+3E\Big[\sup_{t\in[0,T]}\Big|\int_0^t \int\limits_\R\int\limits_0^1\partial_x\kappa^{l\cdot\theta}_s(z)\cdot Y^\theta_{s-}dl+\int\limits_0^1\partial_y\kappa^{l\cdot\theta}_s(z)E[Y^\theta_s]dl\mu(ds,dz)  \Big|^2\Big]\\
  &\leq G(\theta)+3K^2(T\vee C_1)\int_0^TE[\sup_{t\in[0,s]}|Y^\theta_t|^2]ds,
 \end{align*}
 where $G(\theta)$ contains all the terms in \eqref{DynamicsYtheta} that contain a difference and therefore vanishes as $\theta$ goes to zero.
%  \begin{align*}
%   G(\theta)&=E\Big[\sup_{t\in[0,T]}\Big|\int_0^t\int\limits_0^1(\partial_xb^{l\cdot\theta}_s-\partial_x\hat b_s) Z_{s-}dl+\int\limits_0^1(\partial_yb^{l\cdot\theta}_s-\partial_y\hat b_s)E[Z_s]dl+\int\limits_0^1(\partial_ub^{l\cdot\theta}_s-\partial_u\hat b_s)\cdot v_sdlds\\
%   &\quad+\int_0^t\int\limits_\R\Big\{\int\limits_0^1(\partial_x\kappa^{l\cdot\theta}_s(z)-\partial_x\hat\kappa_s(z)) Z_{s-}dl+\int\limits_0^1(\partial_y\kappa^{l\cdot\theta}_s(z)-\partial_y\hat \kappa_s(z))E[Z_s]dl\\
%   &\quad\quad+\int\limits_0^1(\partial_u\kappa^{l\cdot\theta}_s(z)-\partial_u\hat\kappa_s(z))\cdot v_sdl\Big\}\mu(ds,dz)\Big|^2\Big]\\
%   &\rightarrow0,\text{ by what we have just shown for the first summand}.
%  \end{align*}
 Furthermore, by Theorem \ref{ExistenceAndUniquenessmfSDE}, we have that, for each $\theta>0$,  $\lVert X^\theta\rVert_{S_2}<\infty$, $\lVert X^{\hat u}\rVert_{\Ss_2}<\infty$, and $\lVert Z\rVert_{\Ss_2}<\infty$.
 This implies that also $\lVert Y^\theta\rVert_{\Ss_2}<\infty$. Since $E[\sup_{t\in[0,s]}|Y^\theta_t|^2]\leq\lVert Y^\theta\rVert_{\Ss_2}$, we see that the function $s\longmapsto E[\sup_{t\in[0,s]}|Y^\theta_t|^2]$ is integrable on $[0,T]$. From Gronwall's inequality we obtainthat $E[\sup_{t\in[0,T]}|Y^\theta_t|^2]\rightarrow0$, as $\theta\rightarrow0$, see \eqref{convergence}.
\end{proof}

\begin{remark}
 The last lemma shows that the process $Z$ actually corresponds to
 \begin{align*}
  Z_t:=\frac{d}{d\theta}X^{\hat u+\theta v}_t\Big|_{\theta=0},
 \end{align*}
\end{remark}

\begin{lemma}\label{DerivativeOfJ}
  Let Assumptions \ref{MainAssumptions} be satisfied and let $\hat u\in\A$ be an optimal control. Moreover, let $v\in\Hh^{\FF}$ such that for $\theta$ small enough, $\hat u+\theta v\in\A$. Then 
  \begin{align}\label{DerivativeOfJeq}
   &\frac{d}{d\theta}J(\hat u+\theta v)|_{\theta=0}\\
   &\quad=E\Big[\int\limits_0^T\partial_x\hat f_s\cdot Z_s+\partial_y\hat f_s\cdot E[\partial_x\hat\varphi_s\cdot Z_s]+\partial_u\hat f_s\cdot v_sds+\partial_x\hat g_T\cdot Z_T+\partial_y\hat g_T\cdot E[\partial_x\hat\chi_T\cdot Z_T]\Big].\nonumber
  \end{align}
\end{lemma}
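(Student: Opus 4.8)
The plan is to differentiate under the expectation and integral signs, reducing the computation to the limit \eqref{convergence} of Lemma \ref{DerivativeProcesswrtuPsiPhiAffine}. By hypothesis $\hat u+\theta v\in\A$ for $\theta$ small, so $X^{\hat u+\theta v}$ is the $\Ss^\FF_2$-solution of \eqref{ControlledDynamics} provided by Lemma \ref{ExistenceAndUniquenessSDEForOurOptimizationProblem}. Put $\Delta^\theta_s:=X^{\hat u+\theta v}_s-X^{\hat u}_s$. Interpolating linearly in a parameter $l\in[0,1]$ between the arguments at $\hat u$ and at $\hat u+\theta v$ and using that $f\in C^1$ (assumption $(O1)$), the fundamental theorem of calculus gives
\begin{align*}
&\frac{f(s,\lambda_s,X^{\hat u+\theta v}_{s-},E[\varphi(X^{\hat u+\theta v}_s)],\hat u_s+\theta v_s)-\hat f_s}{\theta}\\
&\qquad=\int_0^1\Bigl(\partial_x f^{l,\theta}_s\,\frac{\Delta^\theta_{s-}}{\theta}+\partial_y f^{l,\theta}_s\,\frac{E[\varphi(X^{\hat u+\theta v}_s)]-E[\hat\varphi_s]}{\theta}+\partial_u f^{l,\theta}_s\,v_s\Bigr)\,dl,
\end{align*}
where $f^{l,\theta}_s$ denotes $f$ evaluated at the interpolated arguments, and an analogous identity holds at time $T$ for $g$ with interpolation only in $(x,y)$. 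Hence $\theta^{-1}\bigl(J(\hat u+\theta v)-J(\hat u)\bigr)$ is the expectation of $\int_0^T(\cdots)\,ds$ plus the terminal analogue, and it suffices to show that the running integrand converges in $L^1(\Omega\times[0,T])$ to $\partial_x\hat f_s Z_s+\partial_y\hat f_s E[\partial_x\hat\varphi_s Z_s]+\partial_u\hat f_s v_s$ and the terminal term in $L^1(\Omega)$ to $\partial_x\hat g_T Z_T+\partial_y\hat g_T E[\partial_x\hat\chi_T Z_T]$; then $L^1$-convergence of the integrand immediately yields convergence of the integrals, i.e.\ \eqref{DerivativeOfJeq}.

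For the individual factors: by \eqref{convergence}, $\theta^{-1}\Delta^\theta\to Z$ in $\Ss^\FF_2$, so $\theta^{-1}\Delta^\theta_{s-}\to Z_{s-}$ in $L^2(\Omega\times[0,T])$ and $\theta^{-1}\Delta^\theta_T\to Z_T$ in $L^2(\Omega)$. For the mean-field term one writes $\varphi(X^{\hat u+\theta v}_s)-\hat\varphi_s=\bigl(\int_0^1\partial_x\varphi(X^{\hat u}_s+l\Delta^\theta_s)\,dl\bigr)\,\Delta^\theta_s$; since $\partial_x\varphi$ is Lipschitz ($(O3)$) and $\Delta^\theta\to0$ in $\Ss^\FF_2$, the bracket tends to $\partial_x\hat\varphi_s$ in $L^2$, hence $\theta^{-1}\bigl(E[\varphi(X^{\hat u+\theta v}_s)]-E[\hat\varphi_s]\bigr)\to E[\partial_x\hat\varphi_s Z_s]$ uniformly in $s$, and likewise $\theta^{-1}\bigl(E[\chi(X^{\hat u+\theta v}_T)]-E[\hat\chi_T]\bigr)\to E[\partial_x\hat\chi_T Z_T]$. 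Finally, using the dichotomy in $(O7)$ and the growth bounds of Remark \ref{AlternativeA9}: if $\partial_i f$ is Lipschitz (uniformly in $(s,\lambda)$), then $|\partial_i f^{l,\theta}_s-\partial_i\hat f_s|\le L\bigl(|l\Delta^\theta_{s-}|+|y^{l,\theta}_s-E[\hat\varphi_s]|+|l\theta v_s|\bigr)$ gives $\partial_i f^{l,\theta}\to\partial_i\hat f$ in $L^2(\Omega\times[0,T])$, uniformly in $l$; if instead $\partial_i f$ is bounded and $s$-independent, the same convergence follows by bounded convergence together with the continuity of $\partial_i f$ and $\Delta^\theta\to0$ in $\Ss^\FF_2$. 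The same argument gives $\partial_i g^{l,\theta}_T\to\partial_i\hat g_T$ in $L^2(\Omega)$.

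It remains to combine these convergences. Products of $L^2$-convergent sequences that are bounded in $L^2$ converge in $L^1$, so $\partial_x f^{l,\theta}_s\,\theta^{-1}\Delta^\theta_{s-}\to\partial_x\hat f_s Z_s$ and $\partial_u f^{l,\theta}_s v_s\to\partial_u\hat f_s v_s$ in $L^1(\Omega\times[0,T])$ (here $v\in\Hh^\FF\subset L^2(\Omega\times[0,T])$); for the mean-field factor, $\partial_y f^{l,\theta}_s$ is multiplied by the deterministic sequence $\theta^{-1}(E[\varphi(X^{\hat u+\theta v}_s)]-E[\hat\varphi_s])$, which converges uniformly in $s$ and is therefore uniformly bounded, so the product converges to $\partial_y\hat f_s E[\partial_x\hat\varphi_s Z_s]$ in $L^1(\Omega\times[0,T])$, using $\partial_y\hat f\in L^2(\Omega\times[0,T])$ (by $(A2)$) and $\sup_s|E[\partial_x\hat\varphi_s Z_s]|<\infty$. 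All these estimates are uniform in $l\in[0,1]$, so integrating $dl$ over $[0,1]$ preserves the $L^1$-limits and identifies the limit of the running integrand. The terminal term is treated identically, invoking $(O5)$ so that $\partial_x\hat g_T\in L^2(\Omega)$ and $\partial_y\hat g_T\in L^1(\Omega)$, whence $\partial_x\hat g_T Z_T$ and $\partial_y\hat g_T E[\partial_x\hat\chi_T Z_T]$ are integrable. Adding the running and terminal limits yields \eqref{DerivativeOfJeq}. Modulo \eqref{convergence} the proof is essentially a chain-rule computation, so the only delicate points are the treatment of the nested mean-field expectations and the fact that $\partial_i f$ need not be globally bounded; both are resolved by the $(O7)$-dichotomy and by the uniform-in-$s$ estimate for the mean-field increments — this is where I expect the main care is required.
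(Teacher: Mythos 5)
Your proposal is correct and follows essentially the same route as the paper's proof: a first-order Taylor expansion of $f$ and $g$ around the optimal trajectory (you use the integral form of the mean value theorem, the paper uses the pointwise form with random intermediate points $\alpha_s,\beta_s,\gamma,\delta$), followed by term-by-term $L^1$-convergence driven by the key limit \eqref{convergence} of Lemma \ref{DerivativeProcesswrtuPsiPhiAffine}, the Lipschitz/bounded dichotomy of $(O7)$ (the paper phrases the bounded case via the continuous mapping theorem plus Vitali, which is the rigorous version of your ``bounded convergence'' step), and H\"older-type product estimates. No substantive difference or gap.
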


\begin{proof}
 Define a process $Z$ as in Lemma \ref{DerivativeProcesswrtuPsiPhiAffine}. We have that
 \begin{align*}
   &\frac{d}{d\theta}J(\hat u+\theta v)|_{\theta=0}\\
   %&=\frac{d}{d\theta}E\Big[\int\limits_0^Tf(s,\lambda_s,X^{\hat u+\theta v}_{s-},E[\varphi(X^{\hat u+\theta v}_s)],\hat u_s+\theta v_s)ds+g(X^{\hat u+\theta v}_{T},E[\chi(X^{\hat u+\theta v}_{T})])\Big]\Big|_{\theta=0}\\
   &=\lim_{\theta\rightarrow0}E\Big[\int\limits_0^T\frac{f(s,\lambda_s,X^{\hat u+\theta v}_{s-},E[\varphi(X^{\hat u+\theta v}_s)],\hat u_s+\theta v_s)-f(s,\lambda_s,X^{\hat u}_{s-},E[\varphi(X^{\hat u}_s)],\hat u_s)}{\theta}ds\\
   &\quad\quad\quad+\frac{g(X^{\hat u+\theta v}_{T},E[\chi(X^{\hat u+\theta v}_{T})])-g(X^{\hat u}_{T},E[\chi(X^{\hat u}_{T})])}{\theta}\Big].
  \end{align*}
 An application of the mean value theorem as in the proof of Lemma \ref{ExistenceAndUniquenessSDEForOurOptimizationProblem} yields the existence of stochastic processes $(\alpha_s)_{s\in[0,1]}$, $(\beta_s)_{s\in[0,1]}$ and random variables $\gamma$ and $\delta$, with values in $[0,1]$, s.t.
  \begin{align*}
   &\frac{d}{d\theta}J(\hat u+\theta v)|_{\theta=0}\\
   &=\lim_{\theta\rightarrow0}E\Big[\int\limits_0^T\frac{1}{\theta}\Big\langle\nabla_{x,y,u}f^{\alpha}_s,\begin{pmatrix}
X^{\hat u+\theta v}_{s-}-X^{\hat u}_{s-}\\                                                                                                                        
E[\partial_x\varphi^\beta_s(X^{\hat u+\theta v}_s-X^{\hat u}_s)]\\
 \theta v_s
 \end{pmatrix}
\Big\rangle ds+\frac{1}{\theta}\Big\langle\nabla g^\gamma,\begin{pmatrix}
 X^{\hat u+\theta v}_T-X^{\hat u}_T\\                                                                                                                        
E[\partial_x\chi^\delta(X^{\hat u+\theta v}_T-X^{\hat u}_T)]
\end{pmatrix}
\Big\rangle\Big]\\
&=\lim_{\theta\rightarrow0}E\Big[\int\limits_0^T\partial_xf^{\alpha}_s\cdot\frac{X^{\hat u+\theta v}_{s-}-X^{\hat u}_{s-}}{\theta}+\partial_yf^{\alpha}_s\cdot E\Big[\partial_x\varphi^\beta_s\cdot\frac{X^{\hat u+\theta v}_s-X^{\hat u}_s}{\theta}\Big]+\partial_uf^{\alpha}_s\cdot v_sds\\
&\quad+\partial_xg^\gamma\cdot\frac{X^{\hat u+\theta v}_{T}-X^{\hat u}_{T}}{\theta}+\partial_yg^\gamma\cdot E\Big[\partial_x\chi^\delta\cdot\frac{X^{\hat u+\theta v}_{T}-X^{\hat u}_{T}}{\theta}\Big]\Big],
  \end{align*}
where
\vspace{-2mm}
\begin{align*}
 f^{\alpha}_s&:=f(s,\lambda_s,X^{\hat u+\alpha_s\theta v}_{s-},E[\varphi(X^{\hat u+\alpha_s\theta v}_s)],\hat u_s+\alpha_s\theta v_s)\\
 \varphi^\beta_s&:=\varphi(X^{\hat u+\beta_s\theta v}_s)\\
 g^\gamma&:=g(X^{\hat u+\gamma\theta v}_{T},E[\chi(X^{\hat u+\gamma\theta v}_{T})])\\
 \chi^\delta&:=\chi(X^{\hat u+\delta\theta v}_{T}).
\end{align*}
Now we prove convergence of the five terms on the right-hand side to the corresponding ones in \eqref{DerivativeOfJeq}. 
As illustration we develop the computations for the first term. Consider the sum
\begin{align}\label{Telescopesum}
 &E\Big[\int\limits_0^T\partial_xf^{\alpha}_s\cdot\frac{X^{\hat u+\theta v}_{s-}-X^{\hat u}_{s-}}{\theta}ds\Big]\\
 &=E\Big[\int\limits_0^T\Big(\partial_xf^{\alpha}_s\cdot\frac{X^{\hat u+\theta v}_{s-}-X^{\hat u}_{s-}}{\theta}-\partial_xf^{\alpha}_s\cdot Z_{s-}\Big)+\Big(\partial_xf^{\alpha}_s\cdot Z_{s-}-\partial_x\hat f_s\cdot Z_{s-}\Big)+\partial_x\hat f_s\cdot Z_sds\Big]\nonumber
\end{align}
The first summand vanishes. In fact, by Holder's inequality
\begin{align*}
 &E\Big[\int\limits_0^T\Big|\partial_xf^{\alpha}_s\cdot\frac{X^{\hat u+\theta v}_{s-}-X^{\hat u}_{s-}}{\theta}-\partial_xf^{\alpha}_s\cdot Z_{s-}\Big|ds\Big]\\
 &\quad\leq {\Big({E\Big[\int\limits_0^T\Big|\partial_xf^{\alpha}_s\Big|^2ds\Big]} \Big)^{1/2}} \cdot {\Big({E\Big[\sup_{s\in[0,T]}\Big|\frac{X^{\hat u+\theta v}_{s-}-X^{\hat u}_{s-}}{\theta}-Z_{s-}\Big|^2\Big]}\Big)^{1/2}} = (I) \cdot (II).
\end{align*}
Thanks to $(O7)$, the first factor is either bounded or, if $\partial_xf^{\alpha}_s$ is Lipschitz in $x$, $y$ and $z$, then
\begin{align*}
 &E\Big[\int\limits_0^T\Big|\partial_xf^{\alpha}_s\Big|^2ds\Big]\leq E\Big[\int\limits_0^T2\Big|\partial_x\hat f_s\Big|^2+2\Big|\partial_xf^{\alpha}_s-\partial_x\hat f_s\Big|^2ds\Big]\\
 &\quad\leq E\Big[\int\limits_0^T6M^2\Big(1+|X^{\hat u}_{s-}|^2+|E[\varphi(X^{\hat u}_s)]|^2+|\hat u_{s}|^2\Big)\\
 &\quad\quad+6L_{\partial_x f}\Big(|X^{\hat u}_{s-}-X^{\hat u+\alpha_s\theta v}_{s-}|^2+|E[\varphi(X^{\hat u}_s)]-E[\varphi(X^{\hat u+\alpha_s\theta v}_s)]|^2+|\alpha_s\theta v_s|^2\Big)ds\Big]\\
%  &\quad\leq 6M^2T(1+\lVert X^{\hat u}\rVert_{\Ss^\FF_2}^2+3|\varphi(0)|^2+3|\partial_x\varphi(0)|^2\lVert X^{\hat u}\rVert_{\Ss^\FF_2}^2+3L_{\partial_x\varphi}^2\lVert X^{\hat u}\rVert_{\Ss^\FF_2}^4+\lVert\hat u\rVert_{\Hh^\FF})\\
%  &\quad\quad+6L_{\partial_x f}T(\lVert X^{\hat u}-X^{\hat u+\alpha_s\theta v}\rVert_{\Ss^\FF_2}^2+3|\partial_x\varphi(0)|^2\lVert X^{\hat u}-X^{\hat u+\alpha_s\theta v}\rVert_{\Ss^\FF_2}^2\\
%  &\quad\quad+3L_{\partial_x\varphi}^2\lVert X^{\hat u}\rVert_{\Ss^\FF_2}^2\lVert X^{\hat u}-X^{\hat u+\alpha_s\theta v}\rVert_{\Ss^\FF_2}^2+3L_{\partial_x\varphi}^2\lVert X^{\hat u}-X^{\hat u+\alpha_s\theta v}\rVert_{\Ss^\FF_2}^4+\theta^2\lVert v\rVert_{\Hh^\FF})
 &\quad\leq C_1+C_2\lVert X^{\hat u}\rVert_{\Ss^\FF_2}^4+C_3\lVert X^{\hat u}-X^{\hat u+\alpha_s\theta v}\rVert_{\Ss^\FF_2}^4+\lVert\hat u\rVert_{\Hh^\FF}^2+\theta^2\lVert v\rVert_{\Hh^\FF}^2.
\end{align*}
Here we used that
\begin{align*}
 %\varphi(x_1)-\varphi(x_2)&\leq\partial_x\varphi(x_2)\cdot(x_1-x_2)\text{ and }\\
 %\varphi(x_2)-\varphi(x_1)&\leq\partial_x\varphi(x_1)\cdot(x_2-x_1)\text{ and thus}\\
 |\varphi(x_1)-\varphi(x_2)|&\leq(|\partial_x\varphi(x_1)|\vee|\partial_x\varphi(x_2)|)|x_1-x_2|\leq(|\partial_x\varphi(x_1)|+L_{\partial_x\varphi}|x_2-x_1|)|x_1-x_2|\\
 &\leq(|\partial_x\varphi(0)|+L_{\partial_x\varphi}|x_1|+L_{\partial_x\varphi}|x_2-x_1|)|x_1-x_2|,
\end{align*}
, as $\varphi$ is concave and $\partial_x\varphi$ is Lipschitz, this implies that
\begin{align*}
 |E[\varphi(X^{\hat u}_s)]|^2&\leq (|\varphi(0)|+E[|\varphi(0)-\varphi(X^{\hat u}_s)|])^2\\
 &\leq(|\varphi(0)|+|\partial_x\varphi(0)|E[||X^{\hat u}_s|]+L_{\partial_x\varphi}E[||X^{\hat u}_s|^2])^2\\
 &\leq 3|\varphi(0)|^2+3|\partial_x\varphi(0)|^2\lVert X^{\hat u}\rVert_{\Ss^\FF_2}^2+3L_{\partial_x\varphi}^2\lVert X^{\hat u}\rVert_{\Ss^\FF_2}^4
\end{align*}
and accordingly
\begin{align}
 |E[\varphi(X^{\hat u}_s)] &-E[\varphi(X^{\hat u+\alpha_s\theta v}_s)]|^2
 \leq3|\partial_x\varphi(0)|^2\lVert X^{\hat u}-X^{\hat u+\alpha_s\theta v}\rVert_{\Ss^\FF_2}^2 \notag \\
 &\quad+3L_{\partial_x\varphi}^2\lVert X^{\hat u}\rVert_{\Ss^\FF_2}^2\lVert X^{\hat u}-X^{\hat u+\alpha_s\theta v}\rVert_{\Ss^\FF_2}^2
 +3L_{\partial_x\varphi}^2\lVert X^{\hat u}-X^{\hat u+\alpha_s\theta v}\rVert_{\Ss^\FF_2}^4
 \label{servizio}
\end{align}
Being $\lVert X^{\hat u}-X^{\hat u+\alpha_s\theta v}\rVert_{\Ss^\FF_2}\rightarrow0$, then $(I)$ is bounded. Lemma \ref{DerivativeProcesswrtuPsiPhiAffine} yields the convergence of $(II)$ towards zero as $\theta\rightarrow0$. Therefore, the first summand in \eqref{Telescopesum} vanishes.

\vspace{2mm}
Now we consider the second summand in \eqref{Telescopesum}. By Holder's inequality we have
\begin{align*}
 E\Big[\int\limits_0^T(\partial_xf^{\alpha}_s-\partial_x\hat f_s)\cdot Z_{s-}ds\Big]\leq\lVert Z\rVert_{\Ss^\FF_2}\Big({E\Big[\int\limits_0^T|\partial_xf^{\alpha}_s-\partial_x\hat f_s|^2ds\Big]}\Big)^{1/2}.
\end{align*}
Note that in case $\partial_x f$ is Lipschitz, we get convergence by the same arguments as before. In case $\partial_x\hat f$ is only bounded and does not depend on $(s,\lambda)$, we have that $X^{\hat u+\alpha_s\theta v}\rightarrow X^{\hat u}$ in $\Ss^\FF_2$ and thus also w.r.t. the finite measure $\text{Leb}\otimes P$ and by the same argumentation $\hat u+\alpha_s\theta v$ converges w.r.t. $\text{Leb}\otimes P$ towards $\hat u$. Also, by similar arguments as before $E[\varphi(X^{\hat u+\alpha_s\theta v}_s)]\rightarrow E[\varphi(X^{\hat u}_s)]$ as $\theta\rightarrow0$. By Assumption $(O1)$, $\partial_x f$ is continuous and the continuous mapping theorem yields
\begin{align*}
 \partial_xf^{\alpha}\rightarrow\partial_x\hat f,\text{ as }\theta\rightarrow0\text{ in measure w.r.t. }Leb\otimes P.
\end{align*}
Moreover, the boundedness of $\partial_xf$ implies that the family $(|\partial_xf^{\alpha}-\partial_x\hat f_s|^2)_{\theta>0}$ is uniformly integrable w.r.t. $\text{Leb}\otimes P$. Therefore, Vitali's theorem yields that
\begin{align*}
 E\Big[\int\limits_0^T|\partial_xf^{\alpha}_s-\partial_x\hat f_s|^2ds\Big]\rightarrow0,\text{ as }\theta\rightarrow0.
\end{align*}
This proves that also the second summand in \eqref{Telescopesum} converges to $0$ and so we get
\begin{align*}
 &E\Big[\int\limits_0^T\partial_xf^{\alpha}_s\cdot\frac{X^{\hat u+\theta v}_{s-}-X^{\hat u}_{s-}}{\theta}ds\Big]\rightarrow E\Big[\int\limits_0^T\partial_x\hat f_s\cdot Z_sds\Big].
\end{align*}
The same applies to all the other terms in the representation of $\frac{d}{d\theta}J(\hat u+\theta v)|_{\theta=0}$.
\end{proof}

\begin{lemma}\label{DisturbedStrategy}
 Let $\hat u\in\A$ be an optimal control and let $v\in U$ be fixed. Moreover, let $t_1<t_2\in[0,T]$ and $S\in\F_{t_1}$. The strategy
 \begin{align}
  u_t(\omega)=\hat u_t(\omega)\ind_{[0,t_1]\cup(t_2,T]}(t)+v\ind_{S\times(t_1,t_2]}(\omega,t)+\hat u_t(\omega)\ind_{S^c\times(t_1,t_2]}(\omega,t)
 \end{align}
 is an admissible strategy.
\end{lemma}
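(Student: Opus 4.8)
The plan is to check, one by one, the conditions of Definition~\ref{Admissibility}. First I would verify that $u$ is an $\FF$-predictable process with values in $U$. Writing $u_t(\omega)=\hat u_t(\omega)\ind_{[0,t_1]\cup(t_2,T]}(t)+v\,\ind_{(t_1,t_2]}(t)\ind_S(\omega)+\hat u_t(\omega)\ind_{(t_1,t_2]}(t)\ind_{S^c}(\omega)$, each indicator is $\PF$-measurable: the sets $[0,t_1]\cup(t_2,T]$ and $(t_1,t_2]$ generate predictable rectangles, and since $S\in\F_{t_1}$ also $(t_1,t_2]\times S$ and $(t_1,t_2]\times S^c$ are predictable; as $\hat u$ is $\FF$-predictable, $u$ is a finite sum of products of $\FF$-predictable factors, hence $\FF$-predictable. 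Moreover $|u_t|\le|\hat u_t|+|v|$, so $E[\sup_{t\le T}|u_t|^2]\le 2E[\sup_{t\le T}|\hat u_t|^2]+2|v|^2<\infty$ because $\hat u\in\A\subseteq\Hh^{\FF}$, and since $\hat u$ and the constant $v$ take values in the convex set $U$, so does $u$. Thus $u\in\Hh^{\FF}$, and Lemma~\ref{ExistenceAndUniquenessSDEForOurOptimizationProblem} applies, giving a unique solution $X^u\in\Ss^\FF_2$ of \eqref{ControlledDynamics}, in particular $E[\sup_{t\le T}|X^u_t|^2]<\infty$.

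Next I would establish $(A1)$ by comparison with $\hat u$. Applying the quadratic increment bound $(O6)$ with base point $(X^{\hat u}_{s-},E[\varphi(X^{\hat u}_s)],\hat u_s)$ gives
$$
|f(s,\lambda_s,X^u_{s-},E[\varphi(X^u_s)],u_s)-f(s,\lambda_s,X^{\hat u}_{s-},E[\varphi(X^{\hat u}_s)],\hat u_s)|\le M\,G_s\,D_s,
$$
where $G_s$ grows linearly in $|X^u_{s-}|,|X^{\hat u}_{s-}|,|E[\varphi(X^u_s)]|,|E[\varphi(X^{\hat u}_s)]|,|u_s|,|\hat u_s|$ and $D_s=|X^u_{s-}-X^{\hat u}_{s-}|+|E[\varphi(X^u_s)]-E[\varphi(X^{\hat u}_s)]|+|u_s-\hat u_s|$. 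Using concavity of $\varphi$ and Lipschitz continuity of $\partial_x\varphi$ ($(O2)$, $(O3)$) exactly as in the proof of Lemma~\ref{DerivativeOfJ} (cf.\ the estimate \eqref{servizio}), the mean-field quantities are controlled by powers of $\lVert X^u\rVert_{\Ss^\FF_2}$ and $\lVert X^{\hat u}\rVert_{\Ss^\FF_2}$, so $G_s,D_s\in L^2(\Omega\times[0,T],\text{Leb}\otimes P)$ and hence $G_sD_s\in L^1$ by Cauchy--Schwarz. Since $\hat u\in\A$ makes $f(s,\lambda_s,X^{\hat u}_{s-},E[\varphi(X^{\hat u}_s)],\hat u_s)$ an element of $L^1(\Omega\times[0,T])$, the triangle inequality yields the first half of $(A1)$; the terminal term $g(X^u_T,E[\chi(X^u_T)])\in L^1(\Omega)$ follows by the same argument applied to $g$ at time $T$, using $(O6)$ and $g(X^{\hat u}_T,E[\chi(X^{\hat u}_T)])\in L^1(\Omega)$.

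For $(A2)$ I would use Remark~\ref{AlternativeA9}, which converts $(O6)$ into the linear-growth bounds $|\partial_if(s,\lambda,x,y,u)|\le M(1+2|x|+2|y|+2|u|)$ for $i=x,y,u$ and $|\partial_ig(x,y)|\le M(1+2|x|+2|y|)$ for $i=x,y$. Evaluating at $(X^u_{s-},E[\varphi(X^u_s)],u_s)$, respectively $(X^u_T,E[\chi(X^u_T)])$, the right-hand sides are square integrable over $\Omega\times[0,T]$ (resp.\ $\Omega$), because $X^u\in\Ss^\FF_2$, $u\in\Hh^{\FF}$, and the deterministic mean-field terms $E[\varphi(X^u_s)]$, $E[\chi(X^u_T)]$ are finite by the bound recalled in the previous paragraph. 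Hence $(A2)$ holds and $u\in\A$.

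All the analytic estimates here are already present, in essentially the same form, in the proofs of Lemmata~\ref{ExistenceAndUniquenessSDEForOurOptimizationProblem} and \ref{DerivativeOfJ}; the only point requiring genuine care is the measurability of $u$, and it is precisely to ensure $\FF$-predictability that one assumes $S\in\F_{t_1}$ rather than merely $S\in\F$. So I expect no real obstacle, just bookkeeping against Definition~\ref{Admissibility}.
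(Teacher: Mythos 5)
Your proposal is correct and follows essentially the same route as the paper: check membership in $\Hh^{\FF}$ and the $U$-valuedness, then use $(O6)$, Remark \ref{AlternativeA9} and the estimate of type \eqref{servizio} to verify $(A1)$ and $(A2)$. The only cosmetic difference is that the paper records an explicit Gronwall bound $\lVert X^{\hat u}-X^u\rVert_{\Ss^\FF_2}^2\leq \tilde K(v^2+\lVert\hat u\rVert_{\Hh^\FF}^2)(t_2-t_1)e^{\tilde KT}$ to compare the two state processes, whereas you obtain the needed moment bounds by invoking Lemma \ref{ExistenceAndUniquenessSDEForOurOptimizationProblem} for $X^u\in\Ss^\FF_2$ directly; both yield the same integrability conclusions.
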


\begin{proof}
 We verify Definition \ref{Admissibility}. Clearly, $u\in\Hh^\FF$ and also, by construction, $u$ takes values in $U$. In order to check the integrability of $f$, $g$, $\partial_i f$ and $\partial_i g$, $i=x,y,u$, we use Gronwall's inequality obtaining
 \begin{align*}
  \lVert X^{\hat u}- X^u\rVert_{\Ss^\FF_2}^2\leq \tilde K(v^2+\lVert\hat  u\rVert_{\Hh^\FF}^2)(t_2-t_1)e^{\tilde KT}
 \end{align*}
 for some constant $\tilde K$. This, together with $(O6)$, Remark \ref{AlternativeA9}, and the relation of type \eqref{servizio} for 
$
  |E[\varphi(X^{\hat u}_s)]-E[\varphi(X^{u}_s)]|^2
$
prove the required integrability conditions.
\end{proof}

\begin{theorem}\label{NecessaryMaxPrinciple}
\begin{itemize}
 \item[(a)] Let $\hat u\in\A$ be an optimal control. Then, there exists a solution $(\hat p,\hat q)$ of the corresponding adjoint mean-field BSDE \eqref{adjointeq} such that for all $v\in U$:
 \begin{align}\label{NecessaryMaxPrincipleEq1}
  \partial_uH^\FF(t,\lambda_t,X^{\hat u}_{t-},E[\varphi(X^{\hat u}_t)],E[X^{\hat u}_t],\hat u_t,\hat p_{t-},\hat q_t)\cdot(v-u_t)\leq0.
 \end{align}
 \item[(b)] Let, on the other hand $\hat u\in\A$ such that
 \begin{align}\label{NecessaryMaxPrincipleEq2}
  \partial_uH^\FF(t,\lambda_t,X^{\hat u}_{t-},E[\varphi(X^{\hat u}_t)],E[X^{\hat u}_t],\hat u_t,\hat p_{t-},\hat q_t)=0.
 \end{align}
 Then, $\hat u$ is a critical point for $J$, i.e. $\frac{d}{d\theta}J(\hat u+\theta(u-\hat u))|_{\theta=0}=0$ for all $u\in\A$.
\end{itemize}
\end{theorem}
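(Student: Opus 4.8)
The plan is to reduce both parts to a single identity expressing the directional derivative of $J$ along an admissible perturbation in terms of $\partial_uH^\FF$, and then to read (a) and (b) off from it.

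\emph{Step 1: the key identity.} Let $\hat u\in\A$, let $(\hat p,\hat q)\in\Ss^\GG_2\times\Ii$ be the solution of the adjoint equation \eqref{adjointeq} (existing and unique by the lemma preceding Subsection~6.1), and let $v\in\Hh^\FF$ be a direction with $\hat u+\theta v\in\A$ for all small $\theta\ge 0$. With $Z$ the first-variation process of Lemma~\ref{DerivativeProcesswrtuPsiPhiAffine} (the $\Ss^\FF_2$-solution of \eqref{ZSDEaffine}), the claim is
\[
\frac{d}{d\theta}J(\hat u+\theta v)\Big|_{\theta=0}
 = E\Big[\int_0^T\partial_uH^\FF\big(s,\lambda_s,\hat X_{s-},E[\varphi(\hat X_s)],E[\hat X_s],\hat u_s,\hat p_{s-},\hat q_s\big)\,v_s\,ds\Big].
\]

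\emph{Step 2: proof of the identity.} I would start from the expression for $\tfrac{d}{d\theta}J(\hat u+\theta v)|_{\theta=0}$ in Lemma~\ref{DerivativeOfJ} (whose proof uses only $\hat u\in\A$ and the variation, not optimality), then apply the Itô product rule to $\hat p_tZ_t$, substitute the dynamics \eqref{ZSDEaffine} of $Z$ and \eqref{adjointeq} of $\hat p$, and take expectations: the stochastic integrals with respect to $\mu$ are martingales null at $0$ and drop out (integrability secured by $\hat p\in\Ss^\GG_2$, $\hat q\in\Ii$, $Z\in\Ss^\FF_2$ and the boundedness in $(E2')$, after localising by stopping times if needed), and with $Z_0=0$ and $\hat p_T=\partial_x\hat g_T+E[\partial_y\hat g_T]\partial_x\hat\chi_T$ this turns $E[\partial_x\hat g_TZ_T]+E[\partial_y\hat g_T]E[\partial_x\hat\chi_TZ_T]=E[\hat p_TZ_T]$ into a $dt$-integral. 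In that integral the terms carrying $\partial_x\hat b$ and $\partial_x\hat\kappa$ cancel directly against the drift terms of $d\hat p$; the terms carrying $\partial_y\hat b$, $\partial_y\hat\kappa$ and $\partial_y\hat f$ cancel after using \eqref{StandardTrickExpectations} together with the interchangeability of $\hat p_{t-}$ with $\hat p_t$ and of $Z_{t-}$ with $Z_t$ inside $dt$-integrals. Only the terms with $\partial_u\hat b$, $\partial_u\hat\kappa$ survive, and combining with Lemma~\ref{DerivativeOfJ} the term with $\partial_u\hat f$ is added while all contributions in $\partial_x\hat f,\partial_y\hat f,\partial_x\hat g,\partial_y\hat g$ cancel. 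Finally, since $v_s$, $\partial_u\hat b_s$, $\partial_u\hat\kappa_s(\cdot)$ are $\FF$-adapted whereas $\hat p,\hat q$ are only $\GG$-adapted, Fubini's theorem and the tower property replace $\hat p_{s-}$ by $E[\hat p_{s-}|\F_s]$ and $\hat q_s(z)$ by $E[\hat q_s(z)|\F_s]$, so the integrand becomes exactly $\partial_uH^\FF(s,\ldots,\hat u_s,\hat p_{s-},\hat q_s)\,v_s$.

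\emph{Step 3: part (b).} Assume $\partial_uH^\FF(t,\lambda_t,\hat X_{t-},E[\varphi(\hat X_t)],E[\hat X_t],\hat u_t,\hat p_{t-},\hat q_t)=0$. Fix $u\in\A$ and put $v:=u-\hat u\in\Hh^\FF$; by convexity of $U$, $\hat u+\theta v=(1-\theta)\hat u+\theta u$ takes values in $U$ for $\theta\in[0,1]$, and $\hat u+\theta v\in\A$ for small $\theta$ follows by the argument of Lemma~\ref{DisturbedStrategy} (using $(O6)$, $(O7)$, Remark~\ref{AlternativeA9} and $X^{\hat u+\theta v}\to X^{\hat u}$ in $\Ss^\FF_2$). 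Applying Step~1 with this $v$, the right-hand side equals $E[\int_0^T 0\cdot v_s\,ds]=0$, so $\tfrac{d}{d\theta}J(\hat u+\theta(u-\hat u))|_{\theta=0}=0$; as $u$ was arbitrary, $\hat u$ is a critical point of $J$.

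\emph{Step 4: part (a), and the main obstacle.} Assume $\hat u$ is optimal and let $(\hat p,\hat q)$ solve \eqref{adjointeq}. Fix $v\in U$, $0\le t_1<t_2\le T$, $S\in\F_{t_1}$, and let $u$ be the disturbed strategy of Lemma~\ref{DisturbedStrategy}, which is admissible; with $w_s:=(v-\hat u_s)\ind_{S\times(t_1,t_2]}(s,\cdot)\in\Hh^\FF$ one has $u=\hat u+w$ and $\hat u+\theta w=(1-\theta)\hat u+\theta u\in\A$ for $\theta\in[0,1]$. Optimality forces $\theta\mapsto J(\hat u+\theta w)$ to attain its maximum over $[0,1]$ at $\theta=0$, so its right derivative there is $\le 0$; by Step~1 this reads $E[\ind_S\int_{t_1}^{t_2}\Gamma^v_s\,ds]\le 0$ with $\Gamma^v_s:=\partial_uH^\FF(s,\ldots,\hat u_s,\hat p_{s-},\hat q_s)(v-\hat u_s)$. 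Since the rectangles $S\times(t_1,t_2]$ ($S\in\F_{t_1}$, $t_1<t_2$) generate the predictable $\sigma$-algebra $\PF$ on $[0,T]\times\Omega$ and non-negative $\PF$-measurable functions are increasing limits of non-negative linear combinations of such indicators, this yields $E[\int_0^T\Gamma^v_s\,w_s\,ds]\le 0$ for every bounded non-negative $\PF$-measurable $w$; taking $w$ to be the indicator of $\{\Gamma^v>0\}$ intersected with a bounded set gives $\Gamma^v_s\le 0$ $\Leb\otimes P$-a.e., which is \eqref{NecessaryMaxPrincipleEq1} (with $\hat u$ playing the role of the control there). The decisive and most delicate step is the Itô/integration-by-parts computation of Step~2 — in particular keeping the mean-field couplings straight via \eqref{StandardTrickExpectations} and converting $\hat p,\hat q$ into their $\F_t$-conditional projections so that $H^\FF$ rather than $H$ appears; the admissibility checks in Step~3 and the measure-theoretic localisation in Step~4 are routine by comparison.
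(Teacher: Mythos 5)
Your proposal is correct and follows essentially the same route as the paper: the derivative formula of Lemma \ref{DerivativeOfJ} combined with the It\^o product rule applied to $\hat p\, Z$, the cancellations via \eqref{StandardTrickExpectations}, and the projection of $\hat p,\hat q$ onto $\FF$ by Fubini and the tower property yield exactly the identity $\frac{d}{d\theta}J(\hat u+\theta v)\big|_{\theta=0}=E\big[\int_0^T\partial_u H^\FF\cdot v_s\,ds\big]$, from which (a) follows by testing against the disturbed strategies of Lemma \ref{DisturbedStrategy} and (b) by substituting $v=u-\hat u$. The only difference is the final localisation in (a): the paper lets $t_2\downarrow t_1$ and uses that the integrand is $\F_s$-measurable to pass to the pointwise inequality, whereas you extend the inequality over $\PF$ --- and there your appeal to ``increasing limits of non-negative combinations'' of rectangle indicators is not quite the right justification (a signed measure nonpositive on a generating $\pi$-system need not be nonpositive on the generated $\sigma$-algebra); the correct repair is to note that the predictable rectangles form a semi-ring, so the inequality passes to the generated ring of finite disjoint unions and then, by measure-theoretic approximation, to all of $\PF$.
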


\begin{proof}
Part $(a)$. Since $\hat u$ is optimal and $J$ is $C^1$, $\hat u$ must be a critical point for $J$, i.e. by Lemma \ref{DerivativeOfJ},
 \begin{align*}
  0&\geq\frac{d}{d\theta}J(\hat u+\theta v)|_{\theta=0}\\
  &= E\Big[\int\limits_0^T\partial_x\hat f_s\cdot Z_s+\partial_y\hat f_s\cdot E[\partial_x\hat\varphi_s\cdot Z_s]+\partial_u\hat f_s\cdot v_sds+\partial_x\hat g_T\cdot Z_T+\partial_y\hat g_T\cdot E[\partial_x\hat\chi_T\cdot Z_T]\Big]\\
  &=E\Big[\int\limits_0^T\partial_x\hat f_s\cdot Z_s+\partial_y\hat f_s\cdot E[\partial_x\hat\varphi_s\cdot Z_s]+\partial_u\hat f_s\cdot v_sds+\hat p_T\cdot Z_T-\hat p_0\cdot Z_0\Big].
 \end{align*}
In the last equality, we used the terminal condition in \eqref{adjointeq}, \eqref{StandardTrickExpectations}, and $Z_0=0$. The product rule together with \eqref{adjointeq} and Lemma \ref{DerivativeProcesswrtuPsiPhiAffine} yields
 \begin{align*}
  0&\geq %E\Big[\int\limits_0^T\partial_x\hat f_s\cdot Z_s+\partial_y\hat f_s\cdot E[\partial_x\hat\varphi_s\cdot Z_s]+\partial_u\hat f_s\cdot v_sds+\int\limits_0^T\hat p_{s-}dZ_s+\int\limits_0^TZ_{s-}d\hat p_s+[\hat p,Z]_T\Big]\\
  E\Big[\int\limits_0^T\partial_x\hat f_s\cdot Z_s+\partial_y\hat f_sE[\partial_x\hat\varphi_s\cdot Z_s]+\partial_u\hat f_s\cdot v_sds+\int\limits_0^T\hat p_{s-}\{\partial_x\hat b_s\cdot Z_{s-}+\partial_y\hat b_sE[Z_s]+\partial_u\hat b_s\cdot v_s\}ds\\
  &\quad-\int\limits_0^TZ_{s-}\Big\{\partial_x\hat f_s+\partial_x\hat b_s\cdot\hat p_{s-}+\partial_x\hat\kappa_s(0)\hat q_s(0)\lambda^B_s+\int\limits_{\R_0}\partial_x\hat\kappa_s(z)\hat q_s(z)\lambda^H_s\nu(dz)+E[\partial_y\hat f_s]\partial_x\hat\varphi_s\\
  &\quad+E[\partial_y\hat b_s\cdot\hat p_{s-}]+E\Big[\partial_y\hat\kappa_s(0)\hat q_s(0)\lambda^B_s+\int\limits_{\R_0}\partial_y\hat\kappa_s(z)\hat q_s(z)\lambda^H_s\nu(dz)\Big]\Big\}ds\\
  &\quad+\int\limits_0^T\Big\{\hat q_s(0)\cdot\Big(\partial_x\hat\kappa_s(0)\cdot Z_{s-}+\partial_y\hat\kappa_s(0)\cdot E[Z_s]+\partial_u\hat\kappa_s(0)\cdot v_s\Big)\lambda^B_s\\
  &\quad+\int\limits_\R\hat q_s(z)\cdot\Big(\partial_x\hat\kappa_s(z)\cdot Z_{s-}+\partial_y\hat\kappa_s(z)\cdot E[Z_s]+\partial_u\hat\kappa_s(z)\cdot v_s\Big)\lambda^H_s\nu(dz)\Big\}ds\Big]\\
  &=E\Big[\int_0^T\Big\{\partial_u\hat f_s\cdot v_s+\hat p_{s-}\partial_u\hat b_s\cdot v_{s}+\hat q_s(0)\partial_u\hat\kappa_s(0)\cdot v_s\lambda^B_s+\int\limits_\R\hat q_s(z)\partial_u\hat\kappa_s(z)\cdot v_s\lambda^H_s\nu(dz)\Big\}ds\Big].
 \end{align*}
 As $v$ and $Z$ are $\FF$-adapted, the only terms that are $\GG$-adapted, are $\hat p$ and $\hat q$. Applying Fubini's theorem, the tower property and again Fubini's theorem we achieve
 \begin{align*}
  0&\geq E\Big[\int_0^T\Big\{\partial_u\hat f_s\cdot v_s+E[\hat p_{s-}|\F_s]\partial_u\hat b_s\cdot v_{s}+E[\hat q_s(0)|\F_s]\partial_u\hat\kappa_s(0)\cdot v_s\lambda^B_s\\
  &\quad+\int\limits_\R E[\hat q_s(z)|\F_s]\partial_u\hat\kappa_s(z)\cdot v_s\lambda^H_s\nu(dz)\Big\}ds\Big]\\
  &=E\Big[\int_0^T\partial_uH^\FF(s,\lambda_s,X^{\hat u}_{s-},E[\varphi(X^{\hat u}_s)],E[X^{\hat u}_s],\hat u_s,\hat p_{s-},\hat q_s)\cdot v_sds\Big].
 \end{align*}
 By Lemma \ref{DisturbedStrategy}, the strategy $u$ given by $$u_t(\omega)=\hat u_t(\omega)\ind_{[0,t_1]\cup(t_2,T]}(t)+v\ind_{S\times(t_1,t_2]}(\omega,t)+\hat u_t(\omega)\ind_{S^c\times(t_1,t_2]}(\omega,t)$$
 is admissible for every $v\in U$, $S\in\F_{t_1}$ and all $t_1<t_2\in[0,T]$. Defining $v_t:=u_t-\hat u_t$, the convexity of $\A$ implies that also $\hat u_t+\theta v_t\in\A$ for all $\theta\in[0,1]$. Thus, $v_t$ satisfies the conditions above and we get for all $t_1<t_2\in[0,T]$
 \begin{align*}
  0&\geq E\Big[\int_{t_1}^{t_2}\ind_S\partial_uH^\FF(s,\lambda_s,X^{\hat u}_{s-},E[\varphi(X^{\hat u}_s)],E[X^{\hat u}_s],\hat u_s,\hat p_{s-},\hat q_s)\cdot(v-\hat u_s)ds\Big].
 \end{align*}
 Letting $t_2\downarrow t_1$, this implies that $$E[\ind_S\partial_uH^\FF(s,\lambda_s,X^{\hat u}_{s-},E[\varphi(X^{\hat u}_s)],E[X^{\hat u}_s],\hat u_s,\hat p_{s-},\hat q_s)\cdot(v-\hat u_s)]\leq 0$$ for all $S\in\F_s$ a.e. $s\in[0,T]$ and $(a)$ is proved.
 
 \vspace{2mm}
Part $(b)$. Let now $\partial_uH^\FF(s,\lambda_s,X^{\hat u}_{s-},E[\varphi(X^{\hat u}_s)],E[X^{\hat u}_s],\hat u_s,\hat p_{s-},\hat q_s)=0$. Then, putting $v_t:=u_t-\hat u_t$, it directly follows from the computations in $(a)$ that, for all $u\in\A$, $$\frac{d}{d\theta}J(\hat u+\theta(u-\hat u))|_{\theta=0}=0.$$ 
\end{proof}

%%%%%%%%%%%%%%%%%%%%%%%%%%%%%%%%%%%%%%%%%%%%%%%%
\subsection{Example: A centralised control in an economy of specialised sectors}
Consider an economy of $N$ specialised sectors, all of them with more or less the same debt and
having comparably sized volumes (as a motivation for this assumption, see Ricardo's theory of comparative advantages). 
We identify one sector with one (leading) agent in the economy.
Each agent $i$ can sell bonds at a rate $(r^i_t)_{t\in[0,T]}$, whose dynamics follows a generalised Vasicek model:
\begin{align}
 d r^i_t&=\theta_t\{(\bar r_t-u_t)-r^i_t\}dt+\int\limits_\R\sigma_t(z)\mu^i(dt,dz),\, j=1,\cdots,\,N\label{Vasicek}\\
 \bar r_t&:=\frac{1}{\sqrt{N}}\sum_{j=1}^N r^j_t.\nonumber
\end{align}
The martingale random fields $\mu^i(dt,dz)$ are iid, as the sectors are specialised, each one on a different industry, but the volatilities are the same, as we assume equal economic strength. The process $(\theta_t)_{t\in[0,T]}$ is assumed to be positive, predictable and bounded by some constant $K$. We also assume that $\lVert\sigma_s(\cdot)\rVert_{\lambda_s}<K$. The term $u_t\geq0$ is a control term that models the influence of the central bank regulating this economy. The central bank can buy a basket of bonds (in this example equally weighted) in order to lower the average interest rate. So the term $\bar r_t-u_t$ models the target average rate at time $t$. By linearity and the independent noises in \eqref{Vasicek} and a propagation of chaos argument, we have that, as $N\rightarrow\infty$, each state's dynamics behaves like:
\begin{align}\label{MeanFieldVasicek}
 d r_t&=\theta\{(E[r_t]-u_t)-r_t\}dt+\int\limits_\R\sigma_t(z)\mu(dt,dz),
\end{align}
for a martingale random measure $\mu\stackrel{d}{=}\mu^1$. The main focus of the central bank is  on keeping the currency stable and it is necessary to keep $u_t$ small over time, while still promoting liquidity in the economy.Then the central bank faces the following optimisation problem
\begin{align}
 J(\hat u)&=\max_{u}J(u) \notag\\
  J(u)&=E\Big[\int\limits_0^T-(u_t)^2-(E[r_t])^2-(r_t)^2dt\Big]. \label{ObjectiveInterest RateExample}
\end{align}
In the notation of the previous sections, the functions $b$, $\kappa$, $f$ and $g$ are given by
\begin{align*}
 b(t,\lambda_t,x,y,u)&=\theta_t(-x+y-u)\\
 \kappa(t,z,\lambda_t,x,y,u)&=\sigma_t(z)\\
 f(t,\lambda_t,x,y,u)&= -x^2-y^2-u^2\\
 g(x,y)&=0
\end{align*}
We can solve this optimisation problem explicitly, as it is quadratic (see \cite{CDL} for the classical Brownian case) applying the results before.
First we need to check Assumptions \ref{MainAssumptionsForwardDynamics}, \ref{MainAssumptions}, and Definition \ref{Admissibility}.

Concerning assumption $(E1')$, defining $b_0(s,\lambda_s,\omega)=\theta_s(\omega)$ and $b_1(s,\lambda,x,y,u)=-x+y-u$ and $\kappa_0(s,z,\lambda_s,\omega)=\sigma_s(z,\omega)$, $\kappa_1(s,z,\lambda,x,y,u)=1$, we have $b(s,\lambda_s,x,y,u)=b_0(s,\lambda_s)b_1(s,\lambda_s,x,y,u)$ and $\kappa(s,\lambda_s,x,y,u)=\kappa_0(s,z,\lambda_s)\kappa_1(s,z,\lambda_s,x,y,u)$. Since $|\partial_ib|=|\theta_s|\leq K$, and  $\lVert\sigma_s(\cdot)\rVert_{\lambda_s}<K$ does not depend on $(x,y,u)$, $(E2')$ holds. $(E3')$ is equally simple. About the conditions on $f$, $g$, $\varphi=\id$ and $\chi=0$, note that $f$ is obviously $C^1$ in $(x,y,u)$, $g$ is obviously concave, $\varphi$ and $\chi$ are affine and thus $\partial_x\varphi$, $\partial_x\chi$ are Lipschitz. This proves that conditions $(O1)-(O5)$ are all satisfied. About condition $(A1)$, we know that $(\omega,s)\mapsto X^u_{s-}(\omega)\in L^2([0,T]\times \Omega)$ since we only allow for solutions of the mean-field SDE in $\Ss^\FF_2$ and, by Theorem \ref{ExistenceAndUniquenessmfSDE}, there is exactly one such solution. 
For $u\in\Hh^\FF$, Jensen's inequality implies that
\begin{align*}
s\mapsto f(s,\lambda_s,X^u_{s-},E[X^u_s],u_s)=-|X^u_{s-}|^2-|E[X^u_s]|^2-|u_s|^2\in L^2(\Omega\times[0,T]),
\end{align*}
which proves $(A1)$. Condition $(A2)$ is proved similarly.

\vspace{2mm}
We first apply the necessary maximum principle Theorem \ref{NecessaryMaxPrinciple}). Note that the additional conditions $(O6)$ and $(O7)$ are obviously satisfied and the $\FF$-Hamiltonian 
\begin{align*}
 H^\FF(t,\lambda_t,x,y_1,y_2,u,\hat p_{t-},\hat q_t)&=-x^2-y_1^2-u^2+\theta_t(-x+y_2-u)E[\hat p_{t-}|\F_t]\\
 &+\sigma(t,0)E[\hat q_t(0)|\F_t]\lambda^B_t +\int\limits_{\R_0}\sigma(t,z)E[\hat q_t(z)|\F_t]\lambda^H_t\nu(dz).
\end{align*}
Then we get the following candidate for the optimal control
$$
 \hat u_t=-\frac{\theta_t}{2}E[\hat p_{t-}|\Ff_t].
$$
Notice that it makes sense to restrict only to nonnegative values, as short selling is not allowed by the central bank. For $\hat u_t:=-\frac{\theta_t}{2}E[\hat p_{t-}|\Ff_t]>0$, we get
\[\begin{split}
H^\FF(t,\lambda_t,x,y_1,y_2,\hat u_t,\hat p_{t-},\hat q_t)&=h(t,\lambda_t,x,y_1,y_2)\\
&=-(x^2+y_1^2)+\theta_t(y_2-x)-\frac{|\theta_t|^2}{4}|E[\hat p_{t-}|\Ff_t]|^2
\\
 &+\sigma(t,0)E[\hat q_t(0)|\F_t]\lambda^B_t +\int\limits_{\R_0}\sigma(t,z)E[\hat q_t(z)|\F_t]\lambda^H_t\nu(dz),
\end{split}\]
which is a concave function in $(x,y_1,y_2)$. 
Then $\hat u$ is optimal by the sufficient maximum principle Theorem \ref{SufficientMaxPrinciple}.
The boundary $u=0$ must be checked separately.

\section{Appendix}
{\bf Proof that the mapping $\Psi$ in \eqref{psi} is a contraction.}
Fix $\beta>0$. We define the norm $\lVert\cdot\rVert_{\beta}$ on $ L^2_{ad}(\GG) \times \Ii$ by
\begin{align*}
 \lVert(Y,Z)\rVert_{\beta}:=\Big(E\Big[\int\limits_0^Te^{\beta s}(|Y_s|^2+\lVert Z_s\rVert^2_{\lambda_s})ds\Big]\Big)^{\frac{1}{2}}   %\in(\lVert(Y,Z)\rVert_{\Ss^\GG_2\times\Ii},e^{\beta T}\lVert(Y,Z)\rVert_{\Ss^\GG_2\times\Ii}),
\end{align*}
which is equivalent to the canonical one. 
%given by
%\begin{align*}
 %\lVert(Y,Z)\rVert_{\Ss^\GG_2\times\Ii}:=\Big(E\Big[\int\limits_0^T|Y_s|^2+\lVert Z_s\rVert^2_{\lambda_s}ds\Big]\Big)^{\frac{1}{2}}.
%\end{align*}
%Hereafter we show that $\Psi$ is a contraction, 
Let $(y^{(1)},z^{(1)}),\,(y^{(2)},z^{(2)})\in L^2_{ad}(\GG) \times\Ii$ be two given inputs and define $(Y^{(1)},Z^{(1)}):=\Psi(y^{(1)},z^{(1)})$, $(Y^{(2)},Z^{(2)}):=\Psi(y^{(2)},z^{(2)})$, which are indeed the corresponding solutions of \eqref{RelaxedmfBSDE}. Furthermore, define
\begin{align*}
 \hat Y:=Y^{(1)}-Y^{(2)},\,\hat y:=y^{(1)}-y^{(2)},\,\hat Z:=Z^{(1)}-Z^{(2)},\,\hat z:=z^{(1)}-z^{(2)}.
\end{align*}
Then $(\hat Y,\hat Z)$ satisfies the BSDE
\begin{align*}
 \begin{cases}
  d\hat Y_t&=E'\Big[h\Big(t,\lambda_t,\lambda'_t,Y^{(1)}_t,(y^{(1)}_t)',Z^{(1)}_t,(z^{(1)}_t)'\Big)-h\Big(t,\lambda_t,\lambda'_t,Y^{(2)}_t,(y^{(2)}_t)',Z^{(2)}_t,(z^{(2)}_t)'\Big)\Big]dt\\
  &\quad+\int_\R Z^{(1)}_t(z)-Z^{(2)}_t(z)\mu(dt,dz)\\
  \hat Y_T&=F-F=0.
 \end{cases}
\end{align*}
The application of Ito's formula on $e^{\beta s}|\hat Y_s|^2$ yields
\begin{align*}
 &0\geq E\Big[e^{\beta\cdot T}|\hat Y_T|^2-e^{\beta\cdot0}|\hat Y_0|^2\Big]=E\Big[\int\limits_0^T\beta e^{\beta s}|\hat Y_{s-}|^2ds+\int\limits_0^T\int\limits_\R2e^{\beta s}\hat Y_{s-}\hat Z_s(\xi)\mu(ds,d\xi)
\\
 &\quad+\int\limits_0^T2e^{\beta s}\hat Y_{s-}E'\Big[h\Big(s,\lambda_s,\lambda'_s,Y^{(1)}_s,(y^{(1)}_s)',Z^{(1)}_s,(z^{(1)}_s)'\Big)-h\Big(s,\lambda_s,\lambda'_s,Y^{(2)}_s,(y^{(2)}_s)',Z^{(2)}_s,(z^{(2)}_s)'\Big)\Big]ds\\
 &\quad+\frac{1}{2}\int\limits_0^T2e^{\beta s}|\hat Z_s(0)|^2\lambda^B_sds\quad+\int\limits_0^T\int\limits_{\R_0}
 \{
 e^{\beta s}(|\hat Y_{s-}+\hat Z_s(\xi)|^2-|\hat Y_{s-}|^2)-2e^{\beta s}\hat Y_{s-}\hat Z_s(\xi)
 %\underbrace{e^{\beta s}(|\hat Y_{s-}+\hat Z_s(\xi)|^2-|\hat Y_{s-}|^2)-2e^{\beta s}\hat Y_{s-}\hat Z_s(\xi)}_{=e^{\beta s}|Z_s(\xi)|^2}
 \}
 \lambda^B_s\nu(d\xi)ds\Big]
\end{align*}
Since $Z^{(1)},\,Z^{(2)}\in\mathcal I$, then the process $M_t:=\int_0^t\int_\R \hat Z_s(z)\mu(ds,dz)$ is a martingale. Since the filtration $\GG$ is right continuous (see \cite[Lemma 2.4]{DiS}), Doob's Regularization Theorem (see, e.g. \cite[Theorem 6.27]{K}) implies that $M$ has a c\`adl\`ag version and, being the integral w.r.t. $ds$ continuous, we conclude that $Y$ has a c\`adl\`ag version. 
Hence the c\`adl\`ag version of $Y$ has only countably many discontinuities, we can replace the $\hat Y_{s-}$ by $\hat Y_{s}$ in the integrals w.r.t. $ds$. Rearranging terms and the Lipschitzianity of $h$, given by $(C3)$ yields
\begin{align*}
 &E\Big[\int\limits_0^T\beta e^{\beta s}|\hat Y_{s}|^2ds+\int\limits_0^Te^{\beta s}\lVert\hat Z_{s}\rVert_{\lambda_s}^2ds\Big]\\
 &\leq-E\Big[\int\limits_0^T2e^{\beta s}\hat Y_{s}E'\Big[h\Big(s,\lambda_s,\lambda'_s,Y^{(1)}_s,(y^{(1)}_s)',Z^{(1)}_s,(z^{(1)}_s)'\Big)-h\Big(s,\lambda_s,\lambda'_s,Y^{(2)}_s,(y^{(2)}_s)',Z^{(2)}_s,(z^{(2)}_s)'\Big)\Big]ds\Big]\\
 &\leq E\Big[\int\limits_0^T2e^{\beta s}|\hat Y_{s}|E'\Big[\Big|h\Big(s,\lambda_s,\lambda'_s,Y^{(1)}_s,(y^{(1)}_s)',Z^{(1)}_s,(z^{(1)}_s)'\Big)-h\Big(s,\lambda_s,\lambda'_s,Y^{(2)}_s,(y^{(2)}_s)',Z^{(2)}_s,(z^{(2)}_s)'\Big)\Big|\Big]ds\Big]
 \end{align*}
 \begin{align*}
 \hspace{2cm}
 &\leq E\Big[\int\limits_0^T2Ke^{\beta s}|\hat Y_{s}|E'\Big[|Y^{(1)}_s-Y^{(2)}_s|+|(y^{(1)}_s)'-(y^{(2)}_s)'|+\lVert Z^{(1)}_s-Z^{(2)}_s\rVert_{\lambda_s}+\lVert (z^{(1)}_s)'-(z^{(2)}_s)'\rVert_{\lambda'_s}\Big]ds\Big]
\end{align*}
By the definition of the operator $E'$, we have 
%the relations \ref{EprimeX} and \ref{EprimeXprime}, it holds
\begin{align*}
 &E'[|Y^{(1)}_s-Y^{(2)}_s|]=|Y^{(1)}_s-Y^{(2)}_s|=|\hat Y_s|\\
 &E'[|(y^{(1)}_s)'-(y^{(2)}_s)'|]=E[|y^{(1)}_s-y^{(2)}_s|]=E[|\hat y_s|]\\
 &E'[\lVert Z^{(1)}_s-Z^{(2)}_s\rVert_{\lambda_s}]=\lVert Z^{(1)}_s-Z^{(2)}_s\rVert_{\lambda_s}=\lVert \hat Z_s\rVert_{\lambda_s}\\
 &E'[\lVert (z^{(1)}_s)'-(z^{(2)}_s)'\rVert_{\lambda_s}]=E[\lVert z^{(1)}_s-z^{(2)}_s\rVert_{\lambda_s}]=E[\lVert \hat z_s\rVert_{\lambda_s}].
\end{align*}
Making use of the fact that $2ab\leq ka^2+\frac{1}{k}b^2$ for all $a,b\in\R$ and all $k>0$, and choosing $k:=16K$, $a:=|\hat Y_s|$, $b=(|\hat Y_s|+E[|\hat y_s|]+\lVert \hat Z_s\rVert_{\lambda_s}+E[\lVert \hat z_s\rVert_{\lambda_s}])$, we get
\begin{align*}
 &E\Big[\int\limits_0^T\beta e^{\beta s}|\hat Y_{s}|^2ds+\int\limits_0^Te^{\beta s}\lVert\hat Z_{s}\rVert_{\lambda_s}^2ds\Big]\\
 &\leq 16K^2E\Big[\int\limits_0^Te^{\beta s}|\hat Y_{s}|^2ds\Big]+\frac{1}{16}E\Big[\int\limits_0^Te^{\beta s}(|\hat Y_s|+E[|\hat y_s|]+\lVert \hat Z_s\rVert_{\lambda_s}+E[\lVert \hat z_s\rVert_{\lambda_s}])^2ds\Big]\\
 &\leq 16K^2E\Big[\int\limits_0^Te^{\beta s}|\hat Y_{s}|^2ds\Big]+\frac{1}{4}E\Big[\int\limits_0^Te^{\beta s}|\hat Y_s|^2ds\Big]+\frac{1}{4}E\Big[\int\limits_0^Te^{\beta s}|\hat y_s|^2ds\Big]
\\
 \hspace{1cm}
 &\quad+\frac{1}{4}E\Big[\int\limits_0^Te^{\beta s}\lVert \hat Z_s\rVert_{\lambda_s}^2ds\Big]+\frac{1}{4}E\Big[\int\limits_0^Te^{\beta s}\lVert \hat z_s\rVert_{\lambda_s}^2ds\Big],
\end{align*}
where we also used that $(\sum_{i=1}^na_i)^2\leq n\sum_{i=1}^na_i^2$ and $E[X]^2\leq E[X^2]$. This yields
\begin{align*}
 &(\beta-16K^2-\frac{1}{4})E\Big[\int\limits_0^Te^{\beta s}|\hat Y_{s}|^2ds\Big]+\frac{3}{4}E\Big[\int\limits_0^Te^{\beta s}\lVert\hat Z_{s}\rVert_{\lambda_s}^2ds\Big]\leq\frac{1}{4}E\Big[\int\limits_0^Te^{\beta s}(|\hat y_s|^2+\lVert \hat z_s\rVert_{\lambda_s}^2)ds\Big]
\end{align*}
Choosing $\beta=16K^2+1>0$, we finally get
\begin{align*}
 \lVert(\hat Y,\hat Z)\rVert_{\beta}=E\Big[\int\limits_0^Te^{\beta s}(|\hat Y_{s}|^2ds+\lVert\hat Z_{s}\rVert_{\lambda_s}^2)ds\Big]\leq\frac{1}{3}E\Big[\int\limits_0^Te^{\beta s}(|\hat y_s|^2+\lVert \hat z_s\rVert_{\lambda_s}^2)ds\Big]=\frac{1}{3}\lVert(\hat y,\hat z)\rVert_{\beta}.
\end{align*}
By this we see that $\Psi$ is a contraction.

\bigskip
\begin{center}
{\bf Acknowledgements}
\end{center}
Financial support from the Norwegian Research Council of the ISP project 239019 "FINance, INsurance, Energy, Weather and STOCHastics"  (FINEWSTOCH) is greatly acknowledged.

%%%%%%%%%%%%%%
%%%%%%%%%%%%%%
%%%%%%%%%%%%%%
%%%%%%%%%%%%%%

\end{document}